 \numberwithin{equation}{section}
 \newtheorem{theorem}{Theorem}[section]
 \newtheorem{remark}[theorem]{Remark}
 \newtheorem{lemma}[theorem]{Lemma}
 \newtheorem{proposition}[theorem]{Proposition}
 \newtheorem{corollary}[theorem]{Corollary}
\begin{document}

\title[ Schr\"odinger equations with potentials vanishing at infinity]
{Pohozaev manifold constraint for solving nonlinear Schr\"odinger equations with potentials vanishing at infinity}

\author[L.\ A.\ Maia, R. Ruviaro]{{Liliane de Almeida Maia*,  Ricardo Ruviaro}\\
{\tiny Departamento de Matem\'atica, UnB, 70910-900 Bras\'ilia - DF, Brazil. }\\
{\tiny \texttt{E-mail:  lilimaia.unb@gmail.com and ruviaro@mat.unb.br}}\vspace{.5cm}\\
}

\author[G. S. Pina]{{Gilberto da Silva Pina}\\
{\tiny Universidade Federal do Recôncavo da Bahia, UFRB, 44.380-000 Cruz das Almas - BA, Brazil}\\
{\tiny \texttt{E-mail: gilbertospfc@ufrb.edu.br}}
}
\thanks{{* Corresponding author}\\
The first and second authors were partially supported by FAPDF, CNPq and CAPES, Brazil. }
\maketitle

\begin{abstract}
Existence of a positive solution for a class of 
nonlinear Schr\"odinger equations with potentials which decay to zero at infinity, with an appropriate rate, approaching zero mass type limit scalar field equations, is 
established via a new composition of two translated and dilated solitons and its projection on the so called Pohozaev manifold.
\end{abstract} 

\medskip \noindent {\small {\textbf{MSC 2010 subject classification:} 35J10, 35J20, 35Q40, 35Q55.}} \\\smallskip \noindent {\small {\textbf{%
Keywords:} Nonlinear Schr\"odinger, Pohozaev manifold, Constrained minimization, Barycenter.}}

\section{Introduction}

This work deals with the existence of a positive solution for the problem 
\begin{equation}\label{1.1}
	-\Delta u+V(x)u=f(u), \qquad u \in \mathcal{D}^{1,2}(\mathbb{R}^N), \ N\geq 3,
\end{equation}
with a potential $V$ vanishing at infinity, possibly changing sign, and a nonlinearity $f$ under very mild hypotheses, asymptotically linear or superlinear and subcritical at infinity, not sa-\ tisfying any monotonicity condition.
Our goal is to investigate whether a positive ground state solution exists and, if not, to find a positive bound state, trying to loosen the assumptions found in the literature, either in the potential or in the nonlinearity \cite{amma, Azz-Pom, balio, Beres-Lions, Je-Tan}. We avoid, for instance, to apply the spectral theory approach or the so called Nehari manifold constrained approach. Under this scope, it is reasonable to look for solutions of equation \eqref{1.1} constrained to a subset of functions which satisfy Pohozaev identity \cite{pohozaev}, since all solutions do so. Moreover, combining two copies of translated and dilated positive soliton solutions of the limit zero mass scalar field equation, projecting their sum onto the so called Pohozaev manifold and studying their interaction, we are able to find a positive bound state solution in case a ground state solution does not exist. To the best of our knowledge, this is the first time that such a construction is put up when searching for a solution of a differential equation. This new approach allows us to tackle a model problem like
\[
-\Delta u+\frac{1}{(1+|x|)^k}u=\frac{2\, u^{11}-4{\sqrt 2} \, u^9+4 \, u^7}{u^{10}+1}, \quad u>0,\qquad u \in \mathcal{D}^{1,2}(\mathbb{R}^3),
\]
where $k>2$ and $f(s):=(2s^{11}-4{\sqrt 2}s^9+4s^7)/(s^{10}+1)$ is asymptotically linear at infinity, but is such that $f(s)/s$ is not increasing for $s > 0$, for instance. Likewise, $f(s)= s^7(1 - sin(s))/(1+s^4)$, for $s>0$, in $\mathbb{R}^3$ is super linear and subcritical at infinity and satisfies mild hypotheses but no monotonicity condition on $f(s)/s$.
The seminal works of Bahri and Li \cite{bali} and Cerami and Passaseo \cite{cepa95} presented constructions of bound state solutions, whenever the minimal action of the associated functional is not attained. They succeeded by building a convex combination of two soliton positive solutions of a limit problem (bumps) and projecting on the sphere of radius one in an $L^p$ space, for a pure power nonlinearity $f(s)= s^{p-1}$, with $2 < p <2^*$. Their method was applied in many works that followed and in different scenarios, but it would de hard to list them all; we would refer to \cite{Ce} and references therein.
More recently, a similar approach was developed to construct bound state solutions by using projections of convex combinations of two positive bumps on the Nehari manifold, see \cite{ Clapp-Maia, evequoz-weth,  Khatib-Maia, MaPe} and their references. The limitation, in this case, is having to assume some monotonicity on $f(s)/s$.
The novelty of our arguments, which allows to use the Pohozaev manifold as a natural constraint, is that we are able to prove in Lemma \eqref{lema4.13} that any bounded Palais Smale sequence of the associated functional restricted to this manifold is a Palais Smale sequence for the functional in the whole space (free). This has been a core issue in many previous works which applied similar constraints and required sofisticated arguments \cite{Bartsch-Soave, Bartsch-Soave2, Lehrer-Maia, Maia-Ruviaro, Mederski}. We think that our approach is somehow simpler and could be used in a large range of problems.
\\
Let $S$ be the best constant of Gagliardo-Nirenberg-Sobolev inequality 
\begin{equation}\label{desigual}
S\!\left(\displaystyle \int_{\mathbb{R}^N}\!|u|^{2^*}dx\right)^{\!\!2/2^*} \!\leq \displaystyle \int_{\mathbb{R}^N}\!|\nabla u|^2dx
\end{equation}
 for all $u \in \mathcal{D}^{1,2}(\mathbb{R}^N)$, with $2^*:=2N/(N-2)$. We will assume the following conditions on the potential $V$:\\
($V_1$)\ $V \in C^2(\mathbb{R}^N)$ and $\displaystyle\int_{\mathbb{R}^N}|V^-|^{N/2} < S^{N/2}$, where $V^-(x):=\min\{0,V(x)\}$;\\
 ($V_2$) \ There exist constants $A_0,A_1>0$ and $k\in\mathbb{R}$, $ k>\max\{2, N-2\}$ such that $$|V(x)| \leq A_0(1+|x|)^{-k} \quad \textrm{and} \quad |\nabla V(x) \cdot x| \leq A_1(1+|x|)^{-k},$$ for all $x \in \mathbb{R}^N$;\\
($V_3$) \ $\displaystyle\int_{\mathbb{R}^N}|W^+|^{N/2} < S^{N/2}$, where $W^+(x):=\max\{0,\nabla V(x) \cdot x\}$;\\
($V_4$) \ $\displaystyle\int_{\mathbb{R}^N}|Z^-|^{N/2} < \left(\frac{S}{2^*}\right)^{\!N/2}$, where $Z^-(x):=\min\!\left\{0,\dfrac{\nabla V(x) \cdot x}{N} + V(x)\right\}$;\\
($V_5$) \ $xH(x)x \in L^{N/2}(\mathbb{R}^N)$, $\displaystyle\lim_{|x| \to \infty}xH(x)x = 0$ and $\displaystyle\int_{\mathbb{R}^N}|K^+|^{N/2} < \left(\frac{2S}{2^*}\right)^{\!\!N/2}$, where $H$ denotes the Hessian matrix of $V$ and $K^+(x):=\max\!\left\{0,\nabla V(x) \cdot x + \dfrac{xH(x)x}{N}\right\}$.
\\
Moreover, considering  $F(s)=\int_{0}^{s}f(t)dt,$ we will assume the following hypotheses on the function $f$: \\
\vspace{.2cm}
($f_1$) \ $f \in C^1([0,\infty))\cap C^3((0, \infty))$, $f(s) \geq 0$ and there exists a constant $A_2>0$ such that $$\left|f^{(i)}(s)\right| \leq A_2|s|^{2^*-(i+1)},$$ 
\vspace{.2cm}where $f^{(-1)}:=F$ and $f^{(i)}$ is the $i-$th derivative of $f$, $i=0,1,2,3$;\\ 
\vspace{.2cm}
 ($f_2$) \ $\displaystyle\lim_{s \to 0^+}\dfrac{f(s)}{s^{2^*-1}} = \displaystyle\lim_{s \to +\infty}\dfrac{f(s)}{s^{2^*-1}} = 0$ \ and \ $\displaystyle\lim_{s \to +\infty}\dfrac{f(s)}{s} \geq \ell$, with $\ell \in \mathbb{R}^+$;\\ 
 \vspace{.2cm}

 ($f_3$) \ The function $$g(s):={sf'(s)}/{f(s)}$$ is non-increasing  on $(0,\gamma)$, where the constant $\gamma >0$ is defined by $\gamma=\min \{s>0; f(s)=0\}$,
 ($\gamma= \infty$ if $f(s)>0$ for all $s>0$) and $\displaystyle \lim_{s \to +\infty}g(s)<2^*-1<\displaystyle \lim_{s \to 0}g(s)$.
 
Note that $F(0)=0$ and by $(f_1)$, $(f_2)$, $F(s) \geq 0$ for $s>0$. 

Under assumptions  ($f_1$) and ($f_2$), the limit problem at infinity
\begin{equation}\label{P_0}
-\Delta u=f(u),\qquad u \in \mathcal{D}^{1,2}(\mathbb{R}^{N}),\tag{$\wp
	_0$}
\end{equation}
has a ground state solution $w$ which is positive, radially symmetric and decreasing in the radial direction, see \cite{Beres-Lions} and \cite{Mederski}.\\
Flucher in \cite[Theorem 6.5]{Flucher} and more recently V\'etois in \cite{Vetois} have shown that under $(f_1)$ there exist constants $A_4,A_5,A_6>0$ such that 
\begin{equation}\label{3.1}
A_4(1+|x|)^{-(N-2)} \leq w(x) \leq A_5(1+|x|)^{-(N-2)},
\end{equation}
\begin{equation}\label{3.2}
|\nabla w(x)| \leq A_6(1+|x|)^{-(N-1)}.
\end{equation}
A radial solution with decay \eqref{3.1} is called a fast decay solution of equation \eqref{P_0}. It is shown in \cite[Theorem 2]{Tang} and \cite[Chapter 6]{Flucher} that, in this setting with $(f_1)$, $(f_2)$ and $(f_3)$, the fast decay positive solution $w$ is unique. Moreover, Tang in \cite{Tang} obtained that $\Vert w \Vert_\infty < \gamma$. Any other hypothesis which could guarantee uniqueness of the ground state solution would suffice and possibly replace $(f_3)$. But in general this is a delicate and difficult issue. We note that the nonlinearities of the examples presented before satisfy this condition.

Now we can state our main result of existence of a solution.
\begin{theorem}\label{teo1.1}
	Assume that ($V_1$)--($V_5$) and ($f_1$)--($f_3$) hold true. Then, problem $(\ref{1.1})$ has a positive solution $u \in \mathcal{D}^{1,2}(\mathbb{R}^N)$.
\end{theorem}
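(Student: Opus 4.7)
The plan is to find the solution as a critical point of the energy functional
$$I(u) = \frac{1}{2}\int_{\mathbb{R}^N}|\nabla u|^2\,\dx + \frac{1}{2}\int_{\mathbb{R}^N}V(x)u^2\,\dx - \int_{\mathbb{R}^N}F(u)\,\dx$$
constrained to the Pohozaev manifold
$$\mathcal{P} = \bigl\{u \in \mathcal{D}^{1,2}(\mathbb{R}^N)\setminus\{0\} : P(u)=0\bigr\},$$
where $P(u)=\frac{N-2}{2}\int|\nabla u|^2 + \frac{1}{2}\int(NV+\nabla V\cdot x)u^2 - N\int F(u)$ arises from testing \eqref{1.1} against $x\cdot\nabla u$. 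Using $(V_1)$--$(V_5)$ and $(f_1)$--$(f_2)$, I would first verify that every non-trivial non-negative $u$ admits a unique dilation $u_t(x)=u(x/t)$ landing on $\mathcal{P}$, that $\mathcal{P}$ is a $C^1$ natural constraint on which $I$ is coercive and bounded below, and that $c := \inf_{\mathcal{P}} I > 0$; the smallness hypotheses $(V_3)$--$(V_5)$ are precisely what is needed to control the sign of $\partial_t P(u_t)$ at the projection and to keep $t(u)$ in a compact subset of $(0,\infty)$. The limit equation \eqref{P_0} has its own Pohozaev manifold $\mathcal{P}_\infty$ on which the positive, radial, fast decay ground state $w$, unique by $(f_3)$, realises $m_0 := I_\infty(w)$.

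The argument then splits according to the comparison between $c$ and $m_0$. If $c < m_0$, a concentration--compactness splitting of a minimising sequence on $\mathcal{P}$ rules out vanishing and loss of mass at infinity (which would produce energy $\ge m_0$), so the infimum is attained at some $\bar u \in \mathcal{P}$. The key Lemma \ref{lema4.13} then promotes $\bar u$ to a free critical point of $I$, and positivity follows by replacing $u$ with $|u|$ (extending $f\equiv 0$ on $(-\infty,0]$, allowed since $f\ge 0$) and applying the strong maximum principle. If instead $c \ge m_0$, I would construct a bound state through a two-bump minimax in the spirit of \cite{bali, cepa95}: for $R\gg 1$ form a test function $\phi_R$ by superposing two translated copies of $w$ separated by distance of order $R$, project each such $\phi_R$ onto $\mathcal{P}$ by its unique dilation, and deploy the barycenter map $\beta : \mathcal{D}^{1,2}\setminus\{0\}\to \mathbb{R}^N$ on a suitable parameter family to produce, via a degree / linking argument, a topological min-max level $c_\ast$. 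Lemma \ref{lema4.13} converts the resulting constrained Palais--Smale sequence into a free one, concentration--compactness yields a non-trivial weak limit $u\ge 0$, and the strong maximum principle delivers positivity.

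The decisive step, and the main difficulty, is the strict inequality $c_\ast < 2m_0$, which rules out the only remaining loss of compactness: two bumps escaping to infinity with combined energy $2m_0$. To establish it I would carry out a quantitative expansion of $I$ on the projected two-bump family using: the sharp decay \eqref{3.1}--\eqref{3.2} of $w$ and $\nabla w$; the radial symmetry and uniqueness of $w$ from $(f_3)$, which control the sign of the leading interaction; and the hypothesis $k>N-2$ in $(V_2)$, which forces the perturbative contributions $\int V\phi_R^2$ and $\int(\nabla V\cdot x)\phi_R^2$ to decay strictly faster than $R^{-(N-2)}$. At the same time the cross-term $\int[F(w(\cdot-y_1)+w(\cdot-y_2)) - F(w(\cdot-y_1)) - F(w(\cdot-y_2))]$ generates a strictly positive contribution of leading order $R^{-(N-2)}$, coming from the tails of $w$ and the superadditivity of $F$ on the relevant regime, and it dominates the potential perturbation for $R$ large. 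Combined with the non-degeneracy of the projection guaranteed by $(V_4)$--$(V_5)$, this expansion produces $c_\ast < 2m_0$, compactness is restored, and Theorem \ref{teo1.1} follows.
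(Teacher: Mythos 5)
Your outline captures the correct high-level strategy and correctly identifies the roles of the Pohozaev constraint, Lemma~\ref{lema4.13} (promotion of constrained to free Palais--Smale sequences), the dichotomy between attained/not-attained ground state level, the barycenter map, and the need for a strict energy gap below $2p_0$. The interaction estimates you sketch (leading interaction $\sim R^{-(N-2)}$ from the tails of $w$, potential perturbation decaying strictly faster since $k > N-2$) also match the paper's Lemmas~\ref{varep_m}--\ref{gradw_0.gradw_y} and~\ref{lema11}.

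However, there is a real gap in the central minimax construction. You propose to superpose two \emph{translated} copies of $w$ and project by dilation, then apply a degree argument ``on a suitable parameter family.'' The paper's key innovation, emphasized in the abstract, is that the two bumps are both translated \emph{and dilated} by complementary factors: $U^R_{y,\lambda} = w\big(\tfrac{\cdot - Ry_0}{\lambda}\big) + w\big(\tfrac{\cdot - Ry}{1-\lambda}\big)$, with $\lambda\in[0,1]$ and $y\in\partial B_2(y_0)$. This is not a cosmetic variant. The pair $(\lambda,y)$ parametrizes the closed ball $B_2(y_0)$ via $\lambda y_0 + (1-\lambda)y$, and when $\lambda\in\{0,1\}$ the sum degenerates to a \emph{single} translated bump $w(\cdot - Ry)$ or $w(\cdot - Ry_0)$, with controlled barycenter $Ry$ after projection. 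That collapse at the boundary is exactly what makes the map $h:B_2(y_0)\to\partial B_2(y_0)$ built from $\beta\circ\pi\circ\zeta$ restrict to the identity on $\partial B_2(y_0)$ and hence contradict Brouwer's theorem. A family of two equally-sized translated bumps does not collapse to a one-bump state at any boundary of a ball of positions, so it is not clear how you would set up the retraction; one would need a different topological argument, and your sketch does not supply it. Moreover, the dilation split is also used quantitatively: Proposition~\ref{prop.princ} treats $\lambda$ near $1/2$ (where the projection parameter $S^R_{y,1/2}\to 2$ by Lemma~\ref{lambda=1/2} and the interaction term is the dominant negative correction) and $\lambda$ away from $1/2$ (where $I_0(w(\cdot/t))$ is strictly below its maximum) by different mechanisms, producing the uniform bound $I_V\le 2p_0-\eta$. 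Without the $\lambda$-parameter, neither the topological setup nor this two-regime energy estimate goes through as stated, so the final step of your plan cannot be completed along the lines you describe.
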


\begin{remark}\label{obs1.2}
	The condition {\bf ($V_2$)} implies that $V \in L^{N/2}(\mathbb{R}^N)$ and $\nabla V(x) \cdot x \in L^{N/2}(\mathbb{R}^N)$, for all $x \in \mathbb{R}^N$. Moreover,
	\begin{equation}\label{1.2}
	V(x) \to 0, \quad \textrm{as} \; |x| \to \infty,
	\end{equation}
	\begin{equation}\label{1.3}
	\nabla V(x) \cdot x \to 0, \quad \textrm{as} \; |x| \to \infty.
	\end{equation}
	
	Condition {\bf ($V_5$)} implies that there is a constant $A_3>0$ such that
	\begin{equation}\label{1.4}
	|xH(x)x| \leq A_3, \quad  \forall \, x \in \mathbb{R}^N.
	\end{equation}
\end{remark}

Note that a model potential $V$, defined by $V(x):=(1+|x|)^{-k}$, with 	$k > \max\{2,N-2\}$ , satisfies the assumptions {\bf ($V_1$)--($V_5$)}.

Also note that assumption $(f_1)$ implies that $f^{(i)}(0)=0$ and extends $f^{(i)}$ continuously to $0$, for $i=1,2,3$.
Furthermore, L'H\^opital's rule and $(f_2)$ give that
\begin{equation} \label{hopital}
\lim_{s \to 0^+}\dfrac{f(s)}{s^{2^*-1}} = 
\lim_{s \to 0^+}\dfrac{f^{(i)}(s)}{s^{2^*-\{1+i\}} }=0, \quad i=1, 2, 3.
\end{equation}
On the other hand, hypotheses $(f_1)$ and $(f_2)$ imply
\begin{equation} \label{hop}
\lim_{s \to 0^+}\dfrac{F(s)}{s^{2^*}} = 
\lim_{s \to + \infty}\dfrac{F(s)}{s^{2^*}}=0.
\end{equation}

This paper is organized as follows: Section 2 is devoted to presenting the variational setup and the properties of the associated Pohozaev manifold. In Section 3 we study the behaviour of constrained minimizing sequences of the operator associated with
problem \eqref {1.1}. Tight estimates of interactions of two translated and dilated copies of a positive solution of the autonomous problem are obtained in Section 4. Finally, these estimates are applied in the proof of the main result of existence of a positive solution stated in Theorem \ref{teo1.1}.

\section{Pohozaev manifold and variational setting}
The well know identity obtained by Pohozaev in \cite{pohozaev} has since then been very useful as a constraint in the study of scalar field equations. We will take it as a fundamental tool for our approach. Its version for non-autonomous problems is based in the work of De Figueiredo, Lions and Nussbaum \cite{djairo} which we state here for the sake of completeness.
\begin{proposition}\label{prop2.1}
	Let $u \in \mathcal{D}^{1,2}\!(\mathbb{R}^N)\,\backslash\,\{0\}$ be a solution of problem
$	-\Delta u=g(x,u)$, $x \in \Omega $, 
$	u(x)=0$, $ x \in \partial\Omega $,
	where $\Omega \subset \mathbb{R}^N$ is a regular domain in $\mathbb{R}^N$ and $g \in C(\overline{\Omega}\times \mathbb{R},\mathbb{R})$. If $G(x,u)=\int_{0}^{u}g(x,s)ds$ is such that $G(\cdot,u(\cdot))$ and $x_iG_{x_i}(\cdot,u(\cdot))$ are in $L^1(\Omega)$, then $u$ satisfies $$N\!\int_{\Omega}G(x,u)dx+\sum_{i=1}^{N}\int_{\Omega}x_iG_{x_i}(x,u)dx-\frac{N-2}{2}\!\int_{\Omega}|\nabla u|^2dx=\frac{1}{2}\int_{\partial\Omega}|\nabla u|^2x\cdot \eta(x)dS_x,$$ where $\eta$ denotes the unitary exterior normal vector to boundary $\partial\Omega$ and $dS_x$ represents the area element $(N-1)$-dimensional of $\partial\Omega$. Moreover, if $\Omega=\mathbb{R}^N$, then
	\begin{equation}\label{2.1}
	\frac{N-2}{2}\!\int_{\mathbb{R}^N}|\nabla u|^2dx=N\!\int_{\mathbb{R}^N}G(x,u)dx+\sum_{i=1}^{N}\int_{\mathbb{R}^N}x_iG_{x_i}(x,u)dx.
	\end{equation}
\end{proposition}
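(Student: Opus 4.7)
The plan is to follow the classical Pohozaev multiplier strategy, testing the equation against $x\cdot\nabla u$ and separately integrating the two sides, and then to pass to the limit on balls if $\Omega=\mathbb R^N$. The key algebraic identities, which are the engine of the argument, are
\begin{equation*}
-\Delta u\,(x\cdot\nabla u)=-\operatorname{div}\!\Bigl((x\cdot\nabla u)\nabla u-\tfrac12\,x\,|\nabla u|^2\Bigr)+\tfrac{N-2}{2}|\nabla u|^2,
\end{equation*}
obtained from $\nabla(x\cdot\nabla u)=\nabla u+\tfrac12\sum_j x_j\nabla(|\nabla u|^2)\cdot\ldots$ and the usual manipulation $\sum_j x_j\partial_j(|\nabla u|^2)=\operatorname{div}(x|\nabla u|^2)-N|\nabla u|^2$, together with
\begin{equation*}
g(x,u)(x\cdot\nabla u)=\operatorname{div}\!\bigl(x\,G(x,u)\bigr)-N\,G(x,u)-\sum_{i=1}^{N}x_iG_{x_i}(x,u),
\end{equation*}
which comes from $\partial_{x_j}[G(x,u)]=G_{x_j}(x,u)+g(x,u)\partial_ju$.

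Step one: assume for the moment that $u$ is smooth enough (which classical elliptic regularity gives on the interior, since $g$ is continuous) so that the identities above can be integrated termwise on $\Omega$ (or on an approximating bounded smooth subdomain). Applying the divergence theorem to both identities produces, on the left, $-\tfrac12\int_{\partial\Omega}(x\cdot\eta)|\nabla u|^2\,dS_x-\tfrac{N-2}{2}\int_\Omega|\nabla u|^2$ after using the crucial observation that on $\partial\Omega$, since $u\equiv 0$, the gradient $\nabla u$ is parallel to $\eta$, so $(x\cdot\nabla u)(\nabla u\cdot\eta)=(x\cdot\eta)|\nabla u|^2$; and on the right, $-N\int_\Omega G(x,u)-\sum_i\int_\Omega x_iG_{x_i}(x,u)$, since $G(x,0)=0$ kills the divergence boundary piece. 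Equating the two and rearranging yields exactly the stated formula.

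Step two: for $\Omega=\mathbb R^N$, carry out the computation on a ball $B_R$ and send $R\to\infty$ along a good sequence. The $\mathcal D^{1,2}$ hypothesis ensures $|\nabla u|^2\in L^1(\mathbb R^N)$, the integrability hypotheses on $G(\cdot,u)$ and $x_iG_{x_i}(\cdot,u)$ ensure the two volume terms on the right are well defined, and the boundary integrals on $\partial B_R$ can be shown to vanish along a sequence $R_n\to\infty$ by a standard averaging argument: since $\int_0^\infty\!\int_{\partial B_R}\!(|\nabla u|^2|x|+|G(x,u)||x|)\,dS_x\,dR$ is controlled by the $L^1$ norms, one can choose $R_n$ with $R_n\int_{\partial B_{R_n}}(\cdots)\,dS\to 0$.

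The main obstacle, and the reason one invokes de Figueiredo--Lions--Nussbaum, is the regularity/approximation step: solutions in $\mathcal D^{1,2}$ are only assumed to be weak, so the pointwise identities used to produce the divergence-form expressions must be justified either via interior $C^2$ regularity (available here since $g$ is continuous in $u$ and $u$ is at least locally bounded by standard Moser iteration applied in the $\mathcal D^{1,2}$ framework) plus truncation at $|x|=R$, or via a density argument using $C_c^\infty$ approximations compatible with the integrability assumptions. Once this technical step is in place, the identity follows from the algebraic computation above.
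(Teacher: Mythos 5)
The paper itself gives no proof of Proposition~\ref{prop2.1}: it states the identity ``for the sake of completeness'' and attributes it to de~Figueiredo, Lions and Nussbaum \cite{djairo}. Your proposal follows the classical multiplier argument on which that reference is based, so at the level of strategy it is the right approach. Two points, however, need correction.

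First, your first displayed identity carries a sign error. Computing directly,
\[
\operatorname{div}\!\Bigl((x\cdot\nabla u)\nabla u-\tfrac12 x|\nabla u|^2\Bigr)
=(x\cdot\nabla u)\Delta u-\tfrac{N-2}{2}|\nabla u|^2,
\]
so the correct decomposition is $-\Delta u\,(x\cdot\nabla u)=-\operatorname{div}\bigl((x\cdot\nabla u)\nabla u-\tfrac12 x|\nabla u|^2\bigr)-\tfrac{N-2}{2}|\nabla u|^2$, with a minus rather than a plus. Your subsequent integration does produce the correct $-\tfrac{N-2}{2}\int_\Omega|\nabla u|^2$ on the left, so this is a transcription slip, but it should be fixed.

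Second, the averaging argument you use to kill the $\partial B_R$ boundary terms is misstated and, as written, does not follow from the hypotheses. The quantity
\[
\int_0^\infty\!\!\int_{\partial B_R}\!\bigl(|\nabla u|^2|x|+|G(x,u)||x|\bigr)\,dS_x\,dR
=\int_{\mathbb R^N}|x|\bigl(|\nabla u|^2+|G(x,u)|\bigr)\,dx
\]
carries an extra factor of $|x|$ and is in general \emph{not} controlled by the $L^1$ norms of $|\nabla u|^2$ and $G(\cdot,u)$. What the hypotheses actually give is $\int_0^\infty h(R)\,dR<\infty$ with $h(R):=\int_{\partial B_R}\bigl(|\nabla u|^2+|G(x,u)|\bigr)\,dS_x\geq 0$, and from this one concludes $\liminf_{R\to\infty}R\,h(R)=0$ (otherwise $h(R)\gtrsim 1/R$ near infinity and the integral diverges). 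Choosing $R_n\to\infty$ with $R_n h(R_n)\to 0$ then kills each boundary term, since all of them are bounded by $R\int_{\partial B_R}\bigl(|\nabla u|^2+|G(x,u)|\bigr)\,dS$: note in particular that, on $\partial B_R$, $u$ is \emph{not} zero, so you must keep both pieces $\int_{\partial B_R}(x\cdot\nabla u)(\nabla u\cdot\eta)\,dS$ and $\tfrac12\int_{\partial B_R}(x\cdot\eta)|\nabla u|^2\,dS$ and estimate the first by Cauchy--Schwarz as $\leq R\int_{\partial B_R}|\nabla u|^2\,dS$, rather than invoking $\nabla u\parallel\eta$ as in the bounded-domain case. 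With these two corrections the sketch is sound and is essentially the argument of \cite{djairo}.
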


In the case of problem (\ref{1.1}), by (\ref{2.1}), we have the following Pohozaev identity
\begin{equation}\label{2.5}
\frac{N-2}{2}\!\int_{\mathbb{R}^N}|\nabla u|^2dx=N\!\int_{\mathbb{R}^N}G(x,u)dx-\frac{1}{2}\int_{\mathbb{R}^N}\nabla V(x)\cdot x\,u^2dx,
\end{equation}
where $G(x,u):=F(u)-V(x)\dfrac{u^2}{2}$.

Let us consider the functional $J_V:\mathcal{D}^{1,2}(\mathbb{R}^N) \to \mathbb{R}$, defined by 
\begin{eqnarray*}
	J_V(u)
		= \frac{N-2}{2}\!\int_{\mathbb{R}^N}|\nabla u|^2dx+\frac{N}{2}\!\int_{\mathbb{R}^N}\left(\frac{\nabla V(x)\cdot x}{N}+V(x)\right)u^2dx-N\!\int_{\mathbb{R}^N}F(u)dx,
\end{eqnarray*}
and define the Pohozaev manifold associated to the problem (\ref{1.1}) by $$\mathcal{P}_V:=\{u \in \mathcal{D}^{1,2}(\mathbb{R}^N)\,\backslash\,\{0\}: J_V(u)=0\}.$$
Let us also consider the Pohozaev manifold $\mathcal{P}_0$ associated to the limit problem (\ref{P_0}). We have $$\mathcal{P}_0:=\{u \in \mathcal{D}^{1,2}(\mathbb{R}^N)\,\backslash\,\{0\}: J_0(u)=0\},$$ where $$J_0(u):=\frac{N-2}{2}\!\int_{\mathbb{R}^N}|\nabla u|^2dx-N\!\int_{\mathbb{R}^N}F(u)dx.$$ 

We define $f(s):=-f(-s)$ for $s<0$. Then, by condition {\bf ($f_1$)}, we have $f \in C^1(\mathbb{R})$ and it is an odd function. Note that, if $u$ is a positive solution of problem (\ref{1.1}) for this new function, it is also a solution of (\ref{1.1}) for the original function $f$. Hereafter, we shall consider this extension, and establish the existence of a positive solution for \eqref{1.1}. 

We consider the Hilbert space $\mathcal{D}^{1,2}(\mathbb{R}^N):=\{u \in L^{2^*}(\mathbb{R}^N): \nabla u \in L^2(\mathbb{R}^N)\}$ with its standard scalar product and norm $$\langle u,v\rangle:=\displaystyle\int_{\mathbb{R}^N}\nabla u \cdot \nabla v\,dx, \qquad \Vert u\Vert^2:=\displaystyle\int_{\mathbb{R}^N}|\nabla u|^2dx.$$

Since $f \in C^1(\mathbb{R})$ and $f$ satisfies {\bf ($f_1$)}, a classical result of Berestycki and Lions establishes the existence of a ground state solution $w \in C^2(\mathbb{R}^N)$ to problem (\ref{P_0}), which is positive, radially symmetric and decreasing in the radial direction, see \cite[Theorem 4]{Beres-Lions}.

In what follows, we will use the following notation:
given $u,v \in \mathcal{D}^{1,2}\!(\mathbb{R}^N)$, let's define
\begin{equation}\label{3.3}
\langle u,v\rangle_V:=\displaystyle\int_{\mathbb{R}^N}\nabla u\cdot \nabla v+V(x)uv\;\!dx, \qquad \Vert u\Vert_{V}^{2}:=\displaystyle\int_{\mathbb{R}^N}|\nabla u|^2+V(x)u^2\;\!dx;
\end{equation}
also $\Vert\cdot\Vert_q$ denotes the $L^q(\mathbb{R}^N)$-norm, for all $q \in [1,\infty)$ and $C$, $C_i$ are positive constants which may vary from line to line. 
By assumptions {\bf ($V_1$)} and {\bf ($V_2$)}, we can see that the expressions in (\ref{3.3}) are well defined and, using the Sobolev inequality, we conclude that $\Vert\cdot\Vert_V$ is a norm in $\mathcal{D}^{1,2}(\mathbb{R}^N)$ which is equivalent to the standard one.

Because of assumption $(f_1)$ the functional $I_V:\mathcal{D}^{1,2}(\mathbb{R}^N) \to \mathbb{R}$, defined by 
$$I_V(u)=\frac{1}{2}\Vert u\Vert_{V}^{2}-\int_{\mathbb{R}^N}\!F(u)dx$$
 is $C^1$ and hence weak solutions of problem (\ref{1.1}) are its critical points.

We also have that solutions of (\ref{P_0}) are critical points of the functional $$I_0(u):=\frac{1}{2}\Vert u\Vert^2-\int_{\mathbb{R}^N}F(u)dx, \qquad u \in \mathcal{D}^{1,2}(\mathbb{R}^N).$$

We recall that $w$ is a ground state solution of the limit problem (\ref{P_0}) if
\begin{eqnarray}\label{m} 
I_0(w)=m:=\inf\{I_0(u): u\in \mathcal{D}^{1,2}(\mathbb{R}^N)\setminus \{0\} \;\text{is a solution of the limit problem}  \}.
\end{eqnarray}

Later on we will show that $\mathcal{P}_V \neq \emptyset$ and that $p_V \in (0,p_0]$, where 
\begin{equation}\label{3.5}
p_V=\inf_{u \in \mathcal{P}_V}I_V(u), \qquad  p_0=\inf_{u \in \mathcal{P}_0}I_0(u).
\end{equation}
It was shown in \cite{Mederski} that $m=p_0$, under more general hypotheses, which contains ours as a particular case. 

	The following result is an essential tool for developing our new arguments.
\begin{lemma}\label{lema4.3}
	Assume that {\bf ($V_1$)}, {\bf ($V_4$)} and {\bf ($f_1$)} hold true. Then, there exists a real number $\rho > 0$ such that $\inf_{u \in \mathcal{P}_V}\Vert \nabla u \Vert_2 \geq \rho$.
\end{lemma}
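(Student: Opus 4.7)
The plan is to derive a lower bound on $\|\nabla u\|_2$ directly from the Pohozaev constraint $J_V(u)=0$, by estimating the two lower-order terms in $J_V$ from above using hypotheses $(f_1)$, $(V_4)$, and the Sobolev inequality \eqref{desigual}.

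First, writing $Z(x):=\dfrac{\nabla V(x)\cdot x}{N}+V(x)$ and splitting $Z=Z^{+}+Z^{-}$ with $Z^{+}\ge 0$ and $Z^{-}\le 0$, the condition $J_V(u)=0$ gives
\[
\tfrac{N-2}{2}\|\nabla u\|_{2}^{2}=N\!\int_{\mathbb{R}^N} F(u)\,dx-\tfrac{N}{2}\!\int_{\mathbb{R}^N} Z^{+}u^{2}\,dx+\tfrac{N}{2}\!\int_{\mathbb{R}^N}|Z^{-}|u^{2}\,dx\le N\!\int_{\mathbb{R}^N} F(u)\,dx+\tfrac{N}{2}\!\int_{\mathbb{R}^N}|Z^{-}|u^{2}\,dx.
\]
The $Z^{+}$ term is discarded because it has the favorable sign.

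Second, I estimate the two remaining terms. From $(f_1)$ (with $i=0$) it follows that $|F(s)|\le \frac{A_2}{2^{*}}|s|^{2^{*}}$, and combining this with \eqref{desigual} yields
\[
N\!\int_{\mathbb{R}^N} F(u)\,dx\le \frac{NA_2}{2^{*}}\,S^{-2^{*}/2}\|\nabla u\|_{2}^{2^{*}}.
\]
For the potential term, H\"older's inequality and \eqref{desigual} give $\int |Z^{-}|u^{2}\,dx\le \|Z^{-}\|_{N/2}\,S^{-1}\|\nabla u\|_{2}^{2}$. Hypothesis $(V_4)$ provides strict inequality $\|Z^{-}\|_{N/2}<S/2^{*}$, so there is $\alpha\in(0,1)$ with $\|Z^{-}\|_{N/2}=\alpha S/2^{*}$, whence, using $N/(2\cdot 2^{*})=(N-2)/4$,
\[
\tfrac{N}{2}\!\int_{\mathbb{R}^N}|Z^{-}|u^{2}\,dx\le \alpha\,\tfrac{N-2}{4}\|\nabla u\|_{2}^{2}.
\]

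Third, plugging both estimates into the displayed bound for $\frac{N-2}{2}\|\nabla u\|_2^2$ and absorbing the $\alpha$-term on the left gives
\[
\tfrac{N-2}{4}(2-\alpha)\,\|\nabla u\|_{2}^{2}\le \tfrac{NA_2}{2^{*}}S^{-2^{*}/2}\|\nabla u\|_{2}^{2^{*}}.
\]
Since $u\neq 0$ forces $\|\nabla u\|_{2}>0$, dividing by $\|\nabla u\|_{2}^{2}$ and using $2^{*}-2=4/(N-2)>0$ yields a strictly positive lower bound
\[
\|\nabla u\|_{2}^{2^{*}-2}\ge \frac{(N-2)(2-\alpha)}{4}\cdot\frac{2^{*}}{NA_2}\,S^{2^{*}/2},
\]
which defines the required $\rho>0$.

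The main subtlety is arranging the constants so that the gradient coefficient absorbed from the $|Z^{-}|$ estimate is strictly less than $\frac{N-2}{2}$; this is exactly what the sharp threshold $(S/2^{*})^{N/2}$ in $(V_4)$ was designed to ensure, since it matches the Sobolev factor $S^{-1}$ combined with the Pohozaev coefficient $N/2$ through the identity $N/(2\cdot 2^{*})=(N-2)/4$. Hypothesis $(V_1)$ is not used directly in this argument, only $(V_4)$ and $(f_1)$.
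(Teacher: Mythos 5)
Your proof is correct and takes essentially the same route as the paper: both start from the Pohozaev constraint $J_V(u)=0$, control the potential term via H\"older, Sobolev, and the threshold $\|Z^-\|_{N/2}<S/2^*$ in $(V_4)$, bound $\int F(u)$ by $\|\nabla u\|_2^{2^*}$ using $(f_1)$ and \eqref{desigual}, and extract a lower bound of the form $\|\nabla u\|_2^{2^*-2}\geq$ const. The only differences are cosmetic (you keep a single inequality chain and use the slightly sharper bound $|F(s)|\leq \frac{A_2}{2^*}|s|^{2^*}$ obtained by integrating the $i=0$ case, while the paper first isolates $\int F(u)\geq \frac{1}{2\cdot 2^*}\|\nabla u\|_2^2$ and uses $|F(s)|\leq A_2|s|^{2^*}$ directly).
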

\begin{proof}
	Let $u \in \mathcal{P}_V$ be given. Then, using $(\ref{2.5})$, H\"older inequality and hypotheses {\bf ($V_4$)} and {\bf ($f_1$)}, we get
\begin{eqnarray*}
		\int_{\mathbb{R}^N}F(u)dx 
		&\geq& \frac{1}{2^*}\!\int_{\mathbb{R}^N}|\nabla u|^2dx - \frac{1}{2 \cdot 2^*}\!\int_{\mathbb{R}^N}|\nabla u|^2dx 
		= \frac{1}{2 \cdot 2^*}\!\int_{\mathbb{R}^N}|\nabla u|^2dx > 0.
	\end{eqnarray*}
	Thus,
	\begin{equation}\label{4.1}
	\Vert \nabla u \Vert_2^2 = \int_{\mathbb{R}^N}|\nabla u|^2dx \leq 2 \cdot 2^*\!\int_{\mathbb{R}^N}F(u)dx \leq 2 \cdot 2^*A_2\!\int_{\mathbb{R}^N}|u|^{2^*}dx = 2 \cdot 2^*A_2\Vert u \Vert_{2^*}^{2^*}.
	\end{equation}
	On the other hand, using the Gagliardo-Nirenberg-Sobolev inequality $(\ref{desigual})$ 
	in (\ref{4.1}), it follows
	$$0 < \Vert \nabla u \Vert_2^2 \leq 2 \cdot 2^*A_2\!\left(\!\frac{1}{\sqrt{S}}\Vert \nabla u \Vert_2\!\right)^{\!\!2^*}.$$ Therefore, taking  $\rho^{2^*-2}=\dfrac{S^{2^*\!/2}}{2 \cdot 2^*A_2}$, we obtain $\Vert \nabla u \Vert_2 \geq \rho$. Since $u \in \mathcal{P}_V$ is arbitrary, it follows that $\inf_{u \in \mathcal{P}_V}\Vert \nabla u \Vert_2 \geq \rho$.
\end{proof}

	\begin{lemma}\label{coracao2}
	Let $u\in \mathcal{P}_V$, then $J'_V(u)\neq 0$ in $(\mathcal{D}^{1,2}(\mathbb{R}^N))'$.
\end{lemma}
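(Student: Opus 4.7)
The plan is to argue by contradiction. Assume there exists $u\in\mathcal{P}_V$ with $J'_V(u)=0$ in $(\mathcal{D}^{1,2}(\mathbb{R}^N))'$. Writing out the Fr\'echet derivative
\[
J'_V(u)[\varphi] = (N-2)\!\int_{\mathbb{R}^N}\!\!\nabla u\cdot\nabla\varphi\,dx + N\!\!\int_{\mathbb{R}^N}\!\!\left(\frac{\nabla V\cdot x}{N}+V\right)\!u\varphi\,dx - N\!\!\int_{\mathbb{R}^N}\!\!f(u)\varphi\,dx,
\]
the vanishing of $J'_V(u)$ is equivalent to $u$ being a nontrivial weak solution of the auxiliary elliptic equation
\[
-(N-2)\,\Delta u + \bigl(\nabla V\cdot x + NV\bigr)u = N f(u)\quad\text{in }\mathbb{R}^N.
\]
The integrability hypotheses of Proposition \ref{prop2.1} for this equation follow from $(V_2)$, $(V_5)$, $(f_1)$ and the Sobolev embedding $\mathcal{D}^{1,2}\hookrightarrow L^{2^*}$.

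Next I would apply Proposition \ref{prop2.1} to this auxiliary equation. The crucial computation $x\cdot\nabla_x(\nabla V\cdot x+NV)=xH(x)x+(N+1)\nabla V\cdot x$ produces an identity relating $\Vert\nabla u\Vert_2^2$, $\int F(u)\,dx$, $\int(\nabla V\cdot x+NV)u^2\,dx$, and $\int u^2[xHx+(N+1)\nabla V\cdot x]\,dx$. Combining this linearly with the Pohozaev relation $J_V(u)=0$ (multiplied by $2N$) so as to cancel the $\int F(u)\,dx$ and $\int(\nabla V\cdot x+NV)u^2\,dx$ terms, I obtain
\begin{equation*}
2(N-2)\,\Vert\nabla u\Vert_2^2 = \int_{\mathbb{R}^N} u^2\bigl[xH(x)x+(N+1)\nabla V\cdot x\bigr]\,dx. \tag{$\star$}
\end{equation*}

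The concluding step is to bound the right-hand side of $(\star)$ from above. Decomposing
\[
xHx+(N+1)\nabla V\cdot x = N\!\left(\frac{xHx}{N}+\nabla V\cdot x\right)+\nabla V\cdot x \leq NK^+(x)+W^+(x),
\]
H\"older's inequality and the Gagliardo-Nirenberg-Sobolev inequality \eqref{desigual}, together with the strict bounds $\Vert K^+\Vert_{N/2}<2S/2^*$ from $(V_5)$ and $\Vert W^+\Vert_{N/2}<S$ from $(V_3)$, yield
\[
N\!\!\int_{\mathbb{R}^N}\! u^2 K^+\,dx < (N-2)\Vert\nabla u\Vert_2^2, \qquad \int_{\mathbb{R}^N}\! u^2 W^+\,dx < \Vert\nabla u\Vert_2^2.
\]
Since Lemma \ref{lema4.3} provides $\Vert\nabla u\Vert_2\geq\rho>0$, summing these bounds and inserting into $(\star)$ gives $2(N-2)<N-1$, i.e.\ $N<3$, contradicting $N\geq 3$.

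The main obstacle is recognizing which linear combination of the two Pohozaev identities isolates precisely the decomposition $N(xHx/N+\nabla V\cdot x)+\nabla V\cdot x$ on the right, since only this combination is simultaneously controlled by $(V_3)$ and $(V_5)$. The hypotheses are then sharply calibrated: the upper bound $(N-1)\Vert\nabla u\Vert_2^2$ first coincides with the lower bound $2(N-2)\Vert\nabla u\Vert_2^2$ exactly at $N=3$, and the strict character of $(V_3)$ and $(V_5)$ is what secures the contradiction even in this critical dimension.
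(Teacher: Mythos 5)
Your approach is structurally identical to the paper's: assume $J'_V(u)=0$, observe that $u$ then solves the Euler--Lagrange equation of $J_V$, apply the Pohozaev identity to that auxiliary equation, subtract $N$ times $J_V(u)=0$, and derive a contradiction using H\"older, the Sobolev inequality \eqref{desigual}, and the lower bound $\Vert\nabla u\Vert_2\geq\rho>0$ from Lemma~\ref{lema4.3}. So there is no genuinely new route here.

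That said, there is one substantive computational point that distinguishes your write-up. Your identity $x\cdot\nabla_x(\nabla V\cdot x+NV)=xH(x)x+(N+1)\nabla V\cdot x$ is correct (the term $\nabla V\cdot x$ appears because $\partial_{x_i}(\nabla V\cdot x)=(Hx)_i+V_{x_i}$), and it leads to $(\star)$ with the coefficient $N+1$. The paper's displayed Pohozaev identity for the auxiliary equation instead carries $\frac{N}{2}\int\!\bigl(\tfrac{xHx}{N}+\nabla V\cdot x\bigr)u^2$, i.e.\ coefficient $N$ rather than $N+1$; a $\tfrac{1}{2}\int(\nabla V\cdot x)\,u^2$ term appears to have been dropped. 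This is why the paper closes the estimate using $(V_5)$ alone, while you, carrying the correct extra piece, must decompose $xHx+(N+1)\nabla V\cdot x\leq NK^++W^+$ and invoke $(V_3)$ as well as $(V_5)$. Your version then gives $2(N-2)\Vert\nabla u\Vert_2^2<(N-1)\Vert\nabla u\Vert_2^2$, which is a contradiction for every $N\geq3$ (with $N=3$ the borderline case rescued by the strict inequalities in $(V_3)$, $(V_5)$). This is tight but correct. One small caveat worth noting is that because you need $(V_3)$, the hypothesis set for this Lemma is slightly larger than the set $(V_1)$, $(V_4)$, $(f_1)$, $(f_2)$ under which Proposition~\ref{lema3.1}(c) cites it; since the main Theorem~\ref{teo1.1} assumes $(V_1)$--$(V_5)$ throughout, this is harmless for the overall argument, but it is a point you should flag.
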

\begin{proof}
Suppose by contradiction that $J'_V(u)=0$ in $(\mathcal{D}^{1,2}(\mathbb{R}^N))'$, then $u$ is a critical point of the functional $J_V\in C^1$, hence it is a weak solution of the equation
$$-(N-2)\Delta u+N\left(\frac{\nabla V(x).x}{N}+V(x)  \right)u=Nf(u), \qquad u \in \mathcal{D}^{1,2}(\mathbb{R}^{N}).$$
By Proposition 	\ref{prop2.1},  $u$ satisfies the Pohozaev identity for this equation, as follows
$$\frac{(N-2)^2}{2}\int_{\mathbb{R}^N}|\nabla u|^2=N^2\int_{\mathbb{R}^N}F(u)-\left(\frac{\nabla V(x).x}{N}+V(x) \right)\frac{u^2}{2}-\frac{N}{2}\int_{\mathbb{R}^N}\left(\frac{xH(x)x}{N}+\nabla V(x).x \right) u^2.$$
Using that $J_V(u)=0$, it yields
$$\frac{(N-2)^2}{2}\int_{\mathbb{R}^N}|\nabla u|^2= \frac{(N-2)N}{2}\int_{\mathbb{R}^N}|\nabla u|^2-\frac{N}{2}\int_{\mathbb{R}^N}\left(\frac{xHx}{N}+\nabla V(x).x   \right)u^2,$$
or equivalently
\begin{equation}\label{desigual1}
0=(N-2)\int_{\mathbb{R}^N}|\nabla u|^2-\frac{N}{2}\int_{\mathbb{R}}\left(\frac{xHx}{N}+ \nabla V(x).x \right)u^2.
\end{equation}
Assumption $(V_5)$ implies
$$\frac{N}{2}\int_{\mathbb{R}^N}\left(\frac{xHx}{N}+\nabla V(x).x   \right)u^2 < \frac{N}{2}\int_{\mathbb{R}^N}K^+(x)u^2\leq \frac{N-2}{2}\int_{\mathbb{R}^N}|\nabla u|^2.$$
Hence, substituting this inequality in $(\ref{desigual1})$ and by Lemma \ref{lema4.3} it results that
$$0> (N-2)\int_{\mathbb{R}^N}|\nabla u|^2-\frac{N-2}{2}\int_{\mathbb{R}^N}|\nabla u|^2\geq\frac{N-2}{2}\rho^2 > 0,$$
which is an absurd. 
\end{proof}	

Next proposition states the main properties of $\mathcal{P}_V$. These results are by now standard, but we will include short proofs just to enlighten the small differences.
\begin{proposition}\label{lema3.1}
	Assume that {\bf ($V_1$)}, {\bf ($V_4$)}, {\bf ($f_1$)} and {\bf ($f_2$)} hold true. Then:\\
	$(a)$ there exists $\varrho>0$ such that $\Vert u\Vert_{V}\geq\varrho$,
		for every $u\in\mathcal{P}_{V}$;\\
	$(b)$ $u_0 \equiv 0$ is an isolated point of $J_V^{-1}\!\left(\left\{0\right\}\right)$;\\
	$(c)$ $\mathcal{P}_{V}$ is a closed $C^{2}$-submanifold of $\mathcal{D}^{1,2}(\mathbb{R}^{N})$.		
\end{proposition}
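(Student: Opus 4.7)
The plan is to dispatch the three statements in the order given, with each one feeding into the next. For part $(a)$, I would simply combine Lemma \ref{lema4.3} (which yields $\|\nabla u\|_2 \geq \rho > 0$ for every $u \in \mathcal{P}_V$) with the norm equivalence between $\|\cdot\|_V$ and the standard Dirichlet norm $\|\cdot\|$ on $\mathcal{D}^{1,2}(\mathbb{R}^N)$, guaranteed by $(V_1)$ and $(V_2)$ through the Sobolev embedding (this equivalence is asserted immediately after definition (\ref{3.3})). Setting $\varrho := c\rho$ for the relevant equivalence constant $c > 0$ gives the desired uniform lower bound.

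For part $(b)$, observe that $J_V(0) = 0$ trivially (all three integrals vanish at $u\equiv 0$), so $J_V^{-1}(\{0\}) = \mathcal{P}_V \cup \{0\}$. By $(a)$, every element of $\mathcal{P}_V$ satisfies $\|u\|_V \geq \varrho > 0$, hence the open ball $B_{\varrho/2}(0) \subset \mathcal{D}^{1,2}(\mathbb{R}^N)$ meets $J_V^{-1}(\{0\})$ only at $0$, and $0$ is isolated in that preimage.

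For part $(c)$, I would first argue that $J_V \in C^2(\mathcal{D}^{1,2}(\mathbb{R}^N),\mathbb{R})$: the two quadratic terms are smooth (using $(V_1)$--$(V_2)$ to ensure $L^{N/2}$-integrability of $V$ and $\nabla V\cdot x$, so the Hölder/Sobolev estimates yield continuous second variations), and the nonlinear term $\int F(u)$ is $C^2$ because $(f_1)$ furnishes the subcritical growth $|f'(s)| \leq A_2 |s|^{2^*-2}$. By Lemma \ref{coracao2}, $J_V'(u) \neq 0$ for every $u \in \mathcal{P}_V$, so $0$ is a regular value of $J_V$ restricted to $\mathcal{D}^{1,2}(\mathbb{R}^N)\setminus\{0\}$. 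The implicit function theorem then upgrades $\mathcal{P}_V$ to a $C^2$-submanifold of codimension one. Finally, closedness follows from $(a)$ together with the continuity of $J_V$: any sequence $u_n \in \mathcal{P}_V$ with $u_n \to u$ satisfies $J_V(u) = 0$ and $\|u\|_V \geq \varrho > 0$, so $u \in \mathcal{P}_V$.

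The only nontrivial step is invoking Lemma \ref{coracao2} for the regular-value condition; everything else is bookkeeping once part $(a)$ is in hand. Note that although the hypotheses explicitly listed in the proposition omit $(V_5)$, its invocation enters through Lemma \ref{coracao2}, whose conclusion is borrowed wholesale in the argument for $(c)$.
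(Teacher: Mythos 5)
Your proof is correct and essentially mirrors the paper's, with one streamlining in part $(a)$: you derive the lower bound by citing Lemma~\ref{lema4.3} together with the equivalence $\Vert\cdot\Vert_V\sim\Vert\cdot\Vert$, whereas the paper re-derives it from scratch via the inequality
$J_V(u)\geq\bigl[C_1\tfrac{N-2}{2}-C_2A_1N\Vert u\Vert_V^{2^*-2}\bigr]\Vert u\Vert_V^2$.
Both routes are valid, since Lemma~\ref{lema4.3} is established before Proposition~\ref{lema3.1} and under a subset of its hypotheses; your version avoids repeating the Sobolev--H\"older estimate, while the paper's version yields the slightly stronger byproduct that $J_V(u)>0$ whenever $0<\Vert u\Vert_V<\varrho$, a fact the paper reuses verbatim in the proof of Lemma~\ref{lema4.11}. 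Your argument for $(b)$ (distance separation from the origin) is logically equivalent to the paper's (sign of $J_V$ near zero), and your argument for $(c)$ coincides with the paper's: $C^2$ regularity of $J_V$, regular-value condition from Lemma~\ref{coracao2}, implicit function theorem, and closedness from continuity plus $(a)$. Your closing remark that Lemma~\ref{coracao2} silently imports $(V_5)$ is an accurate bookkeeping observation; one could add that the norm equivalence $\Vert\cdot\Vert_V\sim\Vert\cdot\Vert$ (used in both your proof of $(a)$ and the paper's) silently imports $(V_2)$ as well, so the hypothesis list of the Proposition as printed is not self-contained --- a gap shared by the paper's own proof, not one introduced by you.
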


\begin{proof} 
	(a) Let $u \in \mathcal{D}^{1,2}(\mathbb{R}^N)\,\backslash\,\{0\}$ be given arbitrarily. Using conditions {\bf ($V_4$)}, {\bf ($f_1$)} and H\"older inequality, we have 
	\begin{eqnarray*}
		J_V(u) &=& \frac{N-2}{2}\!\int_{\mathbb{R}^N}|\nabla u|^2dx+\frac{N}{2}\!\int_{\mathbb{R}^N}\!\left(\frac{\nabla V(x)\cdot x}{N}+V(x)\!\right)\!u^2dx-N\!\int_{\mathbb{R}^N}F(u)dx \\
	&\geq& \frac{N-2}{4}\!\int_{\mathbb{R}^N}|\nabla u|^2dx - A_1N\!\int_{\mathbb{R}^N}\left|u\right|^{2^*}\!dx.
	\end{eqnarray*}
	By the equivalence of norms $\Vert \cdot \Vert_V$ and $\Vert \cdot \Vert$ in $\mathcal{D}^{1,2}(\mathbb{R}^N)$ and by the continuity of the embedding of  $\mathcal{D}^{1,2}(\mathbb{R}^N)$ into $L^{2^*}(\mathbb{R}^N)$, we obtain $C_1,C_2>0$ such that $$\int_{\mathbb{R}^N}|\nabla u|^2dx \geq C_1\Vert u \Vert_V^2, \qquad \int_{\mathbb{R}^N}\left|u\right|^{2^*}\!dx \leq C_2\Vert u \Vert_{V}^{2^*}.$$ 
	If $u \in \mathcal{P}_V$, then $J_V(u) = 0$, so this implies
	\begin{eqnarray*}
		0 \geq \frac{N-2}{4}\!\int_{\mathbb{R}^N}|\nabla u|^2dx - A_1N\!\int_{\mathbb{R}^N}\left|u\right|^{2^*}\!dx 
	= \left[C_1\,\frac{N-2}{2} - C_2\,A_1\,N\Vert u \Vert_{V}^{2^*-2}\right]\!\Vert u \Vert_V^2,
	\end{eqnarray*}
	that is, $\Vert u \Vert_{V}^{2^*-2} \geq \frac{C_1(N-2)}{2C_2\,A_1N}$. Therefore, taking $\varrho:=\frac{C_1(N-2)}{2C_2\,A_1N}^{\!1/(2^*-2)}$, we have $\Vert u\Vert_V \geq \varrho$, for every $u \in \mathcal{P}_V$, proving item (a).
	\\
	(b) It follows from (a) that exists $\varrho > 0$ such that if $0 < \Vert u \Vert_V < \varrho$, then $J_V(u) > 0$. Therefore, $u_0 \equiv 0$ is an isolated point of $J_V^{-1}\!\left(\left\{0\right\}\right)$, proving item (b).
	\\
	(c) Observe that $J_V$ is continuous, and so $\mathcal{P}_V \cup \{0\}=J_V^{-1}(\{0\})$ is a closed subset of $\mathcal{D}^{1,2}(\mathbb{R}^N)$. By item (b), we get that $\mathcal{P}_V$ is closed in $\mathcal{D}^{1,2}(\mathbb{R}^N)$. 
	Moreover, by Lemma \ref{coracao2}, it holds that $J'_V(u)\neq 0$ 
	for every $u \in \mathcal{P}_V$, which implies that 0 is a regular value of $J_V:\mathcal{D}^{1,2}(\mathbb{R}^N)\,\backslash\,\{0\} \to \mathbb{R}$. So, as $J_V \in C^2\left(\mathcal{D}^{1,2}(\mathbb{R}^N),\mathbb{R}\right)$, it follows that $\mathcal{P}_V$ is a closed $C^2$-submanifold of $\mathcal{D}^{1,2}(\mathbb{R}^N)$.
\end{proof}

\begin{lemma}\label{lema3.2}
	Assume {\bf ($V_1$)} and {\bf ($V_3$)} hold true. Given a positive constant $d$ and a sequence $(u_n) \subset \mathcal{P}_V$ such that $I_V(u_n) \to d$, then the sequence $(u_n)$ is bounded in $\mathcal{D}^{1,2}\!(\mathbb{R}^N)$.
\end{lemma}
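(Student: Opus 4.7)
The plan is to combine the Pohozaev constraint $J_V(u_n)=0$ with the functional $I_V(u_n)$ to produce an expression purely in terms of $\|\nabla u_n\|_2^2$ and a weighted $L^2$-term involving $\nabla V\cdot x$, and then use hypothesis $(V_3)$ together with the Sobolev inequality to get coercivity.

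More concretely, from $J_V(u_n)=0$ I would solve for $\int F(u_n)\,dx$, obtaining
\[
\int_{\mathbb{R}^N} F(u_n)\,dx=\frac{N-2}{2N}\int_{\mathbb{R}^N}|\nabla u_n|^2\,dx+\frac{1}{2}\int_{\mathbb{R}^N}V(x)u_n^2\,dx+\frac{1}{2N}\int_{\mathbb{R}^N}\nabla V(x)\cdot x\,u_n^2\,dx,
\]
and substitute this into $I_V(u_n)=\tfrac12\|u_n\|_V^2-\int F(u_n)\,dx$. The $V(x)u_n^2$ terms cancel, and the gradient coefficients collapse to $\tfrac12-\tfrac{N-2}{2N}=\tfrac1N$, leaving the clean identity
\[
N\,I_V(u_n)=\int_{\mathbb{R}^N}|\nabla u_n|^2\,dx-\frac{1}{2}\int_{\mathbb{R}^N}\nabla V(x)\cdot x\,u_n^2\,dx.
\]

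Next I would bound the second integral from above using $\nabla V(x)\cdot x\le W^+(x)$, Hölder's inequality with exponents $N/2$ and $N/(N-2)$, and the Gagliardo-Nirenberg-Sobolev inequality \eqref{desigual}:
\[
\int_{\mathbb{R}^N}\nabla V(x)\cdot x\,u_n^2\,dx\le \|W^+\|_{N/2}\,\|u_n\|_{2^*}^2\le \frac{\|W^+\|_{N/2}}{S}\,\|\nabla u_n\|_2^2.
\]
By hypothesis $(V_3)$, $\|W^+\|_{N/2}<S$, so the constant $\alpha:=1-\tfrac{\|W^+\|_{N/2}}{2S}$ is strictly greater than $1/2$. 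Combining the two displays gives
\[
N\,I_V(u_n)\ge \alpha\,\|\nabla u_n\|_2^2,
\]
and since $I_V(u_n)\to d$ is bounded, this yields a uniform bound on $\|\nabla u_n\|_2$, hence boundedness in $\mathcal{D}^{1,2}(\mathbb{R}^N)$.

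There is no real obstacle here; the only point that needs attention is verifying that substituting $J_V(u_n)=0$ into $I_V(u_n)$ produces the correct algebraic cancellation so the potential term $V(x)u_n^2$ disappears and leaves a manifestly coercive expression modulo the controlled $W^+$-term. Hypothesis $(V_3)$ is exactly tailored so that $\tfrac{\|W^+\|_{N/2}}{2S}<\tfrac12$, which is what makes the coefficient $\alpha$ strictly positive.
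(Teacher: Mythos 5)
Your proposal is correct and follows essentially the same route as the paper: substitute the Pohozaev constraint into $I_V(u_n)$ to eliminate the $V(x)u_n^2$ term and arrive at $NI_V(u_n)=\|\nabla u_n\|_2^2-\tfrac12\int\nabla V\cdot x\,u_n^2$, then control the second integral by $W^+$, H\"older, and Sobolev via $(V_3)$ to obtain coercivity. The only cosmetic difference is that you track the explicit constant $\alpha$, whereas the paper simply notes the lower bound $\tfrac{1}{2N}\|\nabla u_n\|_2^2$.
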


\begin{proof}
	Using the definition of $\mathcal{P}_V$, the hypotheses {\bf ($V_1$)}, {\bf ($V_3$)}, H\"older inequality and the equi-valence of norms $\Vert \cdot \Vert$ and $\Vert \cdot \Vert_V$, there exists a constant $C_1>0$ such that
	\begin{eqnarray*}
		I_V(u_n)
		\geq \frac{1}{N}\!\int_{\mathbb{R}^N}|\nabla u_n|^2dx - \frac{1}{2N}\int_{\mathbb{R}^N}|\nabla u_n|^2dx 
		= \frac{1}{2N}\int_{\mathbb{R}^N}|\nabla u_n|^2dx \geq C_1\Vert u_n \Vert_V^2.
	\end{eqnarray*}
Since $I_V(u_n) \to d>0$, there exists $n_0 \in\mathbb{N}$ such that $d+1 \geq I_V(u_n) \geq C_1\Vert u_n \Vert_V^2, \forall \, n \geq n_0.$ Thus, taking $C_2:=\max\!\left\{\Vert u_1 \Vert_V, \cdots, \Vert u_{n_0} \Vert_V, \sqrt{\frac{d+1}{C_1}}\right\}$, it follows that $$\Vert u_n \Vert_V \leq C_2, \quad \forall \, n \in \mathbb{N}.$$ Therefore, $(u_n)$ is bounded in $\mathcal{D}^{1,2}\!(\mathbb{R}^N)$ and the proof is complete.
\end{proof}

\section{Study of energy levels}

Let us gather some information about the energy levels of $I_V$ and $I_0$.

\begin{lemma}\label{lema4.4}
	Assume {\bf ($V_1$)}, {\bf ($V_3$)}, {\bf ($V_4$)} and {\bf ($f_1$)} hold true. Then, $p_V>0$.
\end{lemma}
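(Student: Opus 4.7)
The plan is to express $I_V(u)$ for $u\in\mathcal{P}_V$ purely in terms of $\|\nabla u\|_2^2$ (up to a controlled error involving $\nabla V(x)\cdot x$), bound the error by a fraction of $\|\nabla u\|_2^2$ using $(V_3)$, and then invoke Lemma \ref{lema4.3} to close the estimate from below.

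Concretely, I would first use $J_V(u)=0$ to eliminate $\int F(u)\,dx$ from $I_V(u)$. A short algebraic manipulation gives
\[
I_V(u)=\frac{1}{N}\int_{\mathbb{R}^N}|\nabla u|^2\,dx-\frac{1}{2N}\int_{\mathbb{R}^N}\nabla V(x)\cdot x\,u^2\,dx,
\]
which is exactly the identity already exploited in the proof of Lemma \ref{lema3.2}. Next I would control the second integral by $W^+(x)u^2$, apply Hölder (with exponents $N/2$ and $N/(N-2)$), and then the Gagliardo--Nirenberg--Sobolev inequality \eqref{desigual}:
\[
\int_{\mathbb{R}^N}\nabla V(x)\cdot x\,u^2\,dx\leq\|W^+\|_{N/2}\,\|u\|_{2^*}^2\leq S^{-1}\|W^+\|_{N/2}\,\|\nabla u\|_2^2.
\]
Hypothesis $(V_3)$ says $\|W^+\|_{N/2}<S$, so setting $\alpha:=S^{-1}\|W^+\|_{N/2}<1$ yields
\[
I_V(u)\geq\frac{1}{N}\Bigl(1-\frac{\alpha}{2}\Bigr)\|\nabla u\|_2^2\geq\frac{1}{2N}\|\nabla u\|_2^2,
\]
valid for every $u\in\mathcal{P}_V$.

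Finally I would apply Lemma \ref{lema4.3} (whose hypotheses $(V_1),(V_4),(f_1)$ are already assumed here) to obtain the uniform lower bound $\|\nabla u\|_2\geq\rho>0$ on $\mathcal{P}_V$. Combining the two estimates gives $I_V(u)\geq\rho^2/(2N)$ for every $u\in\mathcal{P}_V$, so that
\[
p_V=\inf_{u\in\mathcal{P}_V}I_V(u)\geq\frac{\rho^2}{2N}>0.
\]

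There is really no serious obstacle here: the entire argument is a one-line algebraic rewriting followed by the Sobolev/Hölder estimate already contained in Lemma \ref{lema3.2}, plus the a priori gradient bound of Lemma \ref{lema4.3}. The only thing to be mildly careful about is making sure the constant $1-\alpha/2$ is strictly positive, which is guaranteed by the strict inequality in $(V_3)$; this strictness is what upgrades the nonnegativity of $I_V$ on $\mathcal{P}_V$ to the desired positivity of the infimum.
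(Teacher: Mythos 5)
Your argument is correct and is essentially the paper's own proof: both rewrite $I_V(u)$ on $\mathcal{P}_V$ via the Pohozaev identity to eliminate $\int F(u)$, bound $\int \nabla V\cdot x\,u^2$ by $\|W^+\|_{N/2}\|\nabla u\|_2^2/S$ using H\"older and Sobolev together with the strict inequality in $(V_3)$, and close with the gradient lower bound $\rho$ from Lemma \ref{lema4.3}. No substantive difference.
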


\begin{proof}
	Let $u \in \mathcal{P}_V$, then by $(\ref{2.5})$
	\begin{eqnarray*}
		\frac{N-2}{2}\!\int_{\mathbb{R}^N}|\nabla u|^2dx &=& N\!\int_{\mathbb{R}^N}F(u)dx-\frac{N}{2}\!\int_{\mathbb{R}^N}\left(\frac{\nabla V(x) \cdot x}{N}+V(x)\!\right)\,\!u^2dx.
	\end{eqnarray*} 
	H\"older inequality, condition {\bf ($V_3$)} and the constant $\rho > 0$ obtained in Lemma \ref{lema4.3}, yield
	\begin{eqnarray*}
		I_V(u) 
		&\geq& \frac{1}{N}\!\int_{\mathbb{R}^N}|\nabla u|^2dx - \frac{1}{2N}\!\left(\int_{\mathbb{R}^N}\left|W^+(x)\right|^{N/2}dx\right)^{2/N}\!\left(\int_{\mathbb{R}^N}\left|u^2\right|^{2^*/2}dx\right)^{2/2^*} \\
	&\geq& \frac{1}{2N}\!\int_{\mathbb{R}^N}|\nabla u|^2dx \geq \frac{1}{2N}\rho^2 > 0.
	\end{eqnarray*}
	Since $u \in \mathcal{P}_V$ was taken arbitrarily, it follows that $p_V > 0$.
\end{proof}

In the next lemmas we will prove that $p_V \leq c_V \leq c_0 = m = p_0$, where $m$ is defined by (\ref{m}) and
\begin{equation}\label{c_0}
c_0:=\displaystyle\inf_{\gamma \, \in \, \Gamma_0}\displaystyle\max_{0 \, \leq \, t \, \leq \, 1}I_0(\gamma(t)), \quad \Gamma_0:=\left\{\gamma \in C\!\left([0,1],\mathcal{D}^{1,2}\!(\mathbb{R}^N)\!\right): \gamma(0)=0, I_0(\gamma(1))<0\right\};
\end{equation}
\begin{equation}\label{c_V}
c_V:=\displaystyle\inf_{\gamma \, \in \, \Gamma_V}\displaystyle\max_{0 \, \leq \, t \, \leq \, 1}I_V(\gamma(t)), \quad \Gamma_V:=\left\{\gamma \in C\!\left([0,1],\mathcal{D}^{1,2}\!(\mathbb{R}^N)\!\right): \gamma(0)=0, I_V(\gamma(1))<0\right\}.
\end{equation}

\begin{lemma}\label{lema4.10}
	Assume that {\bf ($V_1$)} and {\bf ($f_1$)--($f_2$)} hold true. Then, $c_0 \geq c_V$.
\end{lemma}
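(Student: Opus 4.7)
The strategy is to take an arbitrary mountain pass path $\gamma\in\Gamma_0$ for the limit functional $I_0$ and translate it far from the origin to obtain a path $\gamma_y\in\Gamma_V$ whose maximum energy exceeds $\max_t I_0(\gamma(t))$ by an arbitrarily small amount. The heuristic is that $I_0$ is translation invariant while the only term distinguishing $I_V$ from $I_0$, namely $\tfrac12\!\int V u^2\,\dx$, can be made uniformly small along the translated path because $V(x)\to 0$ as $|x|\to\infty$ by Remark \ref{obs1.2}.

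Concretely, for $y\in\mathbb{R}^N$ I set $\gamma_y(t)(\cdot):=\gamma(t)(\cdot-y)$, which is still continuous from $[0,1]$ into $\mathcal{D}^{1,2}(\mathbb{R}^N)$ and satisfies $\gamma_y(0)=0$. Translation invariance of $I_0$ gives
\[
I_V(\gamma_y(t))-I_0(\gamma(t))\;=\;\frac{1}{2}\!\int_{\mathbb{R}^N}\! V(z+y)\,\gamma(t)(z)^2\,\dz \;=:\;R_y(t).
\]
H\"older's inequality, combined with $V\in L^{N/2}(\mathbb{R}^N)$ (Remark \ref{obs1.2}) and the Sobolev embedding $\mathcal{D}^{1,2}\hookrightarrow L^{2^*}$, yields $|R_y(t)|\le\tfrac12\Vert V\Vert_{N/2}\Vert\gamma(t)\Vert_{2^*}^{2}$, which is uniformly bounded in $t\in[0,1]$.

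The step that requires genuine work is showing $R_y(t)\to 0$ \emph{uniformly} in $t\in[0,1]$ as $|y|\to\infty$. Continuity of $\gamma$ makes $\gamma([0,1])$ compact in $\mathcal{D}^{1,2}(\mathbb{R}^N)$, and the squaring map $u\mapsto u^2$ is continuous from $L^{2^*}$ to $L^{N/(N-2)}$, so $\mathcal{K}:=\{\gamma(t)^2:t\in[0,1]\}$ is a compact subset of $L^{N/(N-2)}(\mathbb{R}^N)$. For each fixed $g\in L^{N/(N-2)}$ one has $\int V(z+y)g(z)\,\dz\to 0$: approximate $g$ by compactly supported $g_R$ and use that, since $V\in L^{N/2}$, $\Vert V(\cdot+y)\Vert_{L^{N/2}(\mathrm{supp}\,g_R)}\to 0$ as $|y|\to\infty$. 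A standard finite-cover argument on $\mathcal{K}$ then upgrades this pointwise limit to a uniform one.

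Given $\varepsilon>0$ and $\gamma\in\Gamma_0$, I then choose $|y|$ so large that $\sup_t|R_y(t)|<\varepsilon$ and, simultaneously, $I_V(\gamma_y(1))<I_0(\gamma(1))+\varepsilon<0$ (shrinking $\varepsilon$ if needed), so that $\gamma_y\in\Gamma_V$. Hence
\[
c_V \;\le\; \max_{t\in[0,1]}I_V(\gamma_y(t)) \;\le\; \max_{t\in[0,1]}I_0(\gamma(t))+\varepsilon,
\]
and passing to $\inf_{\gamma\in\Gamma_0}$ and then letting $\varepsilon\to 0$ yields $c_V\le c_0$, i.e.\ $c_0\ge c_V$. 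The only non-routine ingredient is the uniform vanishing of $R_y$ on $[0,1]$, which the compactness argument above resolves; everything else follows from translation invariance of $I_0$ together with the decay of $V$.
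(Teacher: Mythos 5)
Your argument is correct and follows the paper's strategy exactly: translate an almost-optimal $\Gamma_0$-path far from the origin and control the perturbation term $\tfrac12\int V(\cdot+y)\gamma(t)^2$ using the decay of $V$. You are in fact more careful than the paper at the one delicate point, namely that the smallness of this term must be \emph{uniform} in $t\in[0,1]$ (since the maximizer $t_y$ depends on $y$); the paper tacitly assumes this, while you supply the missing justification via compactness of $\gamma([0,1])$ in $\mathcal{D}^{1,2}$, continuity of $u\mapsto u^2$ into $L^{N/(N-2)}$, and a finite-cover argument against the conjugate space $L^{N/2}\ni V$.
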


\begin{proof}
	Let $\varepsilon>0$ be given arbitrarily. We know there exists $\gamma \in \Gamma_0$ such that $I_0(\gamma(t)) \leq c_0+\varepsilon/2$, for all $t \in [0,1]$. Consider the translation $\tau_y(\gamma(t))x:=\gamma(t)(x-y)$ for $y \in \mathbb{R}^N$ chosen, such that $|y|$ is sufficiently large. Thus, using the hypothesis {\bf ($V_1$)} and the Lebesgue dominated convergence theorem, we have
	\begin{eqnarray*}
		I_V\left(\tau_y \circ \gamma(1)\right) = I_0\left(\tau_y \circ \gamma(1)\right) + \int_{\mathbb{R}^N}V(x+y)\left(\gamma(1)\right)^2dx 
		= I_0(\gamma(1)) + o_y(1) < 0, 
	\end{eqnarray*}
	proving that $\tau_y \circ \gamma \in \Gamma_V$. Let $t_y \in [0,1]$ be such that $$I_V\left(\tau_y \circ \gamma\left(t_y\right)\right):=\max_{0 \, \leq \, t \, \leq \, 1}I_V\left(\tau_y \circ \gamma(t)\right)\;\; \text{and}\;\;\; I_V\left(\tau_y \circ \gamma\left(t_y\right)\right) \leq I_0\left(\tau_y \circ \gamma\left(t_y\right)\right) + \frac{\varepsilon}{2}.$$ Then, 
	\begin{eqnarray*}
		c_0+\varepsilon &\geq& I_0\left(\gamma\left(t_y\right)\right) + \frac{\varepsilon}{2} = I_0\left(\tau_y \circ \gamma\left(t_y\right)\right) + \frac{\varepsilon}{2} \geq I_V\left(\tau_y \circ \gamma\left(t_y\right)\right) \\
		&=& \max_{0 \, \leq \, t \, \leq \, 1}I_V\left(\tau_y \circ \gamma(t)\right) \geq \inf_{\gamma \, \in \, \Gamma_V}\max_{0 \, \leq \, t \, \leq \, 1}I_V(\gamma(t)) = c_V.
	\end{eqnarray*}
	Since $\varepsilon>0$ is arbitrary, it follows that $c_0 \geq c_V$.
\end{proof}

Now, let us present a property of intersection of $\mathcal{P}_V$ with the rescaling of the paths in the Mountain Pass Theorem \cite{Amb-Rab}.

\begin{lemma}\label{lema4.11}
	Assume that {($V_1$)--($V_4$)} and {($f_1$)-($f_2$)} hold true. Then, for every $\gamma \in \Gamma_V$, there exists $t_\gamma \in (0,1)$ such that $\gamma (t_\gamma) \in \mathcal{P}_V$. In particular, one has $c_V \geq p_V$.
\end{lemma}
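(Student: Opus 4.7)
The plan is to locate a crossing of the path $\gamma$ with $\mathcal{P}_V$ by applying the intermediate value theorem to the continuous function $h(t):=J_V(\gamma(t))$. The crucial ingredient is a one-sided sign comparison: for every $u\in\mathcal{D}^{1,2}(\mathbb{R}^N)\setminus\{0\}$ with $I_V(u)<0$, one has $J_V(u)<0$. To prove this, I would subtract $N\,I_V(u)$ from $J_V(u)$: the $\int F(u)\,dx$ and $\int V u^2\,dx$ contributions cancel pairwise, leaving the identity
\begin{equation*}
J_V(u) \;=\; N\,I_V(u) \;-\; \int_{\mathbb{R}^N}|\nabla u|^2\,dx \;+\; \frac{1}{2}\int_{\mathbb{R}^N}\nabla V(x)\cdot x\; u^2\,dx.
\end{equation*}
Since $\nabla V(x)\cdot x\leq W^+(x)$ pointwise, H\"older's inequality with exponents $N/2$ and $2^*/2$, Sobolev's inequality $S\Vert u\Vert_{2^*}^2\leq\Vert\nabla u\Vert_2^2$, and hypothesis $(V_3)$ (which gives $\bigl(\int|W^+|^{N/2}\,dx\bigr)^{2/N}<S$) combine to yield $\int \nabla V(x)\cdot x\, u^2\,dx<\Vert\nabla u\Vert_2^2$, strictly whenever $u\neq 0$. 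Substituting into the identity gives $J_V(u)< N\,I_V(u)-\tfrac{1}{2}\Vert\nabla u\Vert_2^2<0$ as soon as $I_V(u)<0$.

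Now fix $\gamma\in\Gamma_V$. By the previous step applied at $t=1$, we get $J_V(\gamma(1))<0$. Set $t_0:=\sup\{t\in[0,1):\gamma(t)=0\}$; this set contains $0$, and continuity of $\gamma$ together with $\gamma(1)\neq 0$ ensure $t_0<1$ and $\gamma(t_0)=0$, while $\gamma(t)\neq 0$ for every $t\in(t_0,1]$. By Proposition \ref{lema3.1}(a)--(b), there is $\varrho>0$ such that $J_V(u)>0$ for every $0<\Vert u\Vert_V<\varrho$. Continuity of $\gamma$ at $t_0$ then produces $t_*\in(t_0,1)$ with $0<\Vert\gamma(t_*)\Vert_V<\varrho$, hence $h(t_*)=J_V(\gamma(t_*))>0$. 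Applying the intermediate value theorem to $h$ on $[t_*,1]$ yields $t_\gamma\in(t_*,1)\subset(0,1)$ with $h(t_\gamma)=0$; since $t_\gamma>t_0$ we have $\gamma(t_\gamma)\neq 0$, so $\gamma(t_\gamma)\in\mathcal{P}_V$.

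Finally, the inequality $c_V\geq p_V$ follows at once: for every $\gamma\in\Gamma_V$ one has $\max_{t\in[0,1]}I_V(\gamma(t))\geq I_V(\gamma(t_\gamma))\geq p_V$, and taking the infimum over $\gamma\in\Gamma_V$ completes the proof. The main obstacle is the sign implication in the first step; hypothesis $(V_3)$ is engineered precisely so that the cross term $\int\nabla V(x)\cdot x\,u^2\,dx$ can be absorbed into $\Vert\nabla u\Vert_2^2$, furnishing the needed strict inequality. Once this is established, the rest is a direct IVT argument, with care taken to skip any initial segment on which $\gamma$ vanishes.
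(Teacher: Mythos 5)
Your proof is correct and follows essentially the same route as the paper: you use the identity $J_V(u)=N I_V(u)-\|\nabla u\|_2^2+\tfrac12\int\nabla V\cdot x\,u^2$, control the cross term via $(V_3)$ and Sobolev to get $J_V(u)\le N I_V(u)-\tfrac12\|\nabla u\|_2^2$, deduce $J_V(\gamma(1))<0$, and combine with the positivity of $J_V$ near $0$ from Proposition \ref{lema3.1} and the IVT. The only difference is that you make explicit the bookkeeping needed to rule out $\gamma$ vanishing at interior times, which the paper leaves implicit; the core mechanism is identical.
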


\begin{proof}
	Arguing as in the proof of Proposition \ref{lema3.1}(b), we obtain $\varrho > 0$ such that if $u \in  \mathcal{D}^{1,2}(\mathbb{R}^N)$, with $0 < \Vert u \Vert_V < \varrho$, then $J_V(u) > 0$. Moreover, we observe that
	\begin{eqnarray*}
		J_V(u) 
	&\leq& NI_V(u) - \int_{\mathbb{R}^N}|\nabla u|^2dx + \frac{S}{2}\left(\int_{\mathbb{R}^N}|u|^{2^*}dx\right)^{\!2/2^*} \\
	&\leq& NI_V(u) - \frac{1}{2}\int_{\mathbb{R}^N}|\nabla u|^2dx, \qquad \forall \, u \in \mathcal{D}^{1,2}(\mathbb{R}^N).
	\end{eqnarray*}
	Hence, for each path $\gamma \in \Gamma_V$, we have $J_V(\gamma(0))=0$ and $J_V(\gamma(1)) \leq NI_V(\gamma(1))<0$. By continuity of $J_V$, there exists $t_\gamma \in (0,1)$ such that $\Vert \gamma(t_\gamma) \Vert_V \geq \varrho$ and $J_V(\gamma(t_\gamma)) = 0$, proving that $\gamma(t_\gamma) \in \mathcal{P}_V$. In particular, we have $\displaystyle\max_{0 \, \leq \, t \, \leq \, 1}I_V(\gamma(t)) \geq I_V(\gamma(t_\gamma))$ and so $c_V \geq p_V$.
\end{proof}
Now, using the previous results and some new results by Mederski in \cite{Mederski} (Theorem 1.1), we are ready to obtain the following inequality.
\begin{lemma}\label{lema4.12}
	Assume that {($V_1$)--($V_4$)} and { ($f_1$)--($f_2$)} hold true. Then, $p_V \leq p_0$.
\end{lemma}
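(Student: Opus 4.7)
The plan is to chain the inequalities already established in the two previous lemmas with the identification of the mountain-pass level of the limit problem with its Pohozaev infimum. First, by Lemma \ref{lema4.11}, every path $\gamma \in \Gamma_V$ meets $\mathcal{P}_V$, so the mountain-pass level satisfies $c_V \geq p_V$. Second, by Lemma \ref{lema4.10}, translating admissible paths for the limit problem far from the origin and using $V(x) \to 0$ from Remark \ref{obs1.2} gives $c_V \leq c_0$. Chained together, these produce $p_V \leq c_V \leq c_0$.

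The remaining step is to identify $c_0$ with $p_0$. The excerpt has already recorded $m = p_0$ from \cite{Mederski}, where $m$ is the ground-state energy of \eqref{P_0}, so it suffices to obtain $c_0 = m$. This is precisely the mountain-pass/ground-state equivalence established in Mederski's Theorem 1.1 under hypotheses $(f_1)$--$(f_2)$, which is the extra input alluded to just before the lemma statement; combining yields $p_V \leq c_V \leq c_0 = m = p_0$, which is the claim.

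If $c_0 = m$ were not available as a direct citation, the weaker inequality $c_0 \leq p_0$ would suffice and can be produced by hand. Let $w$ be the positive radial ground state of \eqref{P_0} and set $w_\sigma(x) := w(x/\sigma)$ for $\sigma > 0$, extended by $w_0 := 0$. Then
\[
I_0(w_\sigma) = \tfrac{1}{2}\sigma^{N-2}\|\nabla w\|_2^2 - \sigma^N\!\int_{\mathbb{R}^N}\!F(w)\,dx,
\]
and since $w \in \mathcal{P}_0$ yields $\tfrac{N-2}{2}\|\nabla w\|_2^2 = N\int F(w)\,dx$ (in particular $\int F(w) > 0$), this expression is maximized at $\sigma = 1$ with value $I_0(w) = p_0$, is continuous with $I_0(w_0) = 0$, and tends to $-\infty$ as $\sigma \to \infty$. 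After a trivial reparametrization onto $[0,1]$, the curve $\sigma \mapsto w_\sigma$ defines a path in $\Gamma_0$ whose maximum $I_0$-value equals $p_0$, giving $c_0 \leq p_0$. I expect the principal bookkeeping step to be checking that the cited $c_0 = m$ truly fits our precise hypotheses on $f$; the scaling substitute above serves as a self-contained safety net if that concern arises.
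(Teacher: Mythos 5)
Your proposal is correct and follows essentially the same chain of inequalities as the paper's proof: $p_V \leq c_V$ from Lemma \ref{lema4.11}, $c_V \leq c_0$ from Lemma \ref{lema4.10}, and $c_0 = m = p_0$ from Mederski's Theorem 1.1. Your self-contained backup showing $c_0 \leq p_0$ by dilating the ground state is a sound supplement but not a departure from the paper's route.
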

\begin{proof}
	Indeed, using the Lemmas \ref{lema4.4}, \ref{lema4.10} and \ref{lema4.11}, we have 
	$$0 < p_V \leq c_V \leq c_0 = m = p_0.$$
\end{proof}
We write $\nabla I_V(u)$ for the gradient of $I_V$ at $u$ with respect to the scalar product $\langle \cdot,\cdot\rangle_V$, and $\nabla_{\mathcal{P}_V}I_V(u)$ for its orthogonal projection onto the tangent space of $\mathcal{P}_V$ at $u$.

Recall that a sequence $(u_n)$ in $\mathcal{D}^{1,2}\!(\mathbb{R}^N)$ is said to be a \textit{(PS)$_d$-sequence for $I_V$ with $d \in \mathbb{R}$} if $I_V(u_n) \to d$ and $I^\prime_V(u_n) \to 0$ in $(\mathcal{D}^{1,2}(\mathbb{R}^N)^\prime$. A sequence $(u_n)$ in $\mathcal{P}_V$ is a \textit{(PS)$_d$-sequence for $I_V$ restricted to $\mathcal{P}_V$} if $I_V(u_n) \to d$ and $\Vert I_V|_{\mathcal{P}_V}^\prime(u_n) \Vert_{\left(\mathcal{D}^{1,2}\left(\mathbb{R}^N\right)\right)^\prime} \to 0$ or $\Vert \nabla_{\mathcal{P}_V}I_V(u) \Vert_{\left(\mathcal{D}^{1,2}\left(\mathbb{R}^N\right)\right)^\prime} \to 0$.

\begin{lemma}\label{lema4.13}
	Assume that {($V_1$)--($V_5$)} and {($f_1$)--($f_2$)} hold true. Let $(u_n) \subset \mathcal{P}_V$ be a (PS)$_d$-sequence for $I_V$ on $\mathcal{P}_V$. Then, $(u_n)$ is a (PS)$_d$-sequence for $I_V$ (free).
\end{lemma}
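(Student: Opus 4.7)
Since $\mathcal{P}_V$ is a $C^2$-submanifold of $\mathcal{D}^{1,2}(\mathbb{R}^N)$ (Proposition \ref{lema3.1}(c)) and $J'_V(u) \neq 0$ for every $u \in \mathcal{P}_V$ (Lemma \ref{coracao2}), the hypothesis $\|\nabla_{\mathcal{P}_V} I_V(u_n)\| \to 0$ provides Lagrange multipliers $\lambda_n \in \mathbb{R}$ with
\begin{equation*}
I'_V(u_n) - \lambda_n J'_V(u_n) = \varepsilon_n, \qquad \|\varepsilon_n\|_{(\mathcal{D}^{1,2}(\mathbb{R}^N))'} \to 0.
\end{equation*}
Because $(u_n)$ is bounded in $\mathcal{D}^{1,2}(\mathbb{R}^N)$ by Lemma \ref{lema3.2}, $\|J'_V(u_n)\|_{(\mathcal{D}^{1,2}(\mathbb{R}^N))'}$ is uniformly bounded; once I establish $\lambda_n \to 0$ I will get $\|I'_V(u_n)\|_{(\mathcal{D}^{1,2}(\mathbb{R}^N))'} \to 0$, which is the conclusion. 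Thus the task reduces to showing that the Lagrange multipliers vanish in the limit.

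\textbf{A Pohozaev-type identity for $\lambda_n$.} Viewed distributionally, the Lagrange equation reads
\begin{equation*}
-[1 - \lambda_n(N-2)]\Delta u_n + \bigl[(1-\lambda_n N)V(x) - \lambda_n \nabla V(x)\cdot x\bigr]u_n = (1 - \lambda_n N)f(u_n) + \varepsilon_n.
\end{equation*}
The idea is to multiply this equation by the scaling field $-x \cdot \nabla u_n$ and integrate by parts, producing a Pohozaev identity for the perturbed equation in the spirit of Proposition \ref{prop2.1}; subtracting $(1 - \lambda_n N)$ times the exact identity $J_V(u_n) = 0$ (valid because $u_n \in \mathcal{P}_V$) and using $[1 - \lambda_n(N-2)] - (1 - \lambda_n N) = 2\lambda_n$, the terms containing $V(x)$ and $F(u_n)$ all cancel and one is left with
\begin{equation*}
\lambda_n\,\Theta(u_n) = \bigl\langle \varepsilon_n,\, x \cdot \nabla u_n\bigr\rangle,
\end{equation*}
where
\begin{equation*}
\Theta(u) := -(N-2)\|u\|^2 + \frac{N+1}{2}\int_{\mathbb{R}^N}(\nabla V \cdot x)\,u^2\,dx + \frac{1}{2}\int_{\mathbb{R}^N}xH(x)x\,u^2\,dx.
\end{equation*}
The strategy now mirrors Lemma \ref{coracao2}: writing
\begin{equation*}
\frac{N+1}{2}\nabla V \cdot x + \frac{1}{2}xHx = \frac{N}{2}\left(\nabla V \cdot x + \frac{xHx}{N}\right) + \frac{1}{2}\nabla V \cdot x,
\end{equation*}
and dominating the two parts by $K^+$ and $W^+$ respectively, H\"older and Sobolev combined with the \emph{strict} bounds in $(V_3)$ and $(V_5)$ yield a constant $\gamma > 0$ (depending only on $N$ and the gaps in those hypotheses) with $\Theta(u) \leq -\gamma \|u\|^2$ for every $u \in \mathcal{D}^{1,2}(\mathbb{R}^N) \setminus \{0\}$. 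Coupled with Lemma \ref{lema4.3}, this gives $\Theta(u_n) \leq -\gamma \rho^2 < 0$ uniformly in $n$, so once the right-hand side of the displayed identity is $o(1)$ the conclusion $\lambda_n \to 0$ is immediate.

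\textbf{Main obstacle.} The delicate step is the rigorization of the pairing $\langle \varepsilon_n,\, x \cdot \nabla u_n\rangle$, because $x \cdot \nabla u_n$ need not belong to $\mathcal{D}^{1,2}(\mathbb{R}^N)$ and hence the duality bracket with $\varepsilon_n \in (\mathcal{D}^{1,2}(\mathbb{R}^N))'$ is not a priori well-defined. I would handle this by a standard cutoff: fix $\eta \in C_c^\infty(\mathbb{R}^N)$ with $\eta \equiv 1$ on $B_1$ and $\eta \equiv 0$ off $B_2$, set $\phi_{n,R}(x) := \eta(x/R)\,x \cdot \nabla u_n(x) \in \mathcal{D}^{1,2}(\mathbb{R}^N)$, and perform the integration by parts leading to the identity above on the bounded domain $B_{2R}$. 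The truncation and boundary corrections are controlled by the a priori bound on $\|u_n\|_V$ together with the decay built into $(V_2)$ and $(V_4)$, while enough regularity of $u_n$ to justify the computation (for instance $u_n \in W^{2,p}_{\mathrm{loc}}$) follows from standard elliptic theory applied to the perturbed equation. Passing to the limit $R \to \infty$ and using $\|\varepsilon_n\|_{(\mathcal{D}^{1,2}(\mathbb{R}^N))'} \to 0$ then forces $\lambda_n \Theta(u_n) = o(1)$, so $\lambda_n \to 0$, which completes the proof.
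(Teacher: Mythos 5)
Your overall plan mirrors the paper's: express the Lagrange condition as an approximate Euler-Lagrange equation, compute its Pohozaev functional, subtract the exact constraint $J_V(u_n)=0$ so that the $V$- and $F$-terms cancel, and isolate $\lambda_n$ multiplied by a quantity which $(V_3)$, $(V_5)$ and Lemma~\ref{lema4.3} force to be bounded away from zero. In fact your $\Theta$ recovers the correct algebra, whereas the paper's formula following \eqref{4.5} omits the term $\tfrac{\lambda_n}{2}\nabla V\cdot x$ coming from $\partial_{x_i}(\nabla V\cdot x)=V_{x_i}+\sum_j V_{x_i x_j}x_j$; this changes the coefficient $N$ in \eqref{4.7} to $N+1$, matching the $\tfrac{N+1}{2}$ in your $\Theta$, but leaves the conclusion intact. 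The genuine difference with the paper is in the treatment of the Lagrange residual: you keep it as an additive distributional error $\varepsilon_n$ and try to control the pairing $\langle\varepsilon_n, x\cdot\nabla u_n\rangle$, whereas the paper replaces $\varepsilon_n$ by the scalar $\alpha_n:=\langle\varepsilon_n,u_n\rangle/\|\nabla u_n\|_2^2$ absorbed into the Laplacian coefficient of the auxiliary equation \eqref{4.5}, and then invokes the exact Pohozaev relation $\widetilde J_V(u_n)=0$ for that equation, bypassing the pairing of $\varepsilon_n$ with the scaling field entirely.

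The gap in your proposal is precisely at the step you flag as delicate, and the cutoff does not close it. The $\mathcal{D}^{1,2}$-norm of $\phi_{n,R}=\eta(\cdot/R)\,x\cdot\nabla u_n$ is controlled by $\|\nabla\phi_{n,R}\|_2$, which contains the term $\eta(\cdot/R)\,x\cdot\nabla^2 u_n$; on $B_{2R}$ this carries an extra factor $|x|\le 2R$, so $\|\phi_{n,R}\|_{\mathcal{D}^{1,2}}$ grows roughly like $R\,\|\nabla^2 u_n\|_{L^2(B_{2R})}$ and is \emph{not} uniformly bounded in $n$ and $R$. Consequently the estimate $|\langle\varepsilon_n,\phi_{n,R}\rangle|\le\|\varepsilon_n\|_{(\mathcal{D}^{1,2})'}\|\phi_{n,R}\|_{\mathcal{D}^{1,2}}$ does not yield $o(1)$ as $n\to\infty$ uniformly in $R$, and sending $R\to\infty$ first makes the bound blow up rather than help. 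Moreover the truncation and boundary corrections in the Rellich computation are governed by integrals like $\int_{B_{2R}\setminus B_R}|\nabla u_n|^2\,dx$ and $R^{-1}\int_{B_{2R}\setminus B_R}|x||\nabla u_n|^2\,dx$, not by the decay of $V$ built into $(V_2)$ and $(V_4)$ as you claim; without quantitative pointwise decay of $u_n$ and $\nabla u_n$ (which is not available for a general $(PS)$-sequence in $\mathcal{D}^{1,2}$), these go to zero only along a diagonal sequence and not uniformly in $n$. So your argument does not actually deliver $\lambda_n\Theta(u_n)=o(1)$. The paper's device of passing to $\alpha_n$ is designed exactly to avoid pairing $\varepsilon_n$ with $x\cdot\nabla u_n$; if you want to proceed along your route you would need either a regularization of $\varepsilon_n$, or uniform higher integrability/decay of the approximate solutions, neither of which you supply.
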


\begin{proof}
	By definition, we have $I_V(u_n) \to d$ and $I_V|_{\mathcal{P}_V}'(u_n) \to 0$, i.e. $I_V'(u_n) + \lambda_nJ_V'(u_n) \to 0$, where $(\lambda_n)$ is a sequence of real numbers and $d>0$, from Lemma \eqref{lema4.4}. Let us show that $\Vert J_V'(u_n) \Vert_{\left(\mathcal{D}^{1,2}\left(\mathbb{R}^N\right)\right)'}$ is bounded and $\lambda_n \to 0$. Thus, $I_V'(u_n) \to 0$ in $(\mathcal{D}^{1,2}(\mathbb{R}^N))^\prime$ and so $(u_n)$ is (PS)$_d$-sequence for $I_V$ (free). Indeed, since $(u_n)$ is bounded, by Lemma \ref{lema3.2}, it follows that $J_V'(u_n)$ is bounded. Furthermore, $J_V'(u_n) \not = 0$ by Lemma \ref{coracao2}.
	
	Now, let us show that $\lambda_n \to 0$. By Lemma \ref{lema4.3}, there exists a constant $\rho > 0$ such that $\|\nabla u_n\|^2_2\geq \rho^2$, for every $n \in \mathbb{N}$. Since $(u_n)$ is bounded in $\mathcal{D}^{1,2}\!(\mathbb{R}^N)$, there exists $(\alpha_n) \subset \mathbb{R}$, with $\alpha_n \to 0$, such that $$I_V'(u_n)u_n + \lambda_nJ_V'(u_n)u_n = \alpha_n\!\int_{\mathbb{R}^N}|\nabla u_n|^2dx,$$ where $\alpha_n:=\dfrac{I'_V|_{\mathcal{P}_V}(u_n)u_n}{\int_{\mathbb{R}^N}|\nabla u_n|^2dx}$. That is,
	\begin{eqnarray*}
		\alpha_n\!\!\int_{\mathbb{R}^N}\!|\nabla u_n|^2dx
		&=& (1+\lambda_n (N-2))\!\!\int_{\mathbb{R}^N}|\nabla u_n|^2dx+(1+\lambda_n N)\!\int_{\mathbb{R}^N}V(x)u_n^2dx \\
		&& + \lambda_n\!\int_{\mathbb{R}^N}\nabla V(x) \cdot xu_n^2dx-(1+\lambda_n N)\!\int_{\mathbb{R}^N}f(u_n)u_n\,dx.
	\end{eqnarray*}
	Hence, we have
	\begin{eqnarray*}
		0 &=& (1-\alpha_n+\lambda_n (N-2))\!\!\int_{\mathbb{R}^N}|\nabla u_n|^2dx+(1+\lambda_n N)\!\int_{\mathbb{R}^N}V(x)u_n^2dx \\
		&& + \lambda_n\!\int_{\mathbb{R}^N}\nabla V(x) \cdot xu_n^2dx-(1+\lambda_n N)\!\int_{\mathbb{R}^N}f(u_n)u_n\,dx.
	\end{eqnarray*}
	Note that the above expression can be associated with the equation
	\begin{equation}\label{4.5}
	-(1-\alpha_n+\lambda_n (N-2))\Delta v+(1+\lambda_n N)V(x)v+\lambda_n\nabla V(x) \cdot x\;v=(1+\lambda_n N)f(v),
	\end{equation}
	$v \in \mathcal{D}^{1,2}\!(\mathbb{R}^N)$. Moreover, the solutions of the equation (\ref{4.5}) satisfy a Pohozaev identity and admit an associated Pohozaev manifold, defined by $\widetilde{J}_V^{-1}(\{0\})$, where
	\begin{eqnarray*}
		\widetilde{J}_V(v)=\frac{(1-\alpha_n+\lambda_n (N-2))(N-2)}{2}\!\!\int_{\mathbb{R}^N}|\nabla v|^2dx-N\!\!\int_{\mathbb{R}^N}\widetilde{G}(x,v)dx-\displaystyle\sum_{i=1}^{N}\!\int_{\mathbb{R}^N}\widetilde{G}_{x_i}(x,v)x_idx,
	\end{eqnarray*}
	with $$\widetilde{G}(x,v)=-\frac{1+\lambda_n N}{2}V(x)v^2-\frac{\lambda_n}{2}\nabla V(x) \cdot x\;v^2+(1+\lambda_n N)F(v)$$ and $$\displaystyle\sum_{i=1}^{N}\int_{\mathbb{R}^N}\widetilde{G}_{x_i}(x,v)x_idx=-\frac{1+\lambda_n N}{2}\!\int_{\mathbb{R}^N}\nabla V(x) \cdot x\;v^2dx-\frac{\lambda_n}{2}\!\int_{\mathbb{R}^N}xH(x)x\;v^2dx,$$ where $H(x)$ denotes the Hessian matrix of $V(x)$. Simplifying, it follows that
	\begin{eqnarray}\label{4.6}
	\widetilde{J}_V(v) \!\!\!&=&\!\!\!\! \frac{(1-\alpha_n+\lambda_n (N-2))(N-2)}{2}\!\!\int_{\mathbb{R}^N}|\nabla v|^2dx\nonumber 
	 + \frac{N(1+\lambda_n N)}{2}\!\!\int_{\mathbb{R}^N}\left[V(x)+\frac{\nabla V(x) \cdot x}{N}\right]\!v^2dx\nonumber \\
	&& + \frac{\lambda_n N}{2}\!\!\int_{\mathbb{R}^N}\!\left[\nabla V(x) \cdot x + \frac{xH(x)x}{N}\right]\!v^2dx
	 - N(1+\lambda_n N)\!\!\int_{\mathbb{R}^N}F(v)dx.
	\end{eqnarray}
	
	Making $v=u_n$ in (\ref{4.6}) and since $u_n \in \mathcal{P}_V$, we have
	\begin{equation*}
	N\!\!\int_{\mathbb{R}^N}\left[V(x)+\frac{\nabla V(x) \cdot x}{N}\right]\!u_n^2dx-2N\!\!\int_{\mathbb{R}^N}F(u_n)dx = -(N-2)\!\!\int_{\mathbb{R}^N}|\nabla u_n|^2dx,
	\end{equation*}
	and so
	\begin{eqnarray*}
		\widetilde{J}_V(u_n) &=& \frac{(1-\alpha_n+\lambda_n (N-2))(N-2)}{2}\!\!\int_{\mathbb{R}^N}|\nabla u_n|^2dx - \frac{(1+\lambda_n N)(N-2)}{2}\!\!\int_{\mathbb{R}^N}|\nabla u_n|^2dx \\ 
		&& + \frac{\lambda_n N}{2}\!\!\int_{\mathbb{R}^N}\left[\nabla V(x) \cdot x + \frac{xH(x)x}{N}\right]\!u_n^2dx \\
		&=& -\left(\!\frac{\alpha_n+2\lambda_n}{2}\!\right)\!(N-2)\!\!\int_{\mathbb{R}^N}|\nabla u_n|^2dx + \frac{\lambda_n N}{2}\!\!\int_{\mathbb{R}^N}\left[\nabla V(x) \cdot x + \frac{xH(x)x}{N}\right]\!u_n^2dx.
	\end{eqnarray*}
	On the other hand, we have that $u_n$ is a solution of the equation (\ref{4.5}), and thus $\widetilde{J}_V(u_n)=0$. Then,
	\begin{eqnarray*}
		\frac{(\alpha_n+2\lambda_n)(N-2)}{N}\!\int_{\mathbb{R}^N}|\nabla u_n|^2dx = \lambda_n\!\int_{\mathbb{R}^N}\left[\nabla V(x) \cdot x + \frac{xH(x)x}{N}\right]\!u_n^2dx,
	\end{eqnarray*}
	or equivalently,
	\begin{equation}\label{4.7}
	\alpha_n(N-2)\!\!\int_{\mathbb{R}^N}|\nabla u_n|^2dx = \lambda_n\!\left[N\!\!\int_{\mathbb{R}^N}\left(\nabla V(x) \cdot x + \frac{xH(x)x}{N}\right)\!u_n^2dx - 2(N-2)\!\!\int_{\mathbb{R}^N}|\nabla u_n|^2dx\right].
	\end{equation}
	Note that, using H\"older's inequality and hypothesis {\bf ($V_5$)}, it holds 
	\begin{eqnarray}\label{4.8}
		N\int_{\mathbb{R}^N}\left(\nabla V(x) \cdot x + \frac{xH(x)x}{N}\right)u_n^2dx {}\nonumber &\leq& N \int_{\mathbb{R}^N}K^+(x)u_n^2dx 
		< \frac{2NS}{2^*}\left(\int_{\mathbb{R}^N}|u_n|^{2^*}dx\right)^{2/2^*}\\ 
	&\leq& (N-2)\int_{\mathbb{R}^N}|\nabla u_n|^2dx.
	\end{eqnarray}
So it follows from (\ref{4.8}) that
	\begin{eqnarray*}
	N\int_{\mathbb{R}^N}\left(\nabla V(x) \cdot x + \frac{xH(x)x}{N}\!\right)u_n^2dx - 2(N-2)\int_{\mathbb{R}^N}|\nabla u_n|^2dx 
	 &<& -(N-2)\int_{\mathbb{R}^N}|\nabla u_n|^2dx\\ \leq (2-N)\rho^2 < 0,
	\end{eqnarray*}
	which means that the bracket term in (\ref{4.7}) is bounded above by a strictly negative constant. Therefore, taking $n \to \infty$ in (\ref{4.7}), it follows that $\lambda_n \to 0$, proving the claim.
\end{proof}

\begin{corollary}\label{cor4.14}
	Assume that {\bf ($V_1$)--($V_5$)} and {\bf ($f_1$)--($f_2$)} hold true. Then, $\mathcal{P}_V$ is a natural cons-traint of problem $(\ref{1.1})$ for $I_V$.
\end{corollary}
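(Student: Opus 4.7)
The plan is to exploit Lemma \ref{lema4.13}, which has done essentially all of the work: to say $\mathcal{P}_V$ is a natural constraint for $I_V$ is to say that every critical point of the restriction $I_V|_{\mathcal{P}_V}$ is automatically a critical point of the free functional $I_V$ on $\mathcal{D}^{1,2}(\mathbb{R}^N)$, and this property will come out as a one-line corollary of the (PS) equivalence just established.

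Concretely, I would argue as follows. Let $u \in \mathcal{P}_V$ be an arbitrary critical point of $I_V|_{\mathcal{P}_V}$, and set $d := I_V(u)$. Consider the constant sequence $u_n := u$ for all $n \in \mathbb{N}$. Since $u \in \mathcal{P}_V$ and by Lemma \ref{lema4.4} we have $p_V > 0$, it holds $d \geq p_V > 0$, so the implicit positivity hypothesis of Lemma \ref{lema4.13} is satisfied. Trivially, $I_V(u_n) \to d$ and $I_V|_{\mathcal{P}_V}'(u_n) = 0 \to 0$, so $(u_n)$ is a (PS)$_d$-sequence for $I_V$ restricted to $\mathcal{P}_V$.

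By Lemma \ref{lema4.13}, the sequence $(u_n)$ is then a (PS)$_d$-sequence for the free functional $I_V$, which means $I_V'(u_n) \to 0$ in $(\mathcal{D}^{1,2}(\mathbb{R}^N))'$. But because the sequence is constant, $I_V'(u_n) = I_V'(u)$ for every $n$, and therefore $I_V'(u) = 0$. Thus $u$ is a critical point of $I_V$ on the whole space $\mathcal{D}^{1,2}(\mathbb{R}^N)$, which is exactly the statement that $\mathcal{P}_V$ is a natural constraint for \eqref{1.1}.

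The genuine difficulty in proving such a statement is always controlling the Lagrange multiplier that appears when writing $I_V'(u) = \lambda\, J_V'(u)$ and showing $\lambda = 0$. In the present setup that whole analysis — involving the auxiliary equation \eqref{4.5}, its Pohozaev identity, and the strict sign condition supplied by \textbf{($V_5$)} together with the lower bound $\|\nabla u_n\|_2 \geq \rho$ from Lemma \ref{lema4.3} — has already been absorbed into Lemma \ref{lema4.13}. Consequently, there is no remaining obstacle and no further computation is required; the corollary is a short, direct consequence.
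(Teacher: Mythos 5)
Your proof is correct and takes essentially the same route as the paper: the authors set $\alpha_n=0$, $u_n=u$, $\lambda_n=\mu$ and repeat the argument of Lemma \ref{lema4.13} to conclude $\mu=0$, whereas you reach the identical conclusion by applying Lemma \ref{lema4.13} as a black box to the constant sequence $u_n\equiv u$. Your observation that $d=I_V(u)\geq p_V>0$ (from Lemma \ref{lema4.4}) correctly supplies the positivity needed to invoke the lemma, so the argument is complete.
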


\begin{proof}
	Let $u \in \mathcal{P}_V$ be a critical point of the functional $I_V$, constrained to the manifold $\mathcal{P}_V$. Since  $J_V'(u) \neq 0$, it follows from theorem of Lagrange multipliers that there exists $\mu \in \mathbb{R}$ such that $$I_V'(u)+\mu J_V'(u)=0.$$
	Note that the above expression can be associated with the equation
	\begin{equation}\label{4.9}
	-(1+\mu (N-2))\Delta v+(1+\mu N)V(x)v+\mu\nabla V(x) \cdot xv=(1+\mu N)f(v),
	\end{equation}
	by taking $\alpha_n = 0$, $u_n = u$ and $\lambda_n = \mu$ in equation $(\ref{4.5})$. Thus, arguing as in Lemma \ref{lema4.13}, it follows that $\mu=0$. Therefore, $I_V'(u)=0$, which shows that $u$ is a critical point of $I_V$, concluding the proof.
\end{proof}

We also recall the standard result about the splitting of bounded \textit{(PS)} sequences.
For this purpose, first we need a version of Brezis-Lieb lemma \cite{brli} for $\mathcal{D}^{1,2}\!(\mathbb{R}^N)$ found in \cite{Mederski}, Lemma A.1.

\begin{lemma}\label{Med}
	Suppose that $(u_n) \subset \mathcal{D}^{1,2}\!(\mathbb{R}^N)$ is bounded and 
	$u_n(x) \to u_0(x)$ for a.e. $x \in \mathbb{R}^N$. Then
	\begin{equation} \label{MedBL}
	\lim_{n \to \infty} \left( \int_{\mathbb{R}^{N}}\Psi(u_n)\,dx - \int_{\mathbb{R}^{N}}\Psi(u_n- u_0)\,dx\right)
	=\int_{\mathbb{R}^{N}} \Psi(u_0)\,dx
	\end{equation}
	for any function $\Psi : \mathbb{R} \to \mathbb{R}$ of class $C^1$ such that
	$|\Psi^\prime(s)|\leq C |s|^{2*-1}$ for any $s \in \mathbb{R}$ and some constant $C> 0$.
	\end{lemma}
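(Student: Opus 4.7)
The plan is to adapt the classical Brezis--Lieb argument to the critical-growth hypothesis $|\Psi'(s)|\le C|s|^{2^*-1}$. Setting $v_n:=u_n-u_0$, the goal reduces to showing
$$\int_{\mathbb{R}^N}\bigl[\Psi(v_n+u_0)-\Psi(v_n)-\Psi(u_0)\bigr]\,dx \longrightarrow 0.$$
Since $(u_n)$ is bounded in $\mathcal{D}^{1,2}(\mathbb{R}^N)$, the Sobolev embedding yields $\sup_n\|u_n\|_{2^*}<\infty$, while Fatou's lemma gives $u_0\in L^{2^*}$; hence $(v_n)$ is bounded in $L^{2^*}$ and $v_n\to 0$ a.e. The integrability of $\Psi(u_n)$ implicit in the statement forces the normalization $\Psi(0)=0$, which together with the growth hypothesis gives the pointwise bound $|\Psi(s)|\le C'|s|^{2^*}$ for all $s$.

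The first key step is the elementary inequality: for every $\varepsilon>0$ there exists $C_\varepsilon>0$ with
$$|\Psi(a+b)-\Psi(a)|\le \varepsilon\,|a|^{2^*}+C_\varepsilon\,|b|^{2^*}\qquad\text{for all } a,b\in\mathbb{R}.$$
I would derive it by writing $\Psi(a+b)-\Psi(a)=b\int_0^1\Psi'(a+tb)\,dt$, bounding $|\Psi'(a+tb)|\le C(|a|+|b|)^{2^*-1}\le C''\bigl(|a|^{2^*-1}+|b|^{2^*-1}\bigr)$ via convexity (noting $2^*-1>1$ since $N\ge 3$), and then applying Young's inequality with conjugate exponents $2^*/(2^*-1)$ and $2^*$ to the cross term $|a|^{2^*-1}|b|$.

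With this in hand, for each $\varepsilon>0$ define the excess
$$W_n^\varepsilon:=\bigl(\,|\Psi(v_n+u_0)-\Psi(v_n)-\Psi(u_0)|-\varepsilon\,|v_n|^{2^*}\,\bigr)^+.$$
Combining the inequality above (applied to $a=v_n$, $b=u_0$) with $|\Psi(u_0)|\le C'|u_0|^{2^*}$ gives the $n$-independent domination $W_n^\varepsilon\le (C_\varepsilon+C')|u_0|^{2^*}\in L^1(\mathbb{R}^N)$. Continuity of $\Psi$ together with $v_n\to 0$ a.e.\ ensure $W_n^\varepsilon\to 0$ a.e., so Lebesgue's dominated convergence theorem gives $\int W_n^\varepsilon\,dx\to 0$. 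Since by construction $|\Psi(v_n+u_0)-\Psi(v_n)-\Psi(u_0)|\le W_n^\varepsilon+\varepsilon|v_n|^{2^*}$ and $\sup_n\|v_n\|_{2^*}^{2^*}=:M<\infty$, one obtains
$$\limsup_{n\to\infty}\int_{\mathbb{R}^N}\bigl|\Psi(u_n)-\Psi(u_n-u_0)-\Psi(u_0)\bigr|\,dx\le \varepsilon M,$$
and letting $\varepsilon\to 0^+$ closes the argument.

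The main obstacle I anticipate is the verification of the pointwise inequality $|\Psi(a+b)-\Psi(a)|\le \varepsilon|a|^{2^*}+C_\varepsilon|b|^{2^*}$ with constants genuinely uniform in $(a,b)\in\mathbb{R}^2$; once this is secured, the outer scheme is the standard truncation-plus-dominated-convergence argument in the spirit of the original Brezis--Lieb proof, and no additional regularity of $\Psi$ beyond $C^1$ is required.
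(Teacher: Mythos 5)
Your argument is correct: the reduction to showing $\int[\Psi(v_n+u_0)-\Psi(v_n)-\Psi(u_0)]\,dx\to 0$, the pointwise inequality $|\Psi(a+b)-\Psi(a)|\le\varepsilon|a|^{2^*}+C_\varepsilon|b|^{2^*}$ obtained from the mean value theorem plus Young's inequality, and the subsequent truncation-and-dominated-convergence step are all sound, and the implicit observation that $\Psi(0)=0$ (needed for integrability and for $|\Psi(s)|\le C'|s|^{2^*}$) is justified. The paper does not include its own proof of this lemma --- it simply cites it as Lemma A.1 of Mederski --- so there is nothing to compare against; your self-contained argument is exactly the standard Brezis--Lieb scheme adapted to the growth bound $|\Psi'(s)|\le C|s|^{2^*-1}$, which is what the cited reference uses as well.
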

Next lemma presents a new variant of Lions' Lemma in
$\mathcal{D}^{1,2}(\mathbb{R}^N)$, which was proved by Mederski in \cite[Lemma 1.5]{Mederski}.

\begin{lemma}\label{lema3.3}
	Suppose that $(u_n) \subset \mathcal{D}^{1,2}(\mathbb{R}^N)$ is bounded and for some $r > 0$, 
	\begin{equation}\label{3.6}
	\lim_{n \to \infty}\sup_{y \, \in \, \mathbb{R}^N}\int_{B(y,r)}|u_n|^2dx = 0.
	\end{equation}
	Then, $\lim_{n \to \infty}\int_{\mathbb{R}^N}\Psi(u_n)dx = 0,$ for any continuous function $\Psi:\mathbb{R} \to [0,\infty)$ satisfying
	\begin{equation}\label{3.7}
	\lim_{s \to 0}\frac{\Psi(s)}{|s|^{2^*}} = \lim_{|s| \to \infty}\frac{\Psi(s)}{|s|^{2^*}} = 0.
	\end{equation}
\end{lemma}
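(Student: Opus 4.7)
The plan is to reduce this to the classical Lions' lemma in $H^1(\mathbb{R}^N)$ by truncating $u_n$ in a way that produces an $H^1$-bounded sequence sharing the same local vanishing property, and to split $\Psi$ according to its behavior at $0$ and at infinity.

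First, given $\varepsilon>0$, I would use the hypothesis (\ref{3.7}) to pick $0<a<b<\infty$ such that $\Psi(s)\leq\varepsilon|s|^{2^*}$ whenever $|s|\leq a$ or $|s|\geq b$. On the compact set $\{a\leq|s|\leq b\}$, continuity of $\Psi$ gives a bound $\Psi(s)\leq M_\varepsilon$, and since $|s|\geq a$ there, I may fix some $q\in(2,2^*)$ and write $\Psi(s)\leq (M_\varepsilon a^{-q})|s|^q\eqqcolon C_\varepsilon|s|^q$ on this set. Altogether,
\begin{equation*}
\int_{\mathbb{R}^N}\Psi(u_n)\,dx \leq \varepsilon\int_{\mathbb{R}^N}|u_n|^{2^*}\,dx + C_\varepsilon\int_{\{a\leq|u_n|\leq b\}}|u_n|^q\,dx.
\end{equation*}
Sobolev's inequality and the boundedness of $(u_n)$ in $\mathcal{D}^{1,2}(\mathbb{R}^N)$ control the first term uniformly by $C\varepsilon$.

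The crux is to handle the second integral. I introduce the truncation $v_n:=(|u_n|-a/2)^+$. Then $|\nabla v_n|\leq|\nabla u_n|$ a.e., so $(v_n)$ remains bounded in $\mathcal{D}^{1,2}(\mathbb{R}^N)$; moreover $v_n$ is supported on $\{|u_n|\geq a/2\}$, which has measure at most $(a/2)^{-2^*}\|u_n\|_{2^*}^{2^*}\leq C$, and $v_n\leq|u_n|$, so Hölder's inequality gives a uniform bound on $\|v_n\|_2$. Hence $(v_n)$ is bounded in $H^1(\mathbb{R}^N)$. The local vanishing hypothesis transfers trivially because $v_n\leq|u_n|$ forces $\int_{B(y,r)}v_n^2\leq\int_{B(y,r)}|u_n|^2$ uniformly in $y$. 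The classical Lions lemma on $H^1(\mathbb{R}^N)$ then yields $v_n\to 0$ in $L^q(\mathbb{R}^N)$ for every $q\in(2,2^*)$.

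To close the argument, I would exploit the elementary identity that on $\{|u_n|\geq a\}$ one has $|u_n|-a/2\geq|u_n|/2$, hence $|u_n|\leq 2v_n$ there, so
\begin{equation*}
\int_{\{a\leq|u_n|\leq b\}}|u_n|^q\,dx \leq 2^q\int_{\mathbb{R}^N}v_n^q\,dx \longrightarrow 0.
\end{equation*}
Combining with the previous display gives $\limsup_n\int_{\mathbb{R}^N}\Psi(u_n)\,dx\leq C\varepsilon$, and letting $\varepsilon\to 0$ concludes the proof. The main subtle point is producing an $H^1$-sequence from a merely $\mathcal{D}^{1,2}$-sequence: the truncation $(|u_n|-a/2)^+$ is exactly what is needed, because its support has finite measure controlled by $\|u_n\|_{2^*}^{2^*}$, which upgrades the $L^{2^*}$ integrability to $L^2$ integrability and allows the classical Lions lemma to be invoked.
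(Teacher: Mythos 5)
Your proof is correct and fills in a genuine gap that the paper's own sketch leaves open. Both you and the paper use the same three-region decomposition of $\Psi$ (small $|s|$ and large $|s|$ controlled by $\varepsilon|s|^{2^*}$, the compact middle region by $C_\varepsilon|s|^q$ with $q\in(2,2^*)$), but they diverge on the crucial point: why does the middle integral $\int_{\{a\le|u_n|\le b\}}|u_n|^q\,dx$ vanish? The paper simply says ``in view of Lions' lemma'' and then asserts boundedness of $(u_n)$ in $L^2(\mathbb{R}^N)$ --- which does not follow from $\mathcal{D}^{1,2}$-boundedness and so cannot license the classical $H^1$ Lions lemma; in effect the paper is deferring to Mederski's Lemma~1.5 rather than proving it. Your truncation $v_n=(|u_n|-a/2)^+$ is exactly the missing device: it yields a sequence that is genuinely $H^1$-bounded (gradient control is immediate, and $L^2$-control follows because the support $\{|u_n|\ge a/2\}$ has uniformly finite measure by Chebyshev in $L^{2^*}$), it inherits the uniform local $L^2$-vanishing, and it dominates $|u_n|/2$ on $\{|u_n|\ge a\}$, so the classical Lions lemma applies and closes the estimate. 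This gives a self-contained, elementary proof of the $\mathcal{D}^{1,2}$-variant that the paper only cites, at the modest cost of one extra truncation argument.
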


\begin{proof}
	Let $\varepsilon > 0$ and $2 < p < 2^*$, given arbitrarily, and suppose that $\Psi:\mathbb{R} \to [0,\infty)$ is a continuous function satisfying (\ref{3.7}). Then, we find $\delta, M \in \mathbb{R}$ with $0 < \delta < M$ and $C_\varepsilon > 0$ such that
	\noindent
	\medskip
	\newline
	$(i) \,\Psi(s) \leq \varepsilon|s|^{2^*}$, \ \ for $|s| \leq \delta$;
	\medskip
	\newline
	$(ii)\,\Psi(s) \leq \varepsilon|s|^{2^*}$, \ \ for $|s| > M;$
	\medskip
	\newline
	$(iii)\,\Psi(s) \leq C_\varepsilon|s|^{p}$, \ \ for $|s| \in (\delta,M]$.
	\medskip
	
	Hence, in the view of Lions' lemma we get
	$$\limsup_{n \to \infty}\int_{\mathbb{R}^N}\Psi(u_n)dx \leq \varepsilon\limsup_{n \to \infty}\int_{\mathbb{R}^N}|u_n|^{2}+|u_n|^{2^*}dx.$$ 
	Since $(u_n)$ is bounded in $L^2(\mathbb{R}^N)$ and 
	$L^{2^*}(\mathbb{R}^N)$, we may take the limit $\varepsilon  \to 0$ and  conclude the proof.
\end{proof}

\begin{lemma}\label{lema4.15}
	Let $(u_n)$ be a bounded sequence in $\mathcal{D}^{1,2}\!(\mathbb{R}^N)$ and $d>0$ a constant such that $$I_V(u_n) \to d \;\; \textrm{and} \quad I_V'(u_n) \to 0 \, in \, \left(\mathcal{D}^{1,2}(\mathbb{R}^N)\right)'.$$ Replacing $(u_n)$ by a subsequence, if necessary, there exists a solution $\bar{u}$ of problem (\ref{1.1}), a number $r \in \mathbb{N} \, \cup \, \{0\}$, $r$ nontrivial solutions $w^1,\cdots,w^r$ of the limit problem (\ref{P_0}) and $r$ sequences $(y_n^j) \subset \mathbb{R}^N$, $1 \leq j \leq r$, satisfying: \\
	(i) $|y_{n}^{j}| \to \infty \quad \textrm{and} \quad |y_{n}^{j}-y_{n}^{i}| \to \infty, \ \textrm{if} \ i\neq j$;\\
	(ii) $u_n-\sum_{j=1}^{r}w^j(\cdot - y_n^j) \to \bar{u} \ \ \textrm{in} \ \mathcal{D}^{1,2}\!(\mathbb{R}^N)$;\\
	(iii) $d=I_V(\bar{u})+\sum_{j=1}^{r}I_0(w^j)$,\\
	for $r \in \mathbb{N}$. In the case $r=0$, the above holds without $w^j$, $(y_n^j)$.
\end{lemma}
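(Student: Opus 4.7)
The plan is a profile decomposition / global compactness argument in the spirit of Struwe and Benci--Cerami, adapted to the vanishing potential setting through the Brezis--Lieb variant (Lemma \ref{Med}) and the Lions variant (Lemma \ref{lema3.3}). Since $(u_n)$ is bounded, I would first extract a subsequence with $u_n \rightharpoonup \bar u$ in $\mathcal{D}^{1,2}(\mathbb{R}^N)$ and $u_n(x)\to\bar u(x)$ a.e. Testing $I_V'(u_n)$ against $\varphi\in C_c^\infty(\mathbb{R}^N)$ and passing to the limit shows that $\bar u$ solves \eqref{1.1}: the $V$-term converges because $V\in L^{N/2}(\mathbb{R}^N)$ by $(V_2)$, which makes $u\mapsto\int V\,u\,\varphi\,\dx$ sequentially weakly continuous via H\"older and the Sobolev embedding, while the nonlinear term is handled by Lemma \ref{Med} combined with the subcritical control of $(f_1)$--$(f_2)$. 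Setting $v_n^1 := u_n - \bar u$, these same tools yield $I_V(v_n^1)\to d-I_V(\bar u)$, $I_V'(v_n^1)\to 0$ and $v_n^1\rightharpoonup 0$.

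Next I iterate. Assume inductively that $v_n^j\rightharpoonup 0$, $I_V'(v_n^j)\to 0$, and $I_V(v_n^j)\to d_j\geq 0$. If $d_j = 0$ then applying Lemma \ref{lema3.3} to $\Psi(s)=sf(s)$ (admissible by $(f_1)$ and \eqref{hopital}) and to $\Psi = F$ (admissible by \eqref{hop}) shows that $v_n^j\to 0$ strongly in $\mathcal{D}^{1,2}(\mathbb{R}^N)$ and the procedure stops. If $d_j>0$ then vanishing would force $\|v_n^j\|_V^2 = I_V'(v_n^j)v_n^j + \int f(v_n^j)v_n^j\to 0$, contradicting $d_j>0$; hence non-vanishing holds and there exist $y_n^j\in\mathbb{R}^N$ and $R,\delta>0$ with $\int_{B(y_n^j,R)}|v_n^j|^2\,\dx\geq\delta$. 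Since $v_n^j\rightharpoonup 0$, the sequence $(y_n^j)$ cannot stay bounded, so $|y_n^j|\to\infty$. Along a subsequence $v_n^j(\cdot+y_n^j)\rightharpoonup w^j\neq 0$, and because $V(\cdot+y_n^j)\to 0$ locally by \eqref{1.2} and $V\in L^{N/2}$, passing to the limit in $I_V'(v_n^j)\varphi(\cdot-y_n^j)=o(1)$ for $\varphi\in C_c^\infty$ kills the potential and yields $I_0'(w^j)=0$, so $w^j$ solves \eqref{P_0}.

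Set $v_n^{j+1} := v_n^j - w^j(\cdot-y_n^j)$. Brezis--Lieb (Lemma \ref{Med}) for the $F$-term, the standard orthogonality of translated bumps for the gradient term, together with the decay of the potential cross-terms $\int V(x)\,w^j(x-y_n^j)^2\,\dx\to 0$ and $\int V(x)\,v_n^j(x)\,w^j(x-y_n^j)\,\dx\to 0$ (using $|V(x)|\leq A_0(1+|x|)^{-k}$ from $(V_2)$, the decay \eqref{3.1} of $w^j$, and $|y_n^j|\to\infty$), give the energy splitting
\begin{equation*}
I_V(v_n^{j+1}) = I_V(v_n^j) - I_0(w^j) + o(1),
\end{equation*}
and an analogous computation provides $I_V'(v_n^{j+1})\to 0$. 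Mutual separation $|y_n^i - y_n^j|\to\infty$ for $i<j$ is enforced by contradiction: if $y_n^j - y_n^i$ stayed bounded along a subsequence, then $v_n^j(\cdot+y_n^i)$ would weakly converge to a translate of $w^j$, in conflict with the fact that $v_n^j$ was built from $v_n^i$ after subtracting $w^i(\cdot-y_n^i)$. Finally, every nontrivial solution $w$ of \eqref{P_0} lies on $\mathcal{P}_0$, hence $I_0(w) = \tfrac{1}{N}\|\nabla w\|_2^2 \geq \rho^2/N$ by the autonomous Pohozaev identity together with the analogue of Lemma \ref{lema4.3} for $I_0$; the inequality $\sum_j I_0(w^j)\leq d - I_V(\bar u)$ therefore forces the procedure to terminate after a finite number $r$ of steps, leaving $v_n^{r+1}\to 0$ strongly in $\mathcal{D}^{1,2}(\mathbb{R}^N)$ and delivering (i)--(iii).

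The main obstacle, I expect, is the careful bookkeeping of the $V$-dependent terms across the iteration. In the translation-invariant case the residual after subtracting $w^j$ is automatically a PS sequence for $I_0$ and everything decouples; here the ambient functional is $I_V$, so one must verify at every step that the cross term $\int V(x)\,v_n^j(x)\,w^j(x-y_n^j)\,\dx$ is $o(1)$ and that $I_V'(v_n^{j+1})\to 0$ in $(\mathcal{D}^{1,2}(\mathbb{R}^N))'$. The remedy is a splitting of the integral into a large ball centered at $y_n^j$, where $V$ is small thanks to $(V_2)$, and its complement, where $w^j(\cdot-y_n^j)$ is $L^{2^*}$-small thanks to \eqref{3.1} and $|y_n^j|\to\infty$; executing this splitting at each stage of the induction is what makes the argument go through.
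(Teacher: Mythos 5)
Your decomposition reproduces the global compactness strategy that the paper invokes by simply citing Lemma 3.9 of Clapp--Maia, and you correctly identify the zero-mass replacements: the Brezis--Lieb variant (Lemma \ref{Med}) for $\Psi = F$ and $\Psi(s) = sf(s)$, the Lions variant (Lemma \ref{lema3.3}), and the decay of $V$ from ($V_2$) to kill the cross-terms. The architecture of your argument is the same as the cited proof.

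However, your stopping criterion contains a genuine logical gap. You write that if $d_j = 0$ then ``applying Lemma \ref{lema3.3} to $\Psi(s)=sf(s)$\ldots\ shows that $v_n^j\to 0$ strongly.'' Lemma \ref{lema3.3} cannot be invoked this way: its \emph{hypothesis} is the vanishing condition \eqref{3.6}, and $d_j = 0$ alone does not supply it. The correct dichotomy at each stage of the induction is vanishing versus non-vanishing (equivalently, $\liminf_n\|\nabla v_n^j\|_2 = 0$ along a subsequence or not), not $d_j = 0$ versus $d_j > 0$. In the vanishing case, Lemma \ref{lema3.3} together with $I_V'(v_n^j)v_n^j = \|v_n^j\|_V^2 - \int f(v_n^j)v_n^j \to 0$ gives $\|v_n^j\|_V \to 0$ and the iteration halts; otherwise you extract a bump $w^j \in \mathcal{P}_0$ with $\|\nabla w^j\|_2 \geq \rho$. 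Termination is then most safely argued through the gradient norm splitting $\|\nabla v_n^{j+1}\|_2^2 = \|\nabla v_n^j\|_2^2 - \|\nabla w^j\|_2^2 + o(1)$ and the boundedness of $(u_n)$, since each bump removes at least $\rho^2$ of gradient mass. Your alternative termination via the energies, resting on the inequality $\sum_j I_0(w^j) \leq d - I_V(\bar u)$, tacitly assumes $d_{j+1} \geq 0$; under the mild hypotheses ($f_1$)--($f_2$), with no Ambrosetti--Rabinowitz-type condition, this positivity of the residual levels is not a priori available. Both points are fixable, but as written the $d_j = 0$ step reverses the direction of Lemma \ref{lema3.3} and the termination argument begs a positivity you have not established.
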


\begin{proof}
The proof follows closely the proof of Lemma 3.9 in \cite{Clapp-Maia} using Lemmas \ref{Med} and \ref{lema3.3} either for $\Psi(u)= F(u)$ or $\Psi(u)=f(u)u$, $u \in \mathcal{D}^{1,2}\!(\mathbb{R}^N)$, wherever convenient.
\end{proof}

\begin{lemma}\label{lema4.16}
	Assume that {\bf ($V_1$)--($V_4$)} and {\bf ($f_1$)--($f_3$)} hold true. If $p_V$ is not attained for $I_V$ in $\mathcal{P}_V$, then $p_V \geq p_0$ and every bounded (PS)$_d$-sequence in $\mathcal{D}^{1,2}\!(\mathbb{R}^N)$ has a convergent subsequence, at any level $d \in (p_0,2p_0)$.
\end{lemma}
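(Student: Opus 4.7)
The argument splits into two stages matching the two conclusions: first I would establish $p_V\ge p_0$ by feeding a minimizing sequence of $I_V$ on $\mathcal{P}_V$ through the splitting Lemma \ref{lema4.15}; then I would use the same splitting tool to analyze bounded (PS)$_d$-sequences at levels $d\in(p_0,2p_0)$.

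For the first stage, I would take a minimizing sequence $(u_n)\subset\mathcal{P}_V$ for $p_V$, and apply Ekeland's variational principle on the closed $C^2$-manifold $\mathcal{P}_V$ (Proposition \ref{lema3.1}) to assume $\|\nabla_{\mathcal{P}_V}I_V(u_n)\|\to 0$. Lemma \ref{lema3.2} gives boundedness, and Lemma \ref{lema4.13} upgrades $(u_n)$ to a bounded (PS)$_{p_V}$-sequence for $I_V$ (free). The splitting Lemma \ref{lema4.15} then yields
\[
u_n-\sum_{j=1}^r w^j(\cdot-y_n^j)\to \bar u\ \text{in}\ \mathcal{D}^{1,2}(\mathbb{R}^N),\qquad p_V=I_V(\bar u)+\sum_{j=1}^r I_0(w^j),
\]
where $\bar u$ solves \eqref{1.1} and each $w^j$ solves \eqref{P_0}. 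If $\bar u\neq 0$ then $\bar u\in\mathcal{P}_V$ and $I_V(\bar u)\ge p_V$; combined with $I_0(w^j)\ge m=p_0>0$ this forces $r=0$ and $u_n\to\bar u$, so $p_V$ would be attained, contradicting the hypothesis. Hence $\bar u=0$ and $p_V=\sum_{j=1}^r I_0(w^j)\ge r p_0$; since Lemma \ref{lema4.4} gives $p_V>0$, we have $r\ge 1$ and therefore $p_V\ge p_0$. Lemma \ref{lema4.12} then upgrades this to $p_V=p_0$.

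For the second stage, given a bounded (PS)$_d$-sequence $(u_n)$ with $d\in(p_0,2p_0)$, the same splitting yields $d=I_V(\bar u)+\sum_{j=1}^r I_0(w^j)$. If $\bar u\neq 0$ then $I_V(\bar u)\ge p_V=p_0$ gives $d\ge(r+1)p_0$; together with $d<2p_0$ this forces $r=0$ and $u_n\to\bar u$ strongly in $\mathcal{D}^{1,2}(\mathbb{R}^N)$, as desired. If instead $\bar u=0$, then $d\ge r p_0$ and the two-sided bound $p_0<d<2p_0$ permits only $r=1$, leaving $d=I_0(w^1)\in(p_0,2p_0)$ for some nontrivial solution $w^1$ of \eqref{P_0}.

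The main obstacle is to rule out this residual scenario $\bar u=0$, $r=1$. My plan is to exploit hypothesis ($f_3$) together with the odd extension of $f$: uniqueness of the fast-decay positive solution forces any nonnegative nontrivial solution of \eqref{P_0} to be a translate of the ground state $w$ with $I_0(w)=m=p_0$, while sign-changing solutions can be shown to carry energy at least $2p_0$ by decomposing their action via the Pohozaev constraint into two one-signed contributions. This dichotomy leaves no room for a critical value of $I_0$ in the open interval $(p_0,2p_0)$, thereby excluding $r=1$ and forcing $\bar u\neq 0$, $r=0$, which yields the strong convergence $u_n\to\bar u$ along a subsequence and completes the proof.
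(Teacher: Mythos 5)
Your overall structure matches the paper's: feed a (PS)$_{p_V}$-sequence and then a bounded (PS)$_d$-sequence through the splitting Lemma~\ref{lema4.15}, and do a case analysis on $r$ and on whether $\bar u=0$; the first-stage argument (Ekeland on $\mathcal{P}_V$, upgrading via Lemma~\ref{lema4.13}, splitting, then $p_V=\sum_j I_0(w^j)\ge p_0$ since $p_V>0$) is a correct and slightly more explicit rendering of the paper's terse first paragraph, and your case (i) in the second stage is exactly the paper's.

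The place where your proposal diverges, and where it has a gap, is the residual case $\bar u=0$, $r=1$. The paper disposes of it by invoking uniqueness of the positive fast-decay solution to write $d=I_0(w^1)=p_0$, i.e.\ it treats $w^1$ as one-signed; Lemma~\ref{lema4.15} only asserts $w^1$ is a nontrivial solution of \eqref{P_0}. You correctly notice that a sign-changing $w^1$ must be excluded, but the fix you propose does not go through as stated: the Pohozaev identity on the nodal set $\Omega^{\pm}=\{\pm u>0\}$ carries a boundary term $\tfrac12\int_{\partial\Omega^\pm}|\nabla u|^2\,x\cdot\eta\,dS$ whose sign is indefinite (and the two boundary contributions cancel when you add them, returning only the global identity), so $u^{+}$ and $u^{-}$ do not individually lie on $\mathcal{P}_0$ and there is no Pohozaev-based decomposition that yields $I_0(w^1)\ge 2p_0$. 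The usual energy-doubling bound for nodal solutions is obtained through the Nehari manifold together with a monotonicity assumption on $f(s)/s$, which this paper deliberately avoids. To close the gap along the lines you suggest, one would have to either show the bump $w^1$ inherited from the splitting can be taken one-signed under the present hypotheses, or supply an argument that nodal solutions of \eqref{P_0} satisfying $(f_1)$--$(f_3)$ cannot have $I_0$ in $(p_0,2p_0)$; neither is provided by the Pohozaev constraint alone.
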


\begin{proof}
	First let us prove that $p_V \geq p_0$. Indeed, let $(u_n) \subset \mathcal{D}^{1,2}(\mathbb{R}^N)$ be a bounded sequence and (PS) at level $p_V$, i.e. $I_V(u_n) \to p_V$ and $I'_V(u_n) \to 0$ in $(\mathcal{D}^{1,2}\!(\mathbb{R}^N))^\prime$. By Lemma \ref{lema4.4}, we have $p_V > 0$, and by using Lemma \ref{lema4.15}, it follows that $p_V \geq p_0$. 
Now, let us prove that every bounded (PS)$_d$-sequence in $\mathcal{D}^{1,2}\!(\mathbb{R}^N)$ has a convergent subsequence, at every level $d \in (p_0,2p_0)$. Indeed, given $d \in (p_0,2p_0)$, we take a bounded sequence $(u_n) \subset \mathcal{D}^{1,2}\left(\mathbb{R}^N\right)$ such that $I_V(u_n) \to d$ and $\Vert I'_V(u_n) \Vert_{\left(\mathcal{D}^{1,2}(\mathbb{R}^N)\right)'} \to 0$. Then, using Lemma \ref{lema4.15}, after passing to a subsequence, we obtain 
	\begin{equation} \label{uconvergence}
	u_n - \sum_{j=1}^{r}w^j(\cdot - y_{n}^{j}) \to \bar{u} \qquad \textrm{in} \ \ \mathcal{D}^{1,2}\!(\mathbb{R}^N),
	\end{equation} 
	where $w^j$ is a weak solution of the problem (\ref{P_0}), $(y_n^j) \subset \mathbb{R}^N$ with $|y_{n}^{j}| \to \infty$ and $\bar{u}$ is a weak solution of the problem (\ref{1.1})
	and by Lemma \ref{lema4.4} it follows that $I_V(\bar{u}) > 0$. Thus, since $d \in (p_0,2p_0)$ from Lemma \ref{lema4.15} $(iii)$, then $r < 2$. If $r = 1$, we have two cases:\\
	(i) $\bar{u} \neq 0$. In this case, we have $I_V(\bar{u}) \geq p_0$. Moreover, $I_0(w^1) = p_0$, then it follows that $d \geq 2p_0$.\\ 
	(ii) $\bar{u} = 0$. In this case, since $w$ is the unique positive solution (up to translations) of the problem (\ref{P_0}), we have $d = I_0(w^1) = I_V(w)  = p_0$.
	In both cases, we get a contradiction, since $d \in (p_0,2p_0)$. Therefore, we must have $r = 0$ and it follows that $u_n \to \bar{u}$ in $\mathcal{D}^{1,2}\!(\mathbb{R}^N)$.
\end{proof}

\begin{corollary}\label{cor4.17}
	Assume that {\bf ($V_1$)--($V_5$)} and {\bf ($f_1$)--($f_3$)} hold true. If $p_V$ is not attained for $I_V$ in $\mathcal{P}_V$, $(u_n)$ is a sequence in $\mathcal{P}_V$ such that $I_V(u_n) \to d$, with $d \in (p_0,2p_0)$, and $I_V|_{\mathcal{P}_V}'(u_n) \to 0$ in $(\mathcal{D}^{1,2}\!(\mathbb{R}^N))^\prime$, then $(u_n)$ is relatively compact in $\mathcal{D}^{1,2}\!(\mathbb{R}^N)$, i.e. after passing to a subsequence, there exists $\bar{u} \in \mathcal{P}_V$ such that $u_n \to \bar{u}$.
\end{corollary}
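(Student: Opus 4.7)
The plan is to chain together three earlier results. First, since $(u_n)\subset \mathcal{P}_V$ and $I_V(u_n)\to d>0$, Lemma \ref{lema3.2} gives at once that $(u_n)$ is bounded in $\mathcal{D}^{1,2}(\mathbb{R}^N)$. This is the standard use of the fact that on $\mathcal{P}_V$ the functional $I_V$ controls $\|u\|_V^2$ from below, so no new estimate is required here.

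Next, I would invoke Lemma \ref{lema4.13}: a $(PS)_d$ sequence for $I_V$ restricted to $\mathcal{P}_V$ is automatically a $(PS)_d$ sequence for $I_V$ in the free sense. This is exactly the role the Pohozaev manifold being a natural constraint plays (cf.\ Corollary \ref{cor4.14}). Combined with the boundedness just obtained, $(u_n)$ becomes a bounded $(PS)_d$ sequence for $I_V$ on the whole space $\mathcal{D}^{1,2}(\mathbb{R}^N)$, with $d\in(p_0,2p_0)$.

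Now I would apply Lemma \ref{lema4.16}: under the standing hypotheses, together with the assumption that $p_V$ is not attained on $\mathcal{P}_V$, every bounded $(PS)_d$ sequence with $d\in(p_0,2p_0)$ admits a subsequence converging strongly in $\mathcal{D}^{1,2}(\mathbb{R}^N)$. So, after extracting such a subsequence, we obtain $u_n\to \bar u$ in $\mathcal{D}^{1,2}(\mathbb{R}^N)$.

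It remains to check that the strong limit $\bar u$ lies in $\mathcal{P}_V$, which will be the small closing step. Continuity of $J_V$ on $\mathcal{D}^{1,2}(\mathbb{R}^N)$ together with $J_V(u_n)=0$ for all $n$ gives $J_V(\bar u)=0$; to conclude $\bar u\in\mathcal{P}_V$ I must rule out $\bar u\equiv 0$. This follows from Lemma \ref{lema4.3}: the uniform lower bound $\|\nabla u_n\|_2\geq \rho>0$ passes to the limit under strong convergence, yielding $\|\nabla\bar u\|_2\geq\rho>0$ and hence $\bar u\neq 0$. The only potential obstacle is really this last nontriviality check, but it is handled directly by the quantitative estimate already established on $\mathcal{P}_V$, so the argument goes through cleanly.
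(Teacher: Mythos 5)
Your proposal is correct and follows essentially the same route as the paper: boundedness from Lemma \ref{lema3.2}, upgrading the constrained $(PS)_d$ sequence to a free one via Lemma \ref{lema4.13}, and then invoking Lemma \ref{lema4.16} for strong convergence. Your explicit closing step, showing $\bar u\in\mathcal{P}_V$ via continuity of $J_V$ together with the uniform lower bound $\|\nabla u_n\|_2\geq\rho$ from Lemma \ref{lema4.3}, is a welcome piece of care that the paper leaves implicit.
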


\begin{proof}
	Using Lemmas \ref{lema4.12} and \ref{lema4.16}, it follows that $p_V=p_0$. By assumption, we have $(u_n) \subset \mathcal{P}_V$ is a sequence such that $I_V(u_n) \to d$ and $I_V|_{\mathcal{P}_V}'(u_n) \to 0$. Then, using the Lemma \ref{lema4.13} we have $I'_V(u_n) \to 0$ in $(\mathcal{D}^{1,2}\!(\mathbb{R}^N))^\prime$ and by Lemma \ref{lema3.2} it follows that $(u_n)$ is bounded in $\mathcal{D}^{1,2}\!(\mathbb{R}^N)$. Thus, by Lemma \ref{lema4.15}, after passing to a subsequence, we get \eqref{uconvergence}
	where $w^j$ is a weak solution of the problem (\ref{P_0}), $(y_n^j) \subset \mathbb{R}^N$ with $|y_{n}^{j}| \to \infty$ and $\bar{u}$ is a weak solution of problem (\ref{1.1}). Therefore, it follows from Lemma \ref{lema4.16} that $u_n \to \bar{u}$, with $\bar{u} \in \mathcal{P}_V$.
\end{proof}

\section{Existence of a positive solution}


We will need the following result of \cite[Lemma 4.1]{Clapp-Maia} and we refer  to that for the proof .
\begin{lemma}\label{ClappM}
	\label{power}
	\begin{enumerate}
		\item[{\rm (a)}] If $y_{0},y\in\mathbb{R}^{N},$ $y_{0}\neq y,$ and $\alpha$ and $\beta$ are positive constants such that $\alpha+\beta>N$, then there exists $C_{1}=C_{1}(\alpha,\beta,\lvert y-y_{0}\rvert)>0$ such that
		\[
		\int_{\mathbb{R}^{N}}\frac{\mathrm{d}x}{(1+|x-Ry_{0}|)^{\alpha}%
			(1+|x-Ry|)^{\beta}}\leq C_{1}R^{-\mu}%
		\]
		for all $R\geq1,$ where $\mu:=\min\{\alpha,\beta,\alpha+\beta-N\}$.
		
		\item[{\rm (b)}] If $y_{0},y\in\mathbb{R}^{N}\setminus\{0\},$ and $\theta$ and $\gamma$ are positive constants such that $\theta+2\gamma>N$, then there exists $C_{2}=C_{2}(\theta,\gamma,\lvert y_{0}\rvert,\lvert y\rvert)>0$ such that
		\[
		\int_{\mathbb{R}^{N}}\frac{\mathrm{d}x}{(1+|x|)^{\theta}(1+|x-Ry_{0}%
			|)^{\gamma}(1+|x-Ry|)^{\gamma}}\leq C_{2}R^{-\tau},
		\]
		for all $R\geq1,$ where $\tau:=\min\{\theta,2\gamma,\theta+2\gamma-N\}$.
	\end{enumerate}
\end{lemma}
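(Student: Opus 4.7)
The plan is to exploit the separation between the shifted centres $Ry_{0}$ and $Ry$ (together with the origin in part~(b)) by partitioning $\mathbb{R}^{N}$ into a ball around each centre and a complementary far region, bounding the integrand uniformly on each piece via the triangle inequality, and then computing the residual integrals. The exponents $\alpha+\beta-N$ and $\theta+2\gamma-N$ appear as the rate at which the tail integrals $\int_{|u|\geq cR}(1+|u|)^{-(\alpha+\beta)}\,du$ and $\int_{|u|\geq cR}(1+|u|)^{-(\theta+2\gamma)}\,du$ decay; the hypotheses $\alpha+\beta>N$ and $\theta+2\gamma>N$ are exactly what is needed for these tails to be finite and to produce a negative power of $R$.

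For part~(a), I fix $\delta:=|y-y_{0}|/3>0$, set $A_{0}:=B(Ry_{0},R\delta)$, $A_{y}:=B(Ry,R\delta)$, and $A_{\infty}:=\mathbb{R}^{N}\setminus(A_{0}\cup A_{y})$. On $A_{0}$ the triangle inequality gives $|x-Ry|\geq 2R\delta$, whence $(1+|x-Ry|)^{\beta}\geq cR^{\beta}$; the translation $u=x-Ry_{0}$ then bounds the contribution of $A_{0}$ by $cR^{-\beta}\int_{B(0,R\delta)}(1+|u|)^{-\alpha}\,du$, which is of order $R^{-\beta}$ when $\alpha>N$ and of order $R^{-(\alpha+\beta-N)}$ when $\alpha<N$; in either case it is $\leq CR^{-\mu}$. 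The region $A_{y}$ is symmetric, with the roles of $\alpha$ and $\beta$ interchanged. On $A_{\infty}$ I split further according to whether $|x-Ry_{0}|\leq|x-Ry|$ or not; in the first subregion the integrand is dominated by $(1+|x-Ry_{0}|)^{-(\alpha+\beta)}$, and translation by $Ry_{0}$ reduces the estimate to $\int_{|u|\geq R\delta}(1+|u|)^{-(\alpha+\beta)}\,du\lesssim R^{N-\alpha-\beta}=R^{-(\alpha+\beta-N)}$; the opposite subregion is symmetric.

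For part~(b), the case $y_{0}=y$ collapses to part~(a) with exponents $\theta$ and $2\gamma$. Otherwise, the three centres $0$, $Ry_{0}$, $Ry$ are pairwise separated by distances $\geq cR$ (using $|y_{0}|,|y|,|y_{0}-y|>0$), so I decompose $\mathbb{R}^{N}$ into three balls of radius $cR$ around them plus a far region $A_{\infty}$. On the ball around $Ry_{0}$, the triangle inequality gives both $|x|\geq cR$ and $|x-Ry|\geq cR$, so the product $(1+|x|)^{\theta}(1+|x-Ry|)^{\gamma}\geq CR^{\theta+\gamma}$; translation by $Ry_{0}$ then leaves an integral of size $R^{-(\theta+\gamma)}\int_{B(0,cR)}(1+|u|)^{-\gamma}\,du$, which is $\leq CR^{-\tau}$ by the same case analysis as in part~(a). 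The balls around $Ry$ and $0$ are treated analogously, yielding contributions $\leq CR^{-\tau}$. On $A_{\infty}$, two of the three denominators are bounded below by the third exactly as in part~(a), reducing the estimate to $\int_{|u|\geq cR}(1+|u|)^{-(\theta+2\gamma)}\,du\lesssim R^{-(\theta+2\gamma-N)}$.

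The only genuine technical wrinkle is the borderline case where one of $\alpha,\beta,\gamma,\theta$ equals $N$, which makes $\int_{B(0,R\delta)}(1+|u|)^{-N}\,du$ scale like $\log R$ rather than being bounded. This is absorbed by replacing the offending exponent by a slightly smaller one in the intermediate estimate, at the cost of enlarging the constant $C$; the stated bound $R^{-\mu}$ (respectively $R^{-\tau}$) is then recovered because $\mu$ (respectively $\tau$) is defined via a strict minimum. Beyond this book-keeping, the argument is purely elementary, and the main effort is just the careful case distinction that ensures every subregion contributes at least $R^{-\mu}$.
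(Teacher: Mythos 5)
The paper does not prove this lemma; it cites \cite[Lemma 4.1]{Clapp-Maia} and refers the reader there, so there is no in-paper argument to compare against. Your decomposition --- a ball of radius $\sim R$ around each shifted centre, a far region, and a further split of the far region by nearest centre --- is the standard approach, and your estimates in the non-degenerate cases are correct.

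The gap is in the borderline case, which you rightly flagged but did not resolve. If $\alpha = N$ and $\beta < N$ in part (a), then $\mu = \beta$, and the ball $B(Ry_0, R\delta)$ contributes about $R^{-\beta}\int_{B(0,R\delta)}(1+|u|)^{-N}\,du \approx R^{-\beta}\log R$, which is not $O(R^{-\mu})$. Your proposed fix --- replacing the exponent $N$ by $N-\epsilon$ --- yields only $O(R^{-\beta+\epsilon})$, because $\min\{N-\epsilon,\beta,\beta-\epsilon\}=\beta-\epsilon$ when $\beta<N$; the stated rate $R^{-\mu}$ is not ``recovered because $\mu$ is a strict minimum'', contrary to your closing claim. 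Indeed the estimate as written fails in this corner: for $N=3$, $\alpha=3$, $\beta=1$, $y_0=0$, $y=e_1$, restricting the integral to $|x|\leq R/2$ already gives a lower bound of order $R^{-1}\log R$, contradicting the asserted bound $CR^{-1}$. One must therefore either add a non-degeneracy hypothesis such as $\alpha,\beta\neq N$ in (a) (and $\theta\neq N$ in (b)) or weaken the conclusion by a logarithmic factor. None of the paper's applications (Lemmas \ref{w_0w_y}, \ref{gradw_0.gradw_y}, \ref{lema11}) sit in this degenerate regime, and they only need the decay rate to exceed $N-2$, so the rest of the paper is unaffected; but your proof as written has a genuine gap at the borderline exponents.
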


In this section we will prove our main result. Its proof requires some important estimates and the previous lemmata.

In what follows, we will consider $y_0 \in \mathbb{R}^N$ a fixed vector, with $|y_0|=1$ and $w$ the positive radial ground state solution of the limit problem (\ref{P_0}). Let $B_r(x_0):=\{x \in \mathbb{R}^N: |x-x_0|\leq r\}$. For any $y \in \partial B_2(y_0)$, $R > 0$ and every $\lambda \in (0,1)$, we write 
\begin{equation}\label{5.1}
	w_{0,\lambda}^R:=w\left(\frac{\cdot-Ry_0}{\lambda}\right), \quad w_{y,1-\lambda}^R:=w\left(\frac{\cdot-Ry}{1-\lambda}\right)
\end{equation}
and, for $\lambda=0$ or $\lambda=1$, we write, respectively, 
\begin{equation}\label{5.2}
	w_{0,\lambda}^R:=0, \quad w_{y,1-\lambda}^R:=0.
\end{equation}

In the next lemmas we study the interaction of powers of these two translated and dilated solitons.
	
	\begin{lemma}\label{w_0w_y}
	Let $\bar{\alpha}$ and $\bar{\beta}$ be constants such that $2\bar{\alpha} > 2^*$ and $\bar{\beta} \geq 1$. Then, for any $R \geq 1$, $y \in \partial B_2(y_0)$ and $\lambda \in [0,1]$, there exist constants $C_3=C_3(N,\bar{\alpha},\bar{\beta})>0$ and $C_4=C_4(N,\bar{\alpha},\bar{\beta})>0$ such that 
	\begin{equation}\label{5.5}
	\int_{\mathbb{R}^N}\left(w_{0,\lambda}^R\right)^{\bar{\alpha}}\left(w_{y,1-\lambda}^R\right)^{\bar{\beta}} \leq C_3R^{-(N-2)},
	\end{equation}
	and
	\begin{equation}\label{5.6}
	\int_{\mathbb{R}^N}\left(w_{y,1-\lambda}^R\right)^{\bar{\alpha}}\left(w_{0,\lambda}^R\right)^{\bar{\beta}} \leq C_4R^{-(N-2)}.
	\end{equation} 
\end{lemma}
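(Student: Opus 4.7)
The plan is to reduce both integrals to applications of Lemma~\ref{ClappM}(a), using the fast decay estimate \eqref{3.1} together with the elementary observation that $1 + |z|/t \geq 1 + |z|$ for every $t \in (0,1]$ and $z \in \mathbb{R}^N$.

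First, the boundary cases $\lambda = 0$ and $\lambda = 1$ are trivial: by \eqref{5.2}, in each case one of the two factors in the integrand vanishes identically, so both integrals equal zero and there is nothing to prove. For $\lambda \in (0,1)$, I would combine \eqref{3.1} with the monotonicity inequality above (applied separately to $\lambda$ and $1-\lambda$) to obtain the $\lambda$-free pointwise bound
\begin{equation*}
\bigl(w_{0,\lambda}^R(x)\bigr)^{\bar{\alpha}}\bigl(w_{y,1-\lambda}^R(x)\bigr)^{\bar{\beta}} \;\leq\; A_5^{\bar{\alpha}+\bar{\beta}}\,(1+|x-Ry_0|)^{-(N-2)\bar{\alpha}}\,(1+|x-Ry|)^{-(N-2)\bar{\beta}}.
\end{equation*}
Since $|y-y_0|=2$ is constant for $y \in \partial B_2(y_0)$, Lemma~\ref{ClappM}(a) applied with $\alpha := (N-2)\bar{\alpha}$ and $\beta := (N-2)\bar{\beta}$ yields a constant independent of $y$ and $\lambda$, and produces a factor $R^{-\mu}$ with
\begin{equation*}
\mu = \min\bigl\{(N-2)\bar{\alpha},\; (N-2)\bar{\beta},\; (N-2)(\bar{\alpha}+\bar{\beta}) - N \bigr\}.
\end{equation*}

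What remains is a short arithmetic check that $\mu \geq N-2$. The hypothesis $2\bar{\alpha} > 2^*$ is exactly $\bar{\alpha} > N/(N-2) = 1 + 2/(N-2)$, so $(N-2)\bar{\alpha} > N > N-2$; the hypothesis $\bar{\beta} \geq 1$ gives $(N-2)\bar{\beta} \geq N-2$; and the third quantity satisfies $(N-2)(\bar{\alpha}+\bar{\beta}) - N \geq N-2$, which rearranges to $\bar{\alpha} + \bar{\beta} \geq 2 + 2/(N-2)$ and follows by adding the previous two inequalities. Hence $\mu \geq N-2$, so $R^{-\mu} \leq R^{-(N-2)}$ for $R \geq 1$, establishing \eqref{5.5}. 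The estimate \eqref{5.6} follows by exactly the same argument with the roles of $y_0$ and $y$ exchanged; no asymmetry appears because both the pointwise decay bound and Lemma~\ref{ClappM}(a) are symmetric in the two centers.

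I do not see a real obstacle in this lemma; the only point worth isolating is that the $\lambda$-dependence is absorbed by the monotonicity of $t \mapsto 1 + |z|/t$ on $(0,1]$, which is precisely what permits a constant uniform in $\lambda \in [0,1]$. That uniformity is the feature that will matter later, when $\lambda$ is treated as a continuous parameter in the interaction analysis of the two bumps.
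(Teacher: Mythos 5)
Your proof is correct and follows essentially the same route as the paper's: dispose of $\lambda \in \{0,1\}$ via \eqref{5.2}, use the monotonicity bound \eqref{deslambda} and the decay estimate \eqref{3.1} to remove the $\lambda$-dependence, then invoke Lemma~\ref{ClappM}(a) and check that $\mu \geq N-2$. The only difference is that you spell out the arithmetic showing $\mu \geq N-2$ in more detail than the paper does, which is a welcome clarification but not a different argument.
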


\begin{proof}
	If $\lambda = 0$ or $\lambda = 1$, the result follows trivially using the definitions (\ref{5.2}). Suppose now that $\lambda \in (0,1)$ and observe that
		\begin{equation}\label{deslambda}
		1+\left|\frac{x-Ry_0}{\lambda}\right| \geq 1+\left|x-Ry_0\right| \qquad \textrm{and} \qquad 1+\left|\frac{x-Ry}{1-\lambda}\right| \geq 1+\left|x-Ry\right|,
	\end{equation}

	 so by inequalities in $(\ref{3.1})$  there exists $C > 0$ such that
	\begin{eqnarray*}
	 \int_{\mathbb{R}^N}\left(w\left(\frac{x-Ry_0}{\lambda}\right)\!\right)^{\bar{\alpha}}\left(w\left(\frac{x-Ry}{1-\lambda}\right)\!\right)^{\bar{\beta}} 
		&\leq& C\!\int_{\mathbb{R}^N}\left(1+\left|\frac{x-Ry_0}{\lambda}\right|\right)^{-\bar{\alpha}(N-2)}\left(1+\left|\frac{x-Ry}{1-\lambda}\right|\right)^{-\bar{\beta}(N-2)} \\
		&\leq& C\!\int_{\mathbb{R}^N}\left(1+\left|x-Ry_0\right|\right)^{-\bar{\alpha}(N-2)}\left(1+\left|x-Ry\right|\right)^{-\bar{\beta}(N-2)}.
	\end{eqnarray*}
	Since $\bar{\alpha} > 2^*/2$ and $\bar{\beta} \geq 1$, then $\bar{\alpha}(N-2) > N$ and $\bar{\beta}(N-2) \geq N-2$. Therefore, we can apply Lemma \ref{ClappM}(a) with $\alpha = \bar{\alpha}(N-2)$ and $\beta = \bar{\beta}(N-2)$, in which $\mu:=\min\{\alpha,\beta,\alpha+\beta-N\} \geq N-2$, to obtain $C_3 > 0$ such that  
	\begin{eqnarray*}
		\int_{\mathbb{R}^N}\left(w_{0,\lambda}^R\right)^{\bar{\alpha}}\left(w_{y,1-\lambda}^R\right)^{\bar{\beta}} \leq C_3R^{-(N-2)}.
	\end{eqnarray*}
	Similarly, there exists $C_4 > 0$ such that
	\begin{eqnarray*}
		\int_{\mathbb{R}^N}\left(w_{y,1-\lambda}^R\right)^{\bar{\alpha}}\left(w_{0,\lambda}^R\right)^{\bar{\beta}} \leq C_4R^{-(N-2)}.
	\end{eqnarray*}
	
\end{proof}

Now, for every $\lambda \in [0,1]$, we will define
\begin{equation}\label{5.4}
\varepsilon_{\lambda}^R(y):=\int_{\mathbb{R}^N}f\!\left(w_{0,\lambda}^R\right)w_{y,1-\lambda}^R\,dx. 
\end{equation}
We will obtain some estimates for $\varepsilon_{\lambda}^R$ and show they do not depend on $y$.

\begin{lemma}\label{varep_m}
	Assume that {\bf ($f_1$)} holds true. Then, there exists a constant $C>0$ such that 
	\begin{equation}\label{desiepsilon}
	\varepsilon_{\lambda}^R \leq CR^{-(N-2)},
	\end{equation}
	for all $y \in \partial B_2(y_0)$, $\lambda \in [0,1]$ and $R \geq 1$.
\end{lemma}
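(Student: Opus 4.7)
The plan is to apply the growth bound from hypothesis $(f_1)$ to reduce the claim to a pure power interaction integral, then invoke Lemma \ref{w_0w_y}.

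First, I would handle the trivial boundary cases $\lambda=0$ and $\lambda=1$ separately: by definition \eqref{5.2}, either $w_{0,\lambda}^R \equiv 0$ (and hence $f(w_{0,\lambda}^R)\equiv 0$ since $f(0)=0$ by $(f_1)$) or $w_{y,1-\lambda}^R\equiv 0$, so in both cases $\varepsilon_{\lambda}^R = 0$ and the inequality is trivial.

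Next, for $\lambda\in(0,1)$, hypothesis $(f_1)$ with $i=0$ gives $|f(s)| \leq A_2|s|^{2^*-1}$ for all $s\geq 0$, so that
\begin{equation*}
\varepsilon_{\lambda}^R(y) \;=\; \int_{\mathbb{R}^N} f\!\left(w_{0,\lambda}^R\right) w_{y,1-\lambda}^R \, dx \;\leq\; A_2 \int_{\mathbb{R}^N} \left(w_{0,\lambda}^R\right)^{2^*-1} w_{y,1-\lambda}^R \, dx.
\end{equation*}
Now I would apply Lemma \ref{w_0w_y} with the choice $\bar{\alpha}=2^*-1$ and $\bar{\beta}=1$. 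The hypotheses of that lemma are satisfied: since $N\geq 3$ we have $2^*>2$, hence $2\bar{\alpha}=2(2^*-1)=2\cdot 2^*-2>2^*$, and clearly $\bar{\beta}=1\geq 1$. Therefore there exists a constant $C_3=C_3(N)>0$ such that
\begin{equation*}
\int_{\mathbb{R}^N}\left(w_{0,\lambda}^R\right)^{2^*-1} w_{y,1-\lambda}^R \, dx \;\leq\; C_3\,R^{-(N-2)},
\end{equation*}
for all $R\geq 1$ and every $\lambda \in (0,1)$. Setting $C:=A_2 C_3$ yields the claim.

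The only subtlety worth checking is the uniformity of $C$ with respect to $y \in \partial B_2(y_0)$. Tracking back to Lemma \ref{ClappM}(a), the constant there depends on $|y-y_0|$; but in our application $y \in \partial B_2(y_0)$, so $|y-y_0|=2$ is fixed, and uniformity in $y$ follows. Uniformity in $\lambda \in (0,1)$ is built into the proof of Lemma \ref{w_0w_y} via the pointwise bound \eqref{deslambda}, which removes any $\lambda$-dependence before applying Lemma \ref{ClappM}. No step is genuinely hard here — it is essentially a one-line reduction plus a direct citation of the previous lemma.
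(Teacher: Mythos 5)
Your proof is correct and follows essentially the same route as the paper: handle the endpoints $\lambda\in\{0,1\}$ trivially via \eqref{5.2}, apply the growth bound from $(f_1)$ to reduce to the power interaction integral, and then cite Lemma~\ref{w_0w_y} with $\bar\alpha=2^*-1$, $\bar\beta=1$. Your extra remarks on verifying $2\bar\alpha>2^*$ and on uniformity in $y$ and $\lambda$ are accurate but are the same considerations implicit in the paper's one-line argument.
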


\begin{proof}
	If $\lambda = 0$ or $\lambda = 1$, the result follows trivially, using the definitions in (\ref{5.2}). Suppose that $0 < \lambda < 1$ and let $\varepsilon_{\lambda}^R$ be defined as in (\ref{5.4}). Using hypothesis {\bf ($f_1$)}, we have
	\begin{eqnarray*}
		\varepsilon_{\lambda}^R &=& \int_{\mathbb{R}^N}f\left(w_{0,\lambda}^R\right)w_{y,1-\lambda}^R \leq A_2\!\int_{\mathbb{R}^N}\left(w_{0,\lambda}^R\right)^{2^*-1}w_{y,1-\lambda}^R.
	\end{eqnarray*}
	Since $2^*-1 > 2^*/2$, applying Lemma \ref{w_0w_y} with $\bar{\alpha}=2^*-1$ and $\bar{\beta}=1$, there exists $C>0$ such that $$\varepsilon_{\lambda}^R \leq CR^{-(N-2)}.$$
	
\end{proof}

		\begin{lemma}\label{lema5.4}
			Assume $(f_1)$, then there exists a constant $C>0$ such that $$\varepsilon_{\lambda}^R \geq C\lambda_{-}^{N}R^{-(N-2)},$$ where $\lambda_{-}:=\min\{\lambda, 1-\lambda\}$, for all  $y \in \partial B_2(y_0)$, $\lambda \in (0,1)$ and $R\geq 1$.
		\end{lemma}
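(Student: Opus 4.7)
The plan is to establish the lower bound by restricting the integration in the definition of $\varepsilon_\lambda^R$ to a fixed small ball, on which both $f\circ w$ and the dilated copy of $w$ centered at $Ry$ admit explicit pointwise lower bounds. This is the natural counterpart of the proof of Lemma~\ref{varep_m}: there one used the pointwise upper bound on $w$ in \eqref{3.1} on the entire space; here one trades it for the pointwise lower bound at the price of keeping only a small compact piece of the integral.

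First I would perform the change of variables $z=(x-Ry_0)/\lambda$ in \eqref{5.4} to rewrite
$$
\varepsilon_\lambda^R \;=\; \lambda^{N}\!\int_{\mathbb{R}^{N}} f(w(z))\, w\!\left(\frac{\lambda z + R(y_0 - y)}{1-\lambda}\right)\! dz.
$$
Since $w$ is continuous and positive with $\|w\|_\infty<\gamma$, and since $f$ is continuous and strictly positive on $(0,\gamma)$ by $(f_1)$ together with the definition of $\gamma$ in $(f_3)$, the composition $f\circ w$ is bounded below by a positive constant $c_1$ on the fixed ball $\overline{B_1(0)}$. I would restrict the integration to $|z|\leq 1$ and keep only this contribution.

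Next I would exploit $|y-y_0|=2$ (since $y\in\partial B_2(y_0)$) to bound the argument of the inner $w$: for $|z|\leq 1$ and $R\geq 1$,
$$
\left|\frac{\lambda z + R(y_0 - y)}{1-\lambda}\right| \;\leq\; \frac{1+2R}{1-\lambda}, \qquad\text{hence}\qquad 1+\left|\tfrac{\lambda z + R(y_0-y)}{1-\lambda}\right| \;\leq\; \frac{4R}{1-\lambda}.
$$
Applying the lower pointwise estimate in \eqref{3.1} then yields
$$
w\!\left(\frac{\lambda z + R(y_0 - y)}{1-\lambda}\right) \;\geq\; A_4\!\left(\frac{1-\lambda}{4R}\right)^{\!N-2},
$$
and integrating the two constants over $\overline{B_1(0)}$ delivers $\varepsilon_\lambda^R \geq C_0\,\lambda^N(1-\lambda)^{N-2}R^{-(N-2)}$ with $C_0=c_1\,A_4\,|B_1(0)|\,4^{-(N-2)}$.

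It remains to upgrade the factor $\lambda^N(1-\lambda)^{N-2}$ into $\lambda_-^N$ up to a constant, by an elementary case split. If $\lambda\leq1/2$ then $\lambda_-=\lambda$ and $(1-\lambda)^{N-2}\geq 2^{-(N-2)}$; if $\lambda\geq1/2$ then $\lambda_-=1-\lambda\leq1/2$, so $\lambda^N\geq2^{-N}$ and $(1-\lambda)^{N-2}\geq(1-\lambda)^{N}=\lambda_-^N$ (using $N\geq 3$ and $1-\lambda\leq 1$). In both cases one obtains $\lambda^N(1-\lambda)^{N-2}\geq 2^{-N}\lambda_-^N$, which combined with the previous estimate gives the claim. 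The one step requiring care is the bound on the argument of the inner $w$: the factor $1-\lambda$ in the denominator must be retained rather than absorbed into a crude inequality, since it is precisely what produces the $(1-\lambda)^{N-2}$ via \eqref{3.1} that the final case analysis converts into the $\lambda_-^N$ of the statement.
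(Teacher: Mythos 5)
Your proof is correct and follows essentially the same route as the paper: change of variables $z=(x-Ry_0)/\lambda$, restriction to the unit ball where $f\circ w$ is bounded below (using $\|w\|_\infty<\gamma$), bounding the argument of the inner $w$ by $O(R/(1-\lambda))$, and applying the lower decay estimate \eqref{3.1}. The only minor difference is cosmetic: you convert $\lambda^N(1-\lambda)^{N-2}$ to $\lambda_-^N$ via a direct case split on $\lambda\lessgtr 1/2$, whereas the paper first uses $(1-\lambda)^{N-2}\geq(1-\lambda)^N$ and then $\lambda(1-\lambda)\geq\lambda_-/2$; both yield the same bound.
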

		
		\begin{proof}
			First note that, for every $R \geq 1$, if $z \in B_1(0)$, it holds 
			\begin{eqnarray}\label{desi}
				1+\left|\frac{\lambda z}{1-\lambda}-\frac{R(y-y_0)}{1-\lambda}\right| \!\!\!&=&\!\!\!\! 1+\frac{\lambda}{1-\lambda}\left|z-\frac{R(y-y_0)}{\lambda}\right| \leq\! 1+\frac{\lambda}{1-\lambda}\left(1+\frac{2R}{\lambda}\right) \leq\! \frac{3R}{1-\lambda}.
		\end{eqnarray}
			Furthermore, the estimate $\Vert w\Vert_\infty < \gamma$, for the constant $\gamma$ which appears in $(f_3)$, is given in \cite[Theorem 2]{Tang}. 
			So, there exists a constant $C>0$ such that $f(w(z)) \geq C$, for all $z \in B_1(0)$.
			Thus, a change of variables $z=(x-Ry_0)/{\lambda}$ and  (\ref{3.1}) and (\ref{desi}), yield
			\begin{eqnarray*}
			\varepsilon_{\lambda}^R &=& \displaystyle\int_{\mathbb{R}^N}f\!\left(\!w\!\left(\frac{x-Ry_0}{\lambda}\right)\right)w\!\left(\frac{x-Ry}{1-\lambda}\right) = \lambda^N\!\displaystyle\int_{B_1(0)}f(w(z))\,w\!\left(\frac{\lambda z}{1-\lambda}-\frac{R(y-y_0)}{1-\lambda}\!\right) \\
				&\geq& C\left(\frac{\lambda(1-\lambda)}{3}\right)^{\!N}\left|B_1(0)\right|R^{-(N-2)} \geq C\lambda_-^{\!N} R^{-(N-2)},
			\end{eqnarray*}
			since $\lambda_-/2 \leq \lambda(1-\lambda)$ and the desired inequality follows. 
		\end{proof}
Observe that the lower bound obtained for $\varepsilon_{\lambda}^R$	depends on $\lambda$, while the upper bound is uniform for all $\lambda$ in $[0,1]$. However, in any closed sub-interval	of $(0,1)$ the upper and lower bounds for $\varepsilon_{\lambda}^R$ are independent of $\lambda$. This is going to be crucial in the end.

Anologously, the same upper and lower bounds are obtained for the integral
\[
\int_{\mathbb{R}^N}f\!\left(w_{y,1-\lambda}^R\right)w_{0,\lambda}^R\,dx= O(\varepsilon_{\lambda}^R).
\]	
	
	The next lemma presents the order of interaction between the gradients of two translated solitons.
	\begin{lemma}\label{gradw_0.gradw_y}
		For every $R \geq 1$, $y \in \partial B_2(y_0)$ and $\lambda \in [0,1]$, there exists a constant $C=C(\lambda) > 0$ such that 
		\begin{eqnarray}\label{equivlema57}
			\int_{\mathbb{R}^N}\nabla w_{0,\lambda}^R \cdot \nabla w_{y,1-\lambda}^R\,dx \leq CR^{-(N-2)}.
		\end{eqnarray}
	\end{lemma}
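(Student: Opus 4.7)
The plan is to mimic the proof of Lemma \ref{w_0w_y}, replacing the pointwise decay \eqref{3.1} on $w$ by the gradient decay \eqref{3.2}, and then applying Lemma \ref{ClappM}(a) as a black box. The cases $\lambda=0$ or $\lambda=1$ are trivial by the convention \eqref{5.2}, so throughout I assume $\lambda\in(0,1)$.

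First I would use the chain rule to write
$$\nabla w_{0,\lambda}^R(x)=\frac{1}{\lambda}(\nabla w)\!\left(\tfrac{x-Ry_0}{\lambda}\right), \qquad \nabla w_{y,1-\lambda}^R(x)=\frac{1}{1-\lambda}(\nabla w)\!\left(\tfrac{x-Ry}{1-\lambda}\right).$$
Combining Cauchy-Schwarz with the gradient estimate \eqref{3.2} gives the pointwise bound
$$|\nabla w_{0,\lambda}^R\cdot\nabla w_{y,1-\lambda}^R|\leq \frac{A_6^2}{\lambda(1-\lambda)}\left(1+\left|\tfrac{x-Ry_0}{\lambda}\right|\right)^{-(N-1)}\left(1+\left|\tfrac{x-Ry}{1-\lambda}\right|\right)^{-(N-1)}.$$
Invoking the elementary inequalities \eqref{deslambda} (which hold precisely because $\lambda\in(0,1)$) allows me to strip the $\lambda$ and $1-\lambda$ from the denominators inside the parentheses, yielding
$$|\nabla w_{0,\lambda}^R\cdot\nabla w_{y,1-\lambda}^R|\leq \frac{A_6^2}{\lambda(1-\lambda)}(1+|x-Ry_0|)^{-(N-1)}(1+|x-Ry|)^{-(N-1)}.$$

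Since $y\in\partial B_2(y_0)$ ensures $y\neq y_0$, and since $N\geq 3$ gives $\alpha+\beta=2(N-1)>N$, I can apply Lemma \ref{ClappM}(a) with $\alpha=\beta=N-1$. In this case $\mu=\min\{N-1,N-1,2(N-1)-N\}=N-2$, so integrating the pointwise inequality yields
$$\int_{\mathbb{R}^N}\nabla w_{0,\lambda}^R\cdot\nabla w_{y,1-\lambda}^R\,dx\leq \frac{C_1 A_6^2}{\lambda(1-\lambda)}\,R^{-(N-2)}=:C(\lambda)\,R^{-(N-2)},$$
which is the claim. There is no real obstacle here; the only point worth noting is that the constant $C(\lambda)$ degenerates as $\lambda\to 0$ or $\lambda\to 1$, but this is harmless because the statement only demands $\lambda$-dependence, and the problematic endpoints are handled separately by the definitions in \eqref{5.2}. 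As in the remark after Lemma \ref{lema5.4}, on any closed subinterval of $(0,1)$ the bound is in fact uniform in $\lambda$.
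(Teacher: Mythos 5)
Your proof is correct and follows essentially the same route as the paper: chain rule on the dilated solitons to extract the $1/(\lambda(1-\lambda))$ factor, the gradient decay \eqref{3.2} together with \eqref{deslambda} to reduce to the integral of $(1+|x-Ry_0|)^{-(N-1)}(1+|x-Ry|)^{-(N-1)}$, and then Lemma \ref{ClappM}(a) with $\alpha=\beta=N-1$, $\mu=N-2$. The only difference is cosmetic — you make the Cauchy--Schwarz step and the $\lambda$-dependence of the constant explicit, which the paper leaves implicit.
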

	
	\begin{proof}
		If $\lambda = 0$ or $\lambda = 1$, the result follows trivially, using the definitions (\ref{5.2}). Suppose now that $\lambda \in (0,1)$ and observe that, taking the derivatives and using (\ref{3.2}) and (\ref{deslambda})
		\begin{eqnarray*}
			\int_{\mathbb{R}^N}\nabla w_{0,\lambda}^R \cdot \nabla w_{y,1-\lambda}^R\,dx &=& \frac{1}{\lambda(1-\lambda)}\int_{\mathbb{R}^N}\nabla w\!\left(\frac{x-Ry_0}{\lambda}\right) \cdot \nabla w\!\left(\frac{x-Ry}{1-\lambda}\right)dx \\
		&\leq& \frac{C}{\lambda(1-\lambda)}\!\int_{\mathbb{R}^N}\left(1+\left|x-Ry_0\right|\right)^{-(N-1)}\left(1+\left|x-Ry\right|\right)^{-(N-1)}dx.
		\end{eqnarray*}
	By Lemma $\ref{ClappM}$ $(a)$, with $\alpha=\beta=N-1$, so that $\mu=N-2$, the inequality $(\ref{equivlema57})$ follows and the lemma is proved.
		\end{proof}
	We will need the following estimates adapted from a result in \cite[Lemma 2.2]{Ack-Clapp-Pa}.

	\begin{lemma}\label{A-Clapp-P}
	Assume {\bf ($f_1$)}, then there exists $\sigma \in\left(1/2,1\right]$ with the following property: for any given $C_5\geq 1$ there is a constant $C_6>0$ such that the inequalities $$|f(u+v)-f(u)-f(v)| \leq C_6 |uv|^\sigma$$ and $$|F(u+v)-F(u)-F(v)-f(u)v-f(v)u|\leq C_6|uv|^{2\sigma}$$ hold true for all $u,v \in \mathbb{R}$, with $|u|,|v| \leq C_5$.
\end{lemma}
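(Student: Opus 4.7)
The plan is to extract from $(f_1)$ and $(f_2)$ a single Hölder regularity estimate on $f'$ restricted to the bounded interval $[-2C_5, 2C_5]$, and then apply one symmetric ``geometric-mean'' argument to obtain both inequalities with the same exponent. Throughout I work with the odd $C^1(\mathbb{R})$ extension of $f$; note $f(0) = 0$ by hypothesis and $f'(0) = 0$ by \eqref{hopital} applied with $i=1$.

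\emph{Regularity of $f'$.} From $(f_1)$ one has $|f''(s)| \leq A_2 |s|^{2^*-3}$ for $s \neq 0$. If $N \leq 6$, then $2^* - 3 \geq 0$ and $f''$ is bounded on $[-2C_5, 2C_5]$, so $f'$ is Lipschitz there. If $N \geq 7$, then $0 < 2^* - 2 < 1$ and $|f''|$ has only an integrable singularity at the origin; using $f'(0) = 0$, the elementary inequality $t^p - s^p \leq (t-s)^p$ valid for $0 < p \leq 1$ and $0 \leq s < t$, and the fact that $f'$ is even (so differences across the origin split as $|f'(a) - f'(b)| \leq |f'(|a|)| + |f'(|b|)|$), one obtains a constant $L = L(C_5) > 0$ and an exponent $\theta := \min\{1, 2^* - 2\} \in (0, 1]$ with
\[
|f'(a) - f'(b)| \leq L |a - b|^\theta, \qquad |f'(a)| \leq L |a|^\theta, \qquad a, b \in [-2C_5, 2C_5].
\]
I then set $\sigma := \tfrac{1}{2} + \tfrac{\theta}{4} \in (\tfrac{1}{2}, \tfrac{3}{4}] \subset (\tfrac{1}{2}, 1]$.

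\emph{First inequality.} From $f(u+v) - f(u) - f(v) = \int_0^1 [f'(u + tv) - f'(tv)] v \, dt$ and the Hölder bound, $|f(u+v) - f(u) - f(v)| \leq L |u|^\theta |v|$; swapping $u \leftrightarrow v$ yields also $\leq L |v|^\theta |u|$. The geometric mean produces $|f(u+v) - f(u) - f(v)| \leq L |uv|^{(1+\theta)/2}$, and since $(1+\theta)/2 \geq \sigma$ and $|uv| \leq C_5^2$, the excess power can be absorbed into the constant, giving the claim.

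\emph{Second inequality.} Starting from $F(u+v) - F(u) - F(v) = \int_0^v [f(s+u) - f(s)]\,ds$ (change of variable) and using $f(s+u) - f(s) - f(u) - u f'(s) = \int_0^u [f'(s+r) - f'(s) - f'(r)]\,dr$ (consequence of $f(x) = \int_0^x f'$ and $f(0) = 0$), together with $f(v)u = \int_0^v u f'(s)\,ds$, a direct Fubini-style rearrangement yields the symmetric representation
\[
F(u+v) - F(u) - F(v) - f(u) v - f(v) u = \int_0^v \int_0^u \bigl[ f'(s+r) - f'(s) - f'(r) \bigr] \, dr \, ds.
\]
Denote the integrand $h(s,r)$. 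By Step 1, $|h(s,r)| \leq |f'(s+r) - f'(s)| + |f'(r)| \leq 2L|r|^\theta$ and, by symmetry, $\leq 2L|s|^\theta$. The geometric mean gives $|h(s,r)| \leq 2L|rs|^{\theta/2}$, and integrating over (the signed rectangle) $[0,u] \times [0,v]$ produces $|F(u+v) - F(u) - F(v) - f(u)v - f(v)u| \leq C |uv|^{1 + \theta/2} = C |uv|^{2\sigma}$, as required.

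\emph{Main obstacle.} The delicate step is the Hölder estimate on $f'$ in dimensions $N \geq 7$, where $f''$ genuinely blows up at the origin and only its singularity is integrable; nevertheless, the strict positivity of $2^* - 2 = 4/(N-2)$ for every admissible $N \geq 3$ guarantees $\theta > 0$, and hence $\sigma > 1/2$. This strict inequality is precisely what feeds into the later interaction estimates on $\varepsilon_\lambda^R$ and $G(u,v)$.
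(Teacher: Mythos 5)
Your proof is internally correct and establishes the lemma as literally stated: the integral representations for $f(u+v)-f(u)-f(v)$ and for $F(u+v)-F(u)-F(v)-f(u)v-f(v)u$ are right, the derivation of the H\"older estimate $|f'(a)-f'(b)|\leq L|a-b|^{\theta}$ with $\theta=\min\{1,2^*-2\}$ via $(f_1)$ for $i=2$ is sound (including the opposite-sign case using evenness of $f'$), and the geometric-mean and integration steps do yield the two inequalities with your $\sigma=\tfrac12+\tfrac{\theta}{4}$.

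However, this is a genuinely weaker choice than the paper's, and the paper's larger $\sigma$ is not cosmetic. The paper sets $\sigma=\min\{2^*/4,1\}$, which is obtained by pushing the regularity one step higher: write $h(s,r)=f'(s+r)-f'(s)-f'(r)=\int_0^r\bigl[f''(s+t)-f''(t)\bigr]\,dt$ and use H\"older continuity of $f''$ (coming from $(f_1)$ with $i=3$, i.e.\ the bound on $f'''$), rather than only the H\"older continuity of $f'$. Concretely, for $N\leq 6$ one has $|f''(a)-f''(b)|\leq L'|a-b|^{\theta'}$ with $\theta'=\min\{1,2^*-3\}\geq 0$, whence $|h(s,r)|\leq L'|s|^{\theta'}|r|$ and, by symmetry and the geometric mean, $|h(s,r)|\leq L'|sr|^{(1+\theta')/2}$, giving $2\sigma=1+\tfrac{1+\theta'}{2}$, i.e.\ $\sigma=\tfrac{3+\theta'}{4}=\min\{2^*/4,1\}$; for $N\geq 7$ (where $f''$ is singular and this breaks down) your route with $\theta=2^*-2$ already gives $\sigma=2^*/4$ and the two coincide. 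Your $\sigma\leq 3/4$ agrees with the paper's only for $N\geq 6$ and is strictly smaller for $N=3,4,5$.

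This matters because the $\sigma$ produced here is consumed by Corollary~\ref{F-ACP-estimate}, whose proof for $N\geq 4$ requires $\mu=\min\{2\sigma(N-2),\,4\sigma(N-2)-N\}>N-2$, i.e.\ $\sigma>(N-1)/(2(N-2))$. With your $\sigma=3/4$ in dimension $N=4$ one gets $\mu=2=N-2$, so the interaction integral is only $O(\varepsilon_\lambda^R)$, not $o(\varepsilon_\lambda^R)$, and the corollary's argument fails. You should therefore replace your first step in dimensions $N\leq 6$ by the H\"older estimate for $f''$ (exploiting $i=3$ of $(f_1)$), which is precisely what the paper intends by invoking the full range $i=1,2,3$ before citing the Ackermann--Clapp--Pacella computation, and what yields the sharper $\sigma=\min\{2^*/4,1\}$ used later.
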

\begin{proof} Hypothesis ($f_1$) implies there exists a constant $C>0$ such that $\left|f^{(i)}(s)\right| \leq C|s|^{2^*-(i+1)}$, for $i=1, 2, 3$, and $|s| \leq C_5$.
	Set $\sigma:=\min\left\{{2^*}/{4},1\right\}=\min\left\{{N}/{(2(N-2))},1\right\} \in ({1}/{2}, 1]$. The proof of the inequalities follows by simple calculations as in \cite{Ack-Clapp-Pa}.
	\end{proof}
Let us define the sum of the two translated solitons
\begin{equation}\label{5.3}
U_{y,\lambda}^R:=w_{0,\lambda}^R+w_{y,1-\lambda}^R
\end{equation}
and present some of its properties and estimates.

\begin{corollary}\label{F-ACP-estimate}
	Assume {\bf ($f_1$)--($f_2$)}. Then, it holds
	\begin{equation}\label{ACP-estimate}
\int_{\mathbb{R}^N} F(U_{y,\lambda}^R) - F(w_{0,\lambda}^R) - F(w_{y,1-\lambda}^R) - f(w_{0,\lambda}^R)w_{y,1-\lambda}^R - f(w_{y,1-\lambda}^R)w_{0,\lambda}^R \;dx= o(\varepsilon_{\lambda}^R).
	\end{equation}
\end{corollary}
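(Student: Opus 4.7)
The plan is to deduce the corollary directly from the pointwise estimate in Lemma \ref{A-Clapp-P}, then integrate the resulting bound and exploit the decay \eqref{3.1} of the ground state $w$.

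First I would invoke Tang's bound $\|w\|_\infty < \gamma$ (recalled just after \eqref{3.2}); since the $L^\infty$-norm is invariant under the dilations/translations defining $w_{0,\lambda}^R$ and $w_{y,1-\lambda}^R$, this supplies a common $L^\infty$ bound $C_5 := \|w\|_\infty$ for both functions, independent of $R \geq 1$, $y \in \partial B_2(y_0)$ and $\lambda \in [0,1]$. Setting $u := w_{0,\lambda}^R(x)$ and $v := w_{y,1-\lambda}^R(x)$ and recalling $U_{y,\lambda}^R = u+v$ from \eqref{5.3}, the second inequality of Lemma \ref{A-Clapp-P} applies pointwise and gives, for some $\sigma \in (1/2, 1]$ and $C_6 > 0$,
\begin{equation*}
\bigl|F(U_{y,\lambda}^R) - F(w_{0,\lambda}^R) - F(w_{y,1-\lambda}^R) - f(w_{0,\lambda}^R)w_{y,1-\lambda}^R - f(w_{y,1-\lambda}^R)w_{0,\lambda}^R\bigr|(x) \leq C_6 \bigl(w_{0,\lambda}^R(x)\, w_{y,1-\lambda}^R(x)\bigr)^{2\sigma}.
\end{equation*}

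Integrating over $\mathbb{R}^N$, the corollary reduces to the estimate
\begin{equation*}
\int_{\mathbb{R}^N}\bigl(w_{0,\lambda}^R\, w_{y,1-\lambda}^R\bigr)^{2\sigma}\,dx = o(\varepsilon_\lambda^R) \quad \text{as } R \to \infty.
\end{equation*}
I would bound the integrand using the pointwise decay $w(z) \leq A_5(1+|z|)^{-(N-2)}$ together with the elementary inequality \eqref{deslambda} (the same step that drives the proof of Lemma \ref{w_0w_y}), to reach the integrable weight $(1+|x-Ry_0|)^{-2\sigma(N-2)}(1+|x-Ry|)^{-2\sigma(N-2)}$. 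Applying Lemma \ref{ClappM}(a) with $\alpha = \beta = 2\sigma(N-2)$ then yields a decay rate $R^{-\mu}$ with $\mu = \min\{2\sigma(N-2),\,4\sigma(N-2)-N\}$, and combining this with the lower bound $\varepsilon_\lambda^R \geq c\lambda_-^N R^{-(N-2)}$ from Lemma \ref{lema5.4} delivers the desired $o(\varepsilon_\lambda^R)$.

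The decisive point of the argument, and the step I expect to be most delicate, is verifying that $\mu$ strictly exceeds $N-2$, since only then does $R^{-\mu}$ become negligible compared to $\varepsilon_\lambda^R \sim R^{-(N-2)}$. The condition $2\sigma(N-2) > N-2$ is immediate from $\sigma > 1/2$, but $4\sigma(N-2) - N > N-2$ requires $\sigma > (N-1)/(2(N-2))$, which holds because the explicit value $\sigma = \min\{2^*/4, 1\}$ coming from the proof of Lemma \ref{A-Clapp-P} clears this threshold. This is precisely why the strict lower bound $\sigma > 1/2$ built into Lemma \ref{A-Clapp-P} is essential rather than cosmetic, and is the ingredient that will carry over the whole scheme.
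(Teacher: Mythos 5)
There is a genuine gap, and it occurs at exactly the step you singled out as decisive. You claim that $\sigma = \min\{2^*/4,\,1\}$ always strictly exceeds $(N-1)/(2(N-2))$, so that $\mu = \min\{2\sigma(N-2),\,4\sigma(N-2)-N\} > N-2$. This is true for $N \geq 4$ but \emph{false} for $N=3$: there $\sigma = 1$ and $(N-1)/(2(N-2)) = 1$, so equality holds, giving $\mu = \min\{2,\,1\} = 1 = N-2$. Lemma \ref{ClappM}(a) therefore only yields $\int (w_0 w_y)^{2\sigma}\,dx \leq C R^{-1}$, which is $O(\varepsilon_\lambda^R)$, not $o(\varepsilon_\lambda^R)$, and the conclusion fails to follow from Lemma \ref{A-Clapp-P} alone in dimension three. (Note also that the built-in bound $\sigma > 1/2$ only handles the branch $2\sigma(N-2) > N-2$; it says nothing about the second branch $4\sigma(N-2)-N$, which is the binding one here.)

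The paper's proof makes precisely this split. For $N \geq 4$ it runs your argument verbatim. For $N = 3$ it discards the generic bound $|w_0 w_y|^{2\sigma}$ and instead exploits the full $C^3$ regularity of $f$ from $(f_1)$ (taking $i=3$ in the proof of Lemma \ref{A-Clapp-P}) to obtain the sharper pointwise estimate
\[
|F(U) - F(w_0) - F(w_y) - f(w_0)w_y - f(w_y)w_0|
\leq C\left[w_0^4 w_y^2 + w_0^3 w_y^3 + w_0^2 w_y^4\right],
\]
where each monomial has both exponents $\geq 2$ and total degree $2^* = 6$. Integrating with the decay \eqref{3.1} and Lemma \ref{ClappM}(a) gives $R^{-2}$, which is $o(R^{-1}) = o(\varepsilon_\lambda^R)$. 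To repair your proof you would need to add this refined Taylor expansion for $N=3$; as written, your argument establishes the corollary only for $N \geq 4$.
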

\begin{proof}
		For simplicity, set $w_0:=w_{0,\lambda}^R$, $w_y:=w_{y,1-\lambda}^R$ and $U:=U^R_{y, \lambda}$.
		If $N \geq 4$, then $\sigma=\min\left\{{2^*}/{4},1\right\}= {2^*}/{4}={N}/{(2(N-2))}$ and $\mu= \min\left\{2 \sigma (N-2), 4 \sigma (N-2) -N\right\} > N-2$. Thus,  
			Lemmas \ref{w_0w_y}, \ref{varep_m} and \ref{lema5.4} give that $|w_0 w_y|^{2\sigma} \leq C R^{-\mu}= o(\varepsilon_{\lambda}^R)$.
			
		The case $N=3$ is a little more delicate since $\sigma =1$ and $\mu=1$, which gives $|w_0 w_y|^{2\sigma} \leq C R^{-1}= O(\varepsilon_{\lambda}^R)$. Using hypothesis $(f_1)$ for $i=3$ in the proof of Lemma \ref{A-Clapp-P}, in fact we can obtain $C>0$ such that
		\[
		|F(U) - F(w_0) - F(w_y) - f(w_0)w_y - f(w_y)w_0|
		\leq \; C \left[w_0^4w_y^2 +w_0^3w_y^3 + w_0^2w_y^4\right]
	\leq  \; C R^{-2}= o(\varepsilon_{\lambda}^R),
		\]
		which yields \eqref{ACP-estimate}, and the proof is complete.
	\end{proof}

	\begin{lemma}\label{lema.aux}
		Assume {\bf ($V_1$)--($V_2$)} and {\bf ($f_1$)--($f_2$)}. Then,  the following statements hold: \\
		$(a)$ $\displaystyle\int_{\mathbb{R}^N}|\nabla U^R_{y,\lambda}|^2dx = C_\lambda^{N-2}\!\!\int_{\mathbb{R}^N}|\nabla w|^2dx + o_R(1)$;\\
	$(b)$ $\displaystyle\int_{\mathbb{R}^N}F(U^R_{y, \lambda})dx = C_\lambda^{N}\!\!\int_{\mathbb{R}^N}F(w)dx + o_R(1)$,\\
			%
		where $C_\lambda^j:=\lambda^{j}+(1-\lambda)^{j}$ and $o_R(1) \to 0$ as $R \to +\infty$, uniformly for all $y \in \partial B_2(y_0)$ and $\lambda \in [0,1]$.
	\end{lemma}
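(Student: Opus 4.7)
The plan is to expand $U^R_{y,\lambda}=w^R_{0,\lambda}+w^R_{y,1-\lambda}$, collect the diagonal terms (which rescale to give the factors $C_\lambda^{N-2}$ and $C_\lambda^{N}$) and show that the cross terms are uniformly $o_R(1)$.

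For part $(a)$ I would expand $|\nabla U^R_{y,\lambda}|^{2}=|\nabla w^R_{0,\lambda}|^{2}+2\,\nabla w^R_{0,\lambda}\cdot\nabla w^R_{y,1-\lambda}+|\nabla w^R_{y,1-\lambda}|^{2}$. The substitutions $z=(x-Ry_{0})/\lambda$ and $z=(x-Ry)/(1-\lambda)$ give
\[
\int_{\mathbb{R}^{N}}|\nabla w^R_{0,\lambda}|^{2}\,dx=\lambda^{N-2}\!\int_{\mathbb{R}^{N}}|\nabla w|^{2}\,dx,\qquad \int_{\mathbb{R}^{N}}|\nabla w^R_{y,1-\lambda}|^{2}\,dx=(1-\lambda)^{N-2}\!\int_{\mathbb{R}^{N}}|\nabla w|^{2}\,dx,
\]
which sum to $C_\lambda^{N-2}\!\int|\nabla w|^{2}$. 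The cross term is controlled by Lemma \ref{gradw_0.gradw_y} for $\lambda$ bounded away from $\{0,1\}$, and at the endpoints the identity holds trivially via \eqref{5.2}.

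For part $(b)$ I would apply Corollary \ref{F-ACP-estimate} to write
\[
\int_{\mathbb{R}^{N}}\!F(U^R_{y,\lambda})\,dx=\int_{\mathbb{R}^{N}}\!F(w^R_{0,\lambda})\,dx+\int_{\mathbb{R}^{N}}\!F(w^R_{y,1-\lambda})\,dx+\int_{\mathbb{R}^{N}}\!\bigl[f(w^R_{0,\lambda})w^R_{y,1-\lambda}+f(w^R_{y,1-\lambda})w^R_{0,\lambda}\bigr]dx+o(\varepsilon_\lambda^R).
\]
The same rescalings yield $\int F(w^R_{0,\lambda})=\lambda^{N}\!\int F(w)$ and $\int F(w^R_{y,1-\lambda})=(1-\lambda)^{N}\!\int F(w)$, summing to $C_\lambda^{N}\!\int F(w)$. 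The first interaction integral is exactly $\varepsilon_\lambda^R$ by \eqref{5.4} and, by the remark following Lemma \ref{lema5.4}, the second is $O(\varepsilon_\lambda^R)$; thus, by Lemma \ref{varep_m}, both are $O(R^{-(N-2)})$ uniformly in $y\in\partial B_{2}(y_{0})$ and $\lambda\in[0,1]$. Hence the total error is $O(R^{-(N-2)})+o(\varepsilon_\lambda^R)=o_R(1)$, settling $(b)$.

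The delicate point is the uniformity in $\lambda\in[0,1]$ of the cross term in $(a)$, because the constant $C(\lambda)$ in Lemma \ref{gradw_0.gradw_y} degenerates like $1/(\lambda(1-\lambda))$ at the endpoints. I would treat it by a case split: Cauchy--Schwarz combined with the exact rescalings yields
\[
\Bigl|\int_{\mathbb{R}^{N}}\nabla w^R_{0,\lambda}\cdot\nabla w^R_{y,1-\lambda}\,dx\Bigr|\leq \bigl(\lambda(1-\lambda)\bigr)^{(N-2)/2}\|\nabla w\|_{2}^{2},
\]
so for each $\varepsilon>0$ this bound is below $\varepsilon$ on neighborhoods $[0,\delta]\cup[1-\delta,1]$ of the endpoints, independently of $R$ and $y$, provided $\delta$ is chosen small. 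On the complementary sub-interval $[\delta,1-\delta]$ the constant of Lemma \ref{gradw_0.gradw_y} is uniformly bounded by some $C_\delta$, and taking $R$ large makes $C_\delta R^{-(N-2)}<\varepsilon$ there as well. Combining the two ranges gives $o_R(1)$ uniformly in $\lambda$ and $y$, completing the plan.
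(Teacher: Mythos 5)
Your proof is correct and follows the same overall route as the paper: expand the square (resp.\ apply Corollary~\ref{F-ACP-estimate}), rescale the diagonal terms to produce $C_\lambda^{N-2}\|\nabla w\|_2^2$ (resp.\ $C_\lambda^N\!\int F(w)$), and control the interaction terms. What you add, and what the paper's proof elides, is a genuine fix for the uniformity in $\lambda$ of the gradient cross term. The paper's proof of part $(a)$ simply invokes Lemma~\ref{gradw_0.gradw_y} and writes ``there exists $C>0$ such that $\int \nabla w_0\cdot\nabla w_y \leq C R^{-(N-2)}$,'' but that lemma's constant is $C(\lambda)$ and its own proof produces a factor $\frac{1}{\lambda(1-\lambda)}$, which blows up at the endpoints; as written, the uniformity claim in the statement of Lemma~\ref{lema.aux} is not justified by the paper. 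Your two-range argument --- Cauchy--Schwarz with the exact rescaling giving the $R$-independent bound $(\lambda(1-\lambda))^{(N-2)/2}\|\nabla w\|_2^2$ near the endpoints, and the $C_\delta R^{-(N-2)}$ bound on a compact sub-interval $[\delta,1-\delta]$ --- is exactly the right repair and recovers the uniform $o_R(1)$. For part $(b)$ your phrasing ``$O(R^{-(N-2)}) + o(\varepsilon_\lambda^R) = o_R(1)$'' is a little loose, since a term that is $o(\varepsilon_\lambda^R)$ for each fixed $\lambda$ need not be uniformly $o_R(1)$ if $\varepsilon_\lambda^R$ degenerates near $\lambda \in \{0,1\}$; the clean way to state it (which is also what the paper's proof of $(b)$ actually uses) is that the bound produced inside Corollary~\ref{F-ACP-estimate} is $CR^{-\mu}$ with $\mu > N-2$ (for $N\ge 4$), or $CR^{-2}$ (for $N=3$), uniformly in $y$ and $\lambda$, so the whole error is $O(R^{-(N-2)})$ uniformly. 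With that cosmetic adjustment, your argument is complete and in fact slightly tighter than the published one.
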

	
	\begin{proof}
		For simplicity, set $w_0:=w_{0,\lambda}^R$, $w_y:=w_{y,1-\lambda}^R$ and $U:=U^R_{y, \lambda}$. If $\lambda = 0$ or $\lambda = 1$, the statements follow trivially for all $y \in \partial B_2(y_0)$ and $o_R(1)=0$, using (\ref{5.2}) and (\ref{5.3}). Suppose now that $0 < \lambda < 1$, then we have
		\begin{eqnarray*}
			\int_{\mathbb{R}^N}|\nabla U|^2dx
			&=& \lambda^{N-2}\!\!\int_{\mathbb{R}^N}|\nabla w|^2dx + (1-\lambda)^{N-2}\!\!\int_{\mathbb{R}^N}|\nabla w|^2dx + 2\!\int_{\mathbb{R}^N}\nabla w_0 \cdot \nabla w_ydx \\
			&=& C_\lambda^{N-2}\!\!\int_{\mathbb{R}^N}|\nabla w|^2dx + 2\!\int_{\mathbb{R}^N}\nabla w_0 \cdot \nabla w_ydx.
		\end{eqnarray*}
	 By Lemma \ref{gradw_0.gradw_y}, there exists  $C > 0$ such that
		\begin{eqnarray*}
			\int_{\mathbb{R}^N}\nabla w_0 \cdot \nabla w_y\,dz \leq CR^{-(N-2)},
		\end{eqnarray*}
	 proving item { (a)}. We also have
		\begin{align*}
		& \int_{\mathbb{R}^N}F(U)dx - C_\lambda^{N}\!\!\int_{\mathbb{R}^N}F(w)dx = \int_{\mathbb{R}^N}F(U)dx - \lambda^N\!\!\int_{\mathbb{R}^N}F(w)dx - (1-\lambda)^N\!\!\int_{\mathbb{R}^N}F(w)dx \\
	& \qquad\qquad\qquad = \int_{\mathbb{R}^N}[F(U) - F(w_0) - F(w_y) - f(w_0)w_y - f(w_y)w_0]dx \\
		& \qquad\qquad\qquad \ \ \, + \int_{\mathbb{R}^N}[f(w_0)w_y + f(w_y)w_0]dx.
		\end{align*}
		From \eqref {5.4}, \eqref{desiepsilon} and \eqref{ACP-estimate}, there exists $C > 0$ such that $$\int_{\mathbb{R}^N}\left|F(U) - F(w_0) - F(w_y) - f(w_0)w_y - f(w_y)w_0\right|dx \leq CR^{-(N-2)},$$ $$\int_{\mathbb{R}^N}[f(w_0)w_y + f(w_y)w_0]dx = 2\varepsilon_{\lambda}^{R} \leq CR^{-(N-2)},$$ for every $y \in \partial B_2(y_0)$, $\lambda \in (0,1)$ and $R \geq 1$ so {(b)} follows, concluding the proof of the lemma.
		
	\end{proof}

	\begin{lemma}\label{unic.s}
		Assume that {\bf ($V_1$)--($V_3$)} and {\bf ($f_1$)--($f_2$)} hold true. Then, there exists $R_0>1$ such that given $y \in \partial B_2(y_0)$, $\lambda \in [0,1]$ and $R \geq R_0$, there exists a unique positive constant $s:=S_{y,\lambda}^{R}$ such that $$U_{y,\lambda}^R\!\left(\frac{\cdot}{s}\right) \in \mathcal{P}_V.$$  Moreover, there exist $\sigma_0 \in (0, 1)$ and $S_0 >1$ such that $S_{y,\lambda}^{R} \in (\sigma_0,S_0)$ for any $y \in \partial B_2(y_0)$, $\lambda \in [0,1]$ and $R \geq R_0$. In addition, $S_{y,\lambda}^{R}$ is a continuous function of the variables $y$, $\lambda$ and $R$.
	\end{lemma}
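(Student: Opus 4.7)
Fix $y\in\partial B_2(y_0)$, $\lambda\in[0,1]$ and $R\geq 1$, and set $u:=U^R_{y,\lambda}$. For $s>0$, let $u_s(x):=u(x/s)$ and $\phi(s):=J_V(u_s)$. A change of variable $y=x/s$ yields
\[
\phi(s)=s^{N-2}\mathcal{A}(u)+s^{N}\mathcal{B}(u,s)-s^{N}\mathcal{F}(u),
\]
with $\mathcal{A}(u):=\tfrac{N-2}{2}\|\nabla u\|_2^2$, $\mathcal{F}(u):=N\int F(u)$, and
$\mathcal{B}(u,s):=\tfrac{N}{2}\!\int [V(sy)+N^{-1}\nabla V(sy)\cdot(sy)]u^2(y)\,dy$.
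By Lemma \ref{lema.aux} combined with the Pohozaev identity for $w$, one has $\mathcal{A}(u)=\tfrac{N-2}{2}C_\lambda^{N-2}\|\nabla w\|^2+o_R(1)$ and $\mathcal{F}(u)=NC_\lambda^N\!\int F(w)+o_R(1)$; since $C_\lambda^j=\lambda^j+(1-\lambda)^j\in[2^{1-j},1]$ on $[0,1]$ and $\int F(w)>0$, both $\mathcal{A}(u)$ and $\mathcal{F}(u)$ lie between two positive constants, uniformly in $(y,\lambda)$ once $R$ is large enough.

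The central technical step is to show that $\mathcal{B}(u,s)$ and $\partial_s\mathcal{B}(u,s)$ tend to $0$ as $R\to\infty$, uniformly for $(y,\lambda)\in\partial B_2(y_0)\times[0,1]$ and $s$ in any compact subset of $(0,\infty)$. Using $(V_2)$ (and $(V_5)$ together with \eqref{1.4} for the derivative) to bound the integrands by $(1+|sy|)^{-k}$ times $u^2(y)$ or $|y|u^2(y)$, the decay \eqref{3.1} for $w$, and the elementary inequality $1+|y-Ry_0|/\lambda\geq 1+|y-Ry_0|$ for $\lambda\in(0,1]$ (and its analogue for $1-\lambda$) to strip off the inner scalings, the task reduces to estimating integrals of the form $\int(1+|sy|)^{-k}(1+|y-Ry_0|)^{-2(N-2)}\,dy$ and its counterpart centered at $Ry$, to which Lemma \ref{ClappM}(a) applies (after a single translation), giving $O(R^{-\mu})$ with $\mu>0$. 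The edge cases $\lambda\in\{0,1\}$ are immediate, one of the bumps being zero. A plain H\"older estimate using $V,\,\nabla V\cdot x\in L^{N/2}$ (Remark \ref{obs1.2}) also yields the crude upper bound $|\mathcal{B}(u,s)|\lesssim s^{-2}$ as $s\to\infty$, with implied constant uniform in $u$.

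Existence of a zero of $\phi$ then follows from $\phi(s)>0$ for $s$ small (apply Proposition \ref{lema3.1}(a) to $u_s$, whose $V$-norm is $O(s^{(N-2)/2})$ as $s\to 0^+$) together with $\phi(s)\to-\infty$ as $s\to\infty$ (since $s^N\mathcal{F}(u)$ dominates $s^{N-2}\mathcal{A}(u)+s^N\mathcal{B}(u,s)$ by the crude bound above). These same asymptotics and the uniform bounds on $\mathcal{A}$ and $\mathcal{F}$ supply constants $0<\sigma_0<S_0$, independent of $(y,\lambda,R\geq R_0)$, such that every zero of $\phi$ lies in $(\sigma_0,S_0)$. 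For uniqueness, write $\phi(s)=s^{N-2}h(s)$ with $h(s):=\mathcal{A}(u)-s^2(\mathcal{F}(u)-\mathcal{B}(u,s))$; then
\[
h'(s)=-2s(\mathcal{F}(u)-\mathcal{B}(u,s))+s^2\partial_s\mathcal{B}(u,s)=-2s\mathcal{F}(u)+o_R(1)
\]
uniformly on $(\sigma_0,S_0)$, so $h$ is strictly decreasing there once $R\geq R_0$, which forces at most one zero. Since $\phi$ is jointly $C^1$ in $(s,y,\lambda,R)$ and $\phi'(S^R_{y,\lambda})\neq 0$, the implicit function theorem yields the stated continuity of $S^R_{y,\lambda}$.

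The principal obstacle is producing estimates on $\mathcal{B}$ and $\partial_s\mathcal{B}$ that are \emph{simultaneously} uniform in $y\in\partial B_2(y_0)$, $\lambda\in[0,1]$, and $s$ in a compact subinterval of $(0,\infty)$: the scalings by $\lambda$ and $1-\lambda$ degenerate as $\lambda\to 0^+$ or $\lambda\to 1^-$. This is resolved by the scale-compatible form of the Flucher--V\'etois pointwise decay \eqref{3.1}, which via $\lambda\leq 1$ lets one drop the $\lambda$-scaling at the cost of a harmless constant and thus allows Lemma \ref{ClappM}(a) to be invoked with constants independent of $\lambda$.
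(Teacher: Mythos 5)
Your $\phi(s)=J_V(u_s)$ equals $s\,\xi_V'(s)$, where $\xi_V(s):=I_V(u_s)$ is the fibering map the paper's proof works with, so you and the paper are locating zeros of the same one-variable function; and your factored form $\phi=s^{N-2}h$ with $h$ strictly decreasing on $(\sigma_0,S_0)$ is exactly the paper's device (there phrased as $\psi_V(s)=s^2\bigl(\mathcal{F}(u)-\mathcal{B}(u,s)\bigr)$ being strictly increasing). The overall architecture — positivity of $\phi$ for small $s$ via Proposition \ref{lema3.1}(a), negativity and the crude H\"older bound $|\mathcal{B}|\lesssim s^{-2}$ for large $s$, localization of the zero in a compact $(\sigma_0,S_0)$ with uniform constants via Lemma \ref{lema.aux}, uniqueness from monotonicity, and continuity of $S^R_{y,\lambda}$ — is a faithful restatement of the paper's proof.

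The one step that is not correct as written is the treatment of the Hessian contribution to $\partial_s\mathcal{B}$, namely $\int_{\mathbb{R}^N}(sx)H(sx)(sx)\,u^2\,dx$. You claim that, like the $V$- and $\nabla V\cdot x$-pieces, this integrand is controlled by $(1+|sy|)^{-k}u^2(y)$ and then falls under Lemma \ref{ClappM}(a). But $(V_5)$ gives no pointwise polynomial decay of $xH(x)x$: it gives only $L^{N/2}$-membership, the uniform bound \eqref{1.4}, and vanishing at infinity. One therefore needs a separate splitting argument (far region where $|xH(x)x|$ is small uniformly in $s>\sigma_0$, plus a fixed ball where the translated, dilated $w^2$ is $O(R^{-(N-2)})$), as carried out in the paper's inequalities \eqref{5.8}--\eqref{5.12}. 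Note also that the paper's own far-field bound divides by $\Vert w\Vert_2^2$, which is infinite for $N=3,4$ since $w\sim|x|^{2-N}$ by \eqref{3.1}; that piece should instead be estimated by H\"older, pairing $\Vert xH(x)x\Vert_{N/2}$ with $\Vert w\Vert_{2^*}$ on the complement of a large ball (which tends to zero). With that repair the estimate on $\partial_s\mathcal{B}$ closes, and the rest of your plan is sound and coincides with the paper's.
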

	
	\begin{proof}
Denote, as before, $U:=U_{y,\lambda}^R$ and let $\xi_V:(0,+\infty) \to \mathbb{R}$ be defined by
		\begin{eqnarray*}
		\xi_V(s) := I_V\!\left(U(\cdot/s)\right) = \frac{s^{N-2}}{2}\!\!\int_{\mathbb{R}^N}|\nabla U|^2dx + \frac{s^{N}}{2}\!\!\int_{\mathbb{R}^N}V(sx)U^2dx - s^N\!\!\int_{\mathbb{R}^N}F(U)dx.
		\end{eqnarray*}
		Then, $U(\cdot/s) \in \mathcal{P}_V$ if and only if $\xi_V'(s)=0$, where 
		\begin{eqnarray*}
			\xi_V'(s) = s^{N-3}\left[\frac{N-2}{2}\!\!\int_{\mathbb{R}^N}|\nabla U|^2dx - Ns^2\!\!\int_{\mathbb{R}^N}F(U)dx
			 + \frac{N}{2}s^2\!\!\int_{\mathbb{R}^N}\left(\frac{\nabla V(sx) \cdot (sx)}{N}+V(sx)\!\right)\!U^2dx\right].
		\end{eqnarray*}
		Since $s>0$, we have $\xi_V'(s)=0$ if and only if $$\frac{N-2}{2}\!\!\int_{\mathbb{R}^N}|\nabla U|^2dx = Ns^2\left[\int_{\mathbb{R}^N}F(U)dx - \frac{1}{2}\int_{\mathbb{R}^N}\left(\frac{\nabla V(sx) \cdot (sx)}{N}+V(sx)\!\right)\!U^2dx\right].$$ Set as before $C_\lambda^j:=\lambda^{j}+(1-\lambda)^{j}$ with $j \in \mathbb{N}$ and note that $2^{-j} \leq C_\lambda^j \leq 2$, for every $j \in \mathbb{N}$ and $\lambda \in [0,1]$. Moreover, observe that
		\begin{eqnarray*}
			\int_{\mathbb{R}^N}U^2dx &=& \int_{\mathbb{R}^N}\left(w_0+w_y\right)^2dx 
			=C_\lambda^{N}\!\!\int_{\mathbb{R}^N}w^2 dx + o_R(1)
		\end{eqnarray*}
	which gives that $\|U\|_{2}$ is bounded uniformly for $y\in \partial B_2(y_0)$, $\lambda\in (0,1)$ and $R\geq 1$.\\
		Since $\int_{\mathbb{R}^N}|\nabla w|^2dx > 0$, using {\bf ($V_2$)} and Lemma \ref{lema.aux}, there exists $R_1>1$, sufficiently large, and  $\sigma_0 \in (0,1)$ sufficiently small such that 
		\begin{eqnarray*}
			\xi_V'(s)=\! s^{N-3}\!\left\{\frac{N-2}{2}\!\!\int_{\mathbb{R}^N}\!|\nabla U|^2dx - Ns^2\left[\int_{\mathbb{R}^N}\!F(U)dx - \frac{1}{2}\!\!\int_{\mathbb{R}^N}\!\left(\frac{\nabla V(sx) \cdot (sx)}{N}+V(sx)\!\right)\!U^2dx\right]\right\}\! >\! 0,
		\end{eqnarray*}
		for every $s \in (0,\sigma_0]$, $y \in \partial B_2(y_0)$, $\lambda \in [0,1]$ and $R \geq R_1$.
		
		Now let us define a function $\psi_V:(\sigma_0,+\infty) \to \mathbb{R}$ by
		\begin{eqnarray*}
			\psi_V(s) = s^2\!\left[\int_{\mathbb{R}^N}F(U)dx - \frac{1}{2}\int_{\mathbb{R}^N}\left(\frac{\nabla V(sx) \cdot (sx)}{N}+V(sx)\!\right)\!U^2dx\right].
		\end{eqnarray*}
		Note that
		\begin{eqnarray*}
			\psi_V'(s) 
					&=& 2s\left[\int_{\mathbb{R}^N}F(U)dx - \frac{1}{2}\int_{\mathbb{R}^N}V(sx)U^2dx\right] \\
			&& - \, \frac{s}{2}\left[(N+3)\!\int_{\mathbb{R}^N}\frac{\nabla V(sx) \cdot (sx)}{N}U^2dx + \int_{\mathbb{R}^N}\frac{(sx) H(sx) (sx)}{N}U^2dx\right].
		\end{eqnarray*}
		Observe that
		$$(1+|sx|)^{-k} \leq \left\{\begin{array}{cc}
		\sigma_0^{-k}(1+|x|)^{-k}, & \ \ \text{if} \ \ \sigma_0 < s \leq 1 \\ 
		(1+|x|)^{-k}, & \text{if} \ \ 1 \leq s.
		\end{array}\right.$$
		Therefore, using the hypothesis {\bf ($V_2$)}, we obtain constants $C>0$ such that
		\begin{eqnarray*}
			\int_{\mathbb{R}^N}|V(sx)|U^2dx &\leq& C\!\int_{\mathbb{R}^N}(1+|x|)^{-k}\left(w_0+w_y\right)^2dx,
		\end{eqnarray*}
		\begin{eqnarray*}
			\int_{\mathbb{R}^N}|\nabla V(sx) \cdot (sx)|U^2dx &\leq& C\!\int_{\mathbb{R}^N}(1+|x|)^{-k}\left(w_0+w_y\right)^2dx,
		\end{eqnarray*}
		for every $s > \sigma_0$. Thus, using the inequalities in (\ref{3.1}) and applying  Lemma \ref{ClappM}(b), it follows
		\begin{eqnarray}
			\int_{\mathbb{R}^N}|V(sx)|U^2dx = o_R(1), \qquad \int_{\mathbb{R}^N}|\nabla V(sx) \cdot (sx)|U^2dx = o_R(1),
		\end{eqnarray}
		where $o_R(1) \to 0$ as $R \to +\infty$. Furthermore note that
		\begin{eqnarray*}\label{igualU}
			\int_{\mathbb{R}^N}\left|(sx)H(sx)(sx)\right|U^2dx
			&\leq& 2\!\int_{\mathbb{R}^N}\left|(sx)H(sx)(sx)\right|\left[(w_0)^2 + (w_y)^2\right]dx.
		\end{eqnarray*}
		Let us prove that $\int_{\mathbb{R}^N}\left|(sx)H(sx)(sx)\right|(w_0)^2dx = o_R(1)$. Indeed, if $\lambda = 0$ the result follows from $(\ref{5.2})$. Suppose that $\lambda \in (0,1]$ and let $\varepsilon > 0$ be given arbitrarily. Then, since $\Vert w \Vert_2 > 0$, using the hypothesis {\bf ($V_5$)}, we can take $\tilde\rho > 0$ sufficiently large  such that for all $s>\sigma_0$  and $|x|\geq \tilde\rho/\sigma_0$,
		$$\left|(sx)H(sx)(sx)\right| < \frac{\varepsilon}{4\Vert w \Vert_2^2}.$$ 
		So, for all $s > \sigma_0$ and $\lambda \in (0,1]$, we have
		\begin{equation}\label{5.8}
		\int_{|x|\geq \tilde\rho/\sigma_0}\left|(sx)H(sx)(sx)\right|(w_0)^2dx \leq \frac{\varepsilon}{4\Vert w \Vert_2^2}\int_{\mathbb{R}^N}w_0^2\,dx \leq \frac{\varepsilon}{4\Vert w \Vert_2^2}\lambda^N\!\int_{\mathbb{R}^N}w^2dx \leq \frac{\varepsilon}{4}.
		\end{equation}
		On the other hand, using (\ref{1.4}) and (\ref{3.1}), we obtain 
	\begin{align}\label{5.9}
		& \int_{|x|\leq\tilde\rho/\sigma_0}\left|(sx)H(sx)(sx)\right|(w_0)^2dx \leq C\!\int_{|x|\leq\tilde\rho/\sigma_0}(w_0)^2dx \leq C\!\int_{|x|\leq \tilde\rho/\sigma_0}\left(1+\left|\frac{x-Ry_0}{\lambda}\right|\right)^{\!-(N-2)}dx\nonumber \\
		& \qquad\qquad\qquad \leq C\!\int_{|x|\leq\tilde\rho/\sigma_0}(|Ry_0|-|x|)^{-(N-2)}dx \leq C\left(R-\frac{\tilde\rho}{\sigma_0}\right)^{\!-(N-2)}\nonumber \\
		& \qquad\qquad\qquad \leq C\left(\frac{R}{2}\right)^{\!-(N-2)} \leq CR^{-(N-2)},
		\end{align}
		for every $s > \sigma_0$, $\lambda \in (0,1]$. Therefore, inequalities (\ref{5.8}) and (\ref{5.9}) give that
\begin{equation}\label{5.10}
			\int_{\mathbb{R}^N}\left|(sx)H(sx)(sx)\right|(w_0)^2dx \leq \frac{\varepsilon}{4} + CR^{-(N-2)}.
		\end{equation}
		 Since $1 \leq |y| \leq 3$, by an analogous procedure, there exists $C>0$ such that
		\begin{equation}\label{5.11}
			\int_{\mathbb{R}^N}\left|(sx)H(sx)(sx)\right|(w_y)^2dx \leq \frac{\varepsilon}{4} + CR^{-(N-2)},
		\end{equation}
		for every $s > \sigma_0$, $y \in \partial B_2(y_0)$, $\lambda \in [0,1]$. From (\ref{5.10}) and (\ref{5.11}), there exists $C > 0$ such that
\begin{eqnarray}\label{5.12}
			\int_{\mathbb{R}^N}\left|(sx)H(sx)(sx)\right|U^2dx 
			&\leq& 2\!\int_{\mathbb{R}^N}\left|(sx)H(sx)(sx)\right|\left[(w_0)^2 + (w_y)^2\right]dx\nonumber \\
			&\leq& \varepsilon + CR^{-(N-2)}.
		\end{eqnarray}
		 Since $\varepsilon > 0$ was taken arbitrarily, it follows from (\ref{5.12}) that
		\begin{eqnarray}\label{sxh}
			\int_{\mathbb{R}^N}\left|(sx)H(sx)(sx)\right|U^2dx = o_R(1).
		\end{eqnarray}
		Thus, knowing that $\int_{\mathbb{R}^N}F(w)dx > 0$, using the hypotheses {\bf ($V_2$)}, {\bf ($V_5$)}, Lemma \ref{lema.aux} $(b)$, (\ref{igualU}) and $(\ref{sxh})$  we obtain
		\begin{eqnarray*}
			\psi_V'(s) &=& 2s\left[\int_{\mathbb{R}^N}F(U)dx - \frac{1}{2}\int_{\mathbb{R}^N}V(sx)U^2dx\right] \\
			&& - \, \frac{s}{2}\left[(N+3)\!\int_{\mathbb{R}^N}\frac{\nabla V(sx) \cdot (sx)}{N}U^2dx + \int_{\mathbb{R}^N}\frac{(sx) H(sx) (sx)}{N}U^2dx\right] > 0,
		\end{eqnarray*}
		for every $s > \sigma_0$, $y \in \partial B_2(y_0)$, $\lambda \in [0,1]$ and $R \geq R_1$ sufficiently large. This means that $\psi_V(s)$ is increasing for $s>\sigma_0$ and $R$ taken sufficiently large. This implies that the term in the brackets for $\xi'_V(s)$ is decreasing for $s>\sigma_0$, and goes to $-\infty$. Therefore, there is a unique $s=S_{y,\lambda}^{R}>\sigma_0$ such that $\xi_V'(s)=0$, i.e.  $U_{y,\lambda}^R\!\left(\cdot/s\right) \in \mathcal{P}_V$.
		Furthermore, again by Lemma \ref{lema.aux} $(b)$, \eqref{1.2} and \eqref{1.3}
		there exists $R_2 >1 $, sufficiently large, and $S_0 >1$ such that $\xi'_V(s) <0$,
		for all $s > S_0$, $R> R_2$, $y \in \partial B_2(y_0)$ and $\lambda \in [0,1]$.
		Taking $R_0 = \max \{R_1, R_2\}$ the result follows.
		Finally, from the uniform estimates for $U$, $\nabla U$ and $F(U)$ with respect to $y$, $\lambda$ and $R >R_0$, the continuity of $S_{y,\lambda}^{R}$ in these variables is clear, and the proof is complete.
		
	\end{proof}

	From here on, consider $S^R_{y, \lambda}$ as obtained in Lemma \ref{unic.s}.

	\begin{lemma}\label{lambda=1/2}
		Assume {\bf ($V_1$)--($V_3$)} and {\bf ($f_1$)--($f_2$)} hold true. Then, for $\lambda=1/2$, we have $S_{y,1/2}^{R} \to 2$ as $R \to +\infty$ uniformly for $y \in \partial B_2(0)$.
	\end{lemma}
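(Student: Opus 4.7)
The plan is to exploit the characterization of $S_{y,1/2}^R$ from Lemma \ref{unic.s} as the unique zero of $\xi_V'$ in $(\sigma_0, S_0)$, and to pass to the limit in the equation $\xi_V'(s) = 0$ using the asymptotic expansions of $\int |\nabla U|^2$ and $\int F(U)$ from Lemma \ref{lema.aux}. The limit equation, together with the Pohozaev identity for the ground state $w$ of the limit problem, will force the limit to be exactly $2$.

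More concretely, I first note that, because $S_{y,1/2}^R$ takes values in the compact interval $[\sigma_0, S_0]$ uniformly in $y$ and $R \geq R_0$, it suffices to show that every convergent subsequence $s_{R_n} := S_{y_n, 1/2}^{R_n}$, with $R_n \to \infty$ and $y_n \in \partial B_2(y_0)$, satisfies $s_{R_n} \to 2$. From the characterization $\xi_V'(s_{R_n}) = 0$, I would rewrite the defining equation as
\begin{equation*}
\frac{N-2}{2N}\int_{\mathbb{R}^N}|\nabla U_{y_n,1/2}^{R_n}|^2\,dx
= s_{R_n}^{2}\int_{\mathbb{R}^N} F\bigl(U_{y_n,1/2}^{R_n}\bigr)\,dx
- \frac{s_{R_n}^{2}}{2}\int_{\mathbb{R}^N}\!\Bigl(\tfrac{\nabla V(s_{R_n}x)\cdot(s_{R_n}x)}{N}+V(s_{R_n}x)\Bigr) \bigl(U_{y_n,1/2}^{R_n}\bigr)^{2}\,dx.
\end{equation*}
Substituting $\lambda = 1/2$ in Lemma \ref{lema.aux} yields $C_{1/2}^{N-2} = 2^{-(N-3)}$ and $C_{1/2}^{N} = 2^{-(N-1)}$, so the left side tends to $\tfrac{N-2}{2N} \cdot 2^{-(N-3)} \int |\nabla w|^2\,dx$, while the main term on the right tends to $s_\infty^{2} \cdot 2^{-(N-1)}\int F(w)\,dx$, where $s_\infty \in [\sigma_0, S_0]$ is the limit of $s_{R_n}$.

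The key technical step is to verify that the potential integrals on the right are $o_R(1)$ uniformly for $s \in [\sigma_0, S_0]$ and $y \in \partial B_2(y_0)$. For this I would reuse the argument already carried out in the proof of Lemma \ref{unic.s}: hypothesis {\bf ($V_2$)} gives $|V(sx)|,\, |\nabla V(sx)\cdot(sx)| \le C(1+|x|)^{-k}$ for $s \ge \sigma_0$, and the decay estimate \eqref{3.1} together with Lemma \ref{ClappM}(a)--(b) bounds the products $(1+|x|)^{-k}(w_0)^2$, $(1+|x|)^{-k}(w_y)^2$ and $(1+|x|)^{-k}w_0 w_y$ by $C R^{-\mu}$ with $\mu > 0$, uniformly in $y$ and $s$. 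This is the only delicate point, but it is essentially routine once one invokes the estimates already developed.

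Finally, the Pohozaev identity for $w$ (the ground state of \eqref{P_0}) reads $\tfrac{N-2}{2}\int |\nabla w|^2 = N\int F(w)$, i.e.\ $\int F(w) = \tfrac{N-2}{2N}\int |\nabla w|^2$. Inserting this into the limit equation and cancelling the nonzero factor $\tfrac{N-2}{2N}\int|\nabla w|^2$ yields
\[
2^{-(N-3)} = s_\infty^{2}\cdot 2^{-(N-1)},
\]
hence $s_\infty^{2} = 4$ and therefore $s_\infty = 2$. Since this is the limit of an arbitrary convergent subsequence extracted from the bounded family $\{S_{y,1/2}^R\}$, and all error terms are uniform in $y \in \partial B_2(y_0)$, the convergence $S_{y,1/2}^R \to 2$ holds uniformly as $R \to \infty$, as claimed.
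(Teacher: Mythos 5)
Your proof is correct, but it takes a slightly different route from the paper's. The paper evaluates $J_V$ at the specific scaling $s=2$: denoting $\overline{w}_0 := w(\cdot - 2Ry_0)$ and $\overline{w}_y := w(\cdot - 2Ry)$, it expands $J_0(\overline{w}_0 + \overline{w}_y)$, uses $J_0(w)=0$ and the interaction estimates (Lemma~\ref{gradw_0.gradw_y}, Corollary~\ref{F-ACP-estimate}, Lemma~\ref{varep_m}) to show $J_0(\overline{w}_0 + \overline{w}_y) = O(R^{-(N-2)})$, then adds the potential term via $(V_2)$ to conclude $J_V\bigl(U_{y,1/2}^R(\cdot/2)\bigr) = o_R(1)$; the conclusion $S^R_{y,1/2}\to 2$ then rests (implicitly) on the strict monotonicity of $s\mapsto J_V(U(\cdot/s))/s^{N-2}$ established within the proof of Lemma~\ref{unic.s}. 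Your argument instead passes to the limit directly in the defining equation $\xi_V'(S_{y,1/2}^R)=0$, invoking Lemma~\ref{lema.aux} for the two main terms and the uniform vanishing of the potential terms (as in Lemma~\ref{unic.s} or Lemma~\ref{lema11}), and then identifies the limit via the Pohozaev identity $\tfrac{N-2}{2}\int|\nabla w|^2 = N\int F(w)$ for the ground state $w$, using subsequential compactness of $S^R_{y,1/2}\in[\sigma_0,S_0]$ to get uniformity. Both routes rely on the same decay and interaction machinery; yours has the advantage of being more explicitly complete in the final deduction (you don't need to re-invoke the monotonicity of $\xi_V'$), while the paper's is arguably more geometric in spirit since it checks the Pohozaev constraint at the expected scaling $s=2$. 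The only place to be a bit careful in your write-up is the precise form of $C_\lambda^j$: note $C_{1/2}^{N-2}=2\cdot 2^{-(N-2)}=2^{-(N-3)}$ and $C_{1/2}^{N}=2\cdot 2^{-N}=2^{-(N-1)}$, which is exactly what you used, so the cancellation $s_\infty^2 = 4$ is correct.
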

	
	\begin{proof}
		By Lemma \ref{unic.s}, there exist $R_0 \geq 1$ and $S_0 > 2$ such that $S_{y,\lambda}^{R} \in (0,S_0)$ for any $R \geq R_0$, $y \in \partial B_2(y_0)$ and $\lambda \in (0,1)$.  Denoting $\overline{w}_0:=w_{0,1/2}^{R}\left(\frac{\cdot}{2}\right)=w(\cdot - 2Ry_0)$ and $\overline{w}_y:=w_{y,1/2}^{R}\left(\frac{\cdot}{2}\right)=w(\cdot - 2Ry)$, we have 
		\begin{eqnarray*}
			J_0\left(\overline{w}_0+\overline{w}_y\right) 
			&=& \left[\frac{N-2}{2}\!\int_{\mathbb{R}^N}|\nabla w|^2 - N\!\int_{\mathbb{R}^N}F(w)\right] + \left[\frac{N-2}{2}\!\int_{\mathbb{R}^N}|\nabla w|^2 - N\!\int_{\mathbb{R}^N}F(w)\right] \\
			&& + \, (N-2)\!\int_{\mathbb{R}^N}\nabla \overline{w}_{0} \cdot \nabla \overline{w}_{y} - N\!\int_{\mathbb{R}^N}\left[F\left(\overline{w}_{0}+\overline{w}_{y}\right) - F\left(\overline{w}_{0}\right) - F\left(\overline{w}_{y}\right)\right].
			\end{eqnarray*}
			Since $J_0(w)=0$, it follows that	
		\begin{equation}\label{Jsoma}	
			J_0\left(\overline{w}_0+\overline{w}_y\right)= (N-2)\!\int_{\mathbb{R}^N}\nabla \overline{w}_{0} \cdot \nabla \overline{w}_{y} - N\!\int_{\mathbb{R}^N}\left[F\left(\overline{w}_{0}+\overline{w}_{y}\right) - F\left(\overline{w}_{0}\right) - F\left(\overline{w}_{y}\right)\right].
			\end{equation}
		Observe that Lemma \ref{gradw_0.gradw_y} with $\lambda=1/2$ yields		
		\begin{equation}\label{nablanabla}
			\int_{\mathbb{R}^N}\nabla \overline{w}_{0} \cdot \nabla \overline{w}_{y}\,dx  \leq 4 CR^{-(N-2)}.
		\end{equation}
		On the other hand, using \eqref{ACP-estimate} and \eqref{desiepsilon}, we get
		\begin{align*}
		& \left\vert F\!\left(\overline{w}_{0}+\overline{w}_{y}\right) - F\!\left(\overline{w}_{0}\right) - F\!\left(\overline{w}_{y}\right)\right\vert \\ 
		& \qquad \leq \left\vert F\!\left(\overline{w}_{0}+\overline{w}_{y}\right) - F\!\left(\overline{w}_{0}\right) - F\!\left(\overline{w}_{y}\right) - f\!\left(\overline{w}_{0}\right)\overline{w}_{y} - f\!\left(\overline{w}_{y}\right)\overline{w}_{0}\right\vert \\
		& \qquad \ \ + \, \left\vert f\!\left(\overline{w}_{0}\right)\overline{w}_{y} + f\!\left(\overline{w}_{y}\right)\overline{w}_{0}\right\vert \\
		& \qquad \leq CR^{-(N-2)}.
		\end{align*}%
		Therefore, there exists $C> 0$ such that
		\begin{eqnarray}\label{JFR}
			\left|J_0\left(\overline{w}_0+\overline{w}_y\right)\right| \leq CR^{-(N-2)}.
		\end{eqnarray}
		Thus, $J_0\left(\overline{w}_0+\overline{w}_y\right) \to 0$ as $R \to \infty$, uniformly for $y \in \partial B_2(0)$. Then, in the case $\lambda=1/2$, using hypothesis {\bf ($V_2$)}, we obtain
		\begin{eqnarray}\label{jbara}
			J_V\!\left(\!U_{y,1/2}^R \left(\frac{\cdot}{2} \right)\!\right){}\nonumber &=& J_V\left(\overline{w}_0+\overline{w}_y\right) = J_0\left(\overline{w}_{0}+\overline{w}_{y}\right) + \frac{N}{2}\!\int_{\mathbb{R}^N}\left(\frac{\nabla V(x)\cdot x}{N}+V(x)\!\right)\!\left(\overline{w}_{0}+\overline{w}_{y}\right)^{2}\!dx \\
		&\leq& J_0\left(\overline{w}_{0}+\overline{w}_{y}\right) + C\int_{\mathbb{R}^N}(1+|x|)^{-k}\left(\overline{w}_{0}+\overline{w}_{y}\right)^{2}\!dx,
		\end{eqnarray}
		and again using (\ref{3.1}) and Lemma \ref{ClappM}(b) the last integral above is bounded by $ CR^{-(N-2)}$.
	
		From $(\ref{JFR})$ and $(\ref{jbara})$, we get $$\left| J_V\!\left(U_{y,1/2}^R\left(\frac{\cdot}{2}\right)\!\right)\right| \leq CR^{-(N-2)}.$$
		 Therefore, $J_V\!\left(U_{y,1/2}^R\left(\frac{\cdot}{2}\right)\!\right)=o_R(1)$, where $o_R(1) \to 0$ as $R \to \infty$, uniformly for $y \in \partial B_2(0)$. This proves the lemma.
		
	\end{proof}
	
\begin{lemma}\label{lema11} Assume that $(V_2)$ holds true. Let  $S_0>2$ and $1\geq \sigma_0>0$, then, there exists $\tau> N-2$ such that the following hold:\\
		$(a)$ $\displaystyle \int_{\mathbb{R}^N}|V(x)|\left(U^R_{y, \lambda}\left(\frac{x}{s}  \right) \right)^2 \,dx\leq CR^{-\tau}$;\\
		$(b)$ $\displaystyle \int_{\mathbb{R}^N}|\nabla V(x)|\left(U^R_{y, \lambda}\left(\frac{x}{s}  \right) \right)^2 \,dx\leq CR^{-\tau}$,\\
		for every $s\in (\sigma_0, S_0)$, $y\in \partial B_2(y_0)$, $\lambda \in [0,1]$ and $R\geq 1$.		
		\end{lemma}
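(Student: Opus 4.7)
The plan is to exploit the two-bump decomposition $U^R_{y,\lambda} = w^R_{0,\lambda} + w^R_{y,1-\lambda}$ combined with the polynomial decay of $V$ (and of $\nabla V \cdot x$) from $(V_2)$ and of $w$ from \eqref{3.1}, and then to apply Lemma \ref{ClappM}(a) to turn products of two decaying factors into the desired rate $R^{-\tau}$.

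First I would change variable $z = x/s$; the Jacobian $s^N$ is harmless because $s \in (\sigma_0, S_0)$ is uniformly bounded above and away from zero. The integrands become $|V(sz)|\,(U^R_{y,\lambda}(z))^2$ and (for item (b)) an integrand controlled by $|\nabla V(sz)\cdot(sz)|/|sz|$ times $(U^R_{y,\lambda}(z))^2$, this being the form of $\nabla V$-control actually furnished by $(V_2)$ and used elsewhere in the paper (see the proof of Lemma \ref{unic.s}). By $(V_2)$ together with $s \geq \sigma_0 > 0$, both $|V(sz)|$ and the relevant derivative quantity are dominated by $C(1+|z|)^{-k}$, uniformly in $s$.

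Next I would insert $(U^R_{y,\lambda})^2 \leq 2\bigl[(w^R_{0,\lambda})^2 + (w^R_{y,1-\lambda})^2\bigr]$; the boundary cases $\lambda \in \{0,1\}$ are trivial by \eqref{5.2}, so fix $\lambda \in (0,1)$. The upper part of \eqref{3.1} together with \eqref{deslambda} yields $w^R_{0,\lambda}(z) \leq A_5(1+|z - Ry_0|)^{-(N-2)}$ and likewise $w^R_{y,1-\lambda}(z) \leq A_5(1+|z - Ry|)^{-(N-2)}$. The problem thus reduces to bounding
\begin{equation*}
\int_{\mathbb{R}^N}\frac{dz}{(1+|z|)^{k}(1+|z-Ry_0|)^{2(N-2)}} \quad\text{and}\quad \int_{\mathbb{R}^N}\frac{dz}{(1+|z|)^{k}(1+|z-Ry|)^{2(N-2)}},
\end{equation*}
to both of which Lemma \ref{ClappM}(a) applies with $\alpha = k$ and $\beta = 2(N-2)$, the base points $0$ and $y_0$ (respectively $0$ and $y$) being distinct since $|y_0| = 1$ and $|y| \in [1,3]$. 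This gives a bound of order $R^{-\mu}$ with $\mu := \min\{k,\,2(N-2),\,k+N-4\}$.

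Finally I would verify $\mu > N-2$: $(V_2)$ supplies $k > \max\{2, N-2\}$, so $k > N-2$, $k + N - 4 > N - 2$ (as $k > 2$), and $2(N-2) > N-2$ (as $N \geq 3$). Setting $\tau := \mu$ thus yields the claim, the constant $C$ absorbing the $s^N$ factor and the $\sigma_0$-dependent constant from the $V$-bound. The main obstacle is not analytic but bookkeeping: one must verify that all intermediate constants depend only on $N$, $k$, $A_0$, $A_1$, $A_5$, $|y_0|$, $\sigma_0$, $S_0$, and the bounded range $|y| \leq 3$, so that the final bound is genuinely uniform in $s$, $y$, $\lambda$, and $R \geq 1$, and that each of the three conditions forcing $\mu > N-2$ uses $(V_2)$ strictly (the two inequalities $k > N-2$ and $k > 2$ being independent ingredients there).
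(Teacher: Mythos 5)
Your route is essentially the paper's: the same two-bump split $U^2\le 2(w_0^2+w_y^2)$, the same change of variables $x=sz$, the same use of $(V_2)$ and the decay \eqref{3.1} via \eqref{deslambda} to arrive at integrals of the form $\int(1+|z|)^{-k}(1+|z-Ry_0|)^{-2(N-2)}\,dz$, and the same conclusion $\tau=\min\{k,\,2(N-2),\,k+2(N-2)-N\}>N-2$. The only cosmetic difference is in which part of Lemma \ref{ClappM} is invoked: the paper uses part (b) with the two translated points coinciding, while you use part (a) with one bump placed at the origin; both are legitimate and give the identical rate.

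One genuine issue in your treatment of item (b): you assert the integrand is \emph{controlled by} $|\nabla V(sz)\cdot(sz)|/|sz|$ times $(U^R_{y,\lambda}(z))^2$, but Cauchy--Schwarz runs the wrong way for this --- it gives $|\nabla V(x)\cdot x|/|x|\le|\nabla V(x)|$, so the radial-derivative quantity is a \emph{lower} bound for $|\nabla V(x)|$, not an upper bound. Hypothesis $(V_2)$ genuinely does not bound $|\nabla V(x)|$; it only bounds the radial component $|\nabla V(x)\cdot x|$. The paper's own one-line proof of (b) simply substitutes $|\nabla V(x)\cdot x|\le A_1(1+|x|)^{-k}$, which strongly suggests the statement of (b) carries a misprint and is intended to read $|\nabla V(x)\cdot x|$ rather than $|\nabla V(x)|$; under that reading the argument you gave for part (a) carries over verbatim, but as you have written it the reduction step is not valid.
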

	\begin{proof}
		By $(V_2)$, the decay estimates $(\ref{3.1})$ and inequalities $(\ref{deslambda})$, we obtain
		\begin{eqnarray}\label{desiu}
		\int_{\mathbb{R}^N}|V(x)&\hspace{-.7cm}|&\hspace{-.7cm}\left (U^R_{y, \lambda}\left(\frac{x}{s} \right) \right)^2{}\nonumber\leq 2		\int_{\mathbb{R}^N}|V(x)|\left (w^R_{0, \lambda}\left(\frac{x}{s} \right) \right)^2+ 2	\int_{\mathbb{R}^N}|V(x)|\left (w^R_{y,1- \lambda}\left(\frac{x}{s} \right) \right)^2	\\
		{}\nonumber&\leq& 2s^N	\int_{\mathbb{R}^N}|V(sx)|\left [w^2\left(\frac{x-Ry_0}{\lambda}\right)+ w^2\left(\frac{x-Ry}{1-\lambda}   \right) \right]\\
	&\leq& Cs^N_0\left\{\int_{\mathbb{R}^N}\frac{1}{(1+|sx|)^k(1+|x-Ry_0|)^{2(N-2)}}+ \int_{\mathbb{R}^N}\frac{1}{(1+|sx|)^k(1+|x-Ry|)^{2(N-2)}}\right\}.
		\end{eqnarray}
		Since $1\geq \sigma_0>0$ and $|sx|> \sigma_0|x|$, then by Lemma $\ref{ClappM}$ $(b)$
		$$\int_{\mathbb{R}^N}\frac{1}{(1+|\sigma_0|x||)^k(1+|x-Ry_0|)^{2(N-2)}}\leq \sigma_0^{-k}\int_{\mathbb{R}^N}\frac{1}{(1+|x|)^k(1+|x-Ry_0|)^{2(N-2)}} \leq CR^{-\tau},$$
		where $\tau=\min\,\{k,2(N-2), k+2(N-2)-N\}> N-2$, for every $s\in(\sigma_0,S_0)$, $y\in \partial B_2(y_0)$, $\lambda\in (0,1)$ and $R \geq 1$; analogously for the second integral in $(\ref{desiu})$. Thus, the first inequality of the lemma is proved.\\
		The second assertion of this lemma is obtained is the same way, using $(V_2)$ with $|\nabla V(x).x|\leq A_1(1+|x|)^{-k}$.
		\end{proof}

	\begin{lemma}\label{lema.extrem}
		Assume that {\bf ($V_1$)--($V_3$)} and {\bf ($f_1$)--($f_2$)} hold true. Then, for any $\delta>0$, there exists $R_3 > 0$ such that $$I_V\!\left(\!U_{y,\lambda}^R\!\left(\frac{\cdot}{s}\right)\!\right) < p_0+\delta,$$ for $\lambda = 0$ or $\lambda = 1$ and every $y \in \partial B_2(y_0)$ and $R \geq R_3$, where $s:=S_{y,\lambda}^{R}>0$ is such that $U_{y,\lambda}^R\!\left(\frac{\cdot}{s}\right) \in \mathcal{P}_V$.
	\end{lemma}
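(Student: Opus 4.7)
My plan is to exploit the fact that when $\lambda\in\{0,1\}$ the sum $U^R_{y,\lambda}$ degenerates to a \emph{single} translated ground-state profile, so the rescaled energy reduces to a one-bump autonomous estimate plus a small perturbation coming from the potential. Indeed, by \eqref{5.2}--\eqref{5.3} one has $U^R_{y,0}=w(\cdot-Ry)$ and $U^R_{y,1}=w(\cdot-Ry_0)$; writing the translation centre as $z$ (so $|z|\geq R$), I would split
\[
I_V\bigl(U^R_{y,\lambda}(\cdot/s)\bigr) \;=\; I_0\bigl(w((\cdot-z)/s)\bigr) \,+\, \tfrac{1}{2}\!\int_{\mathbb{R}^N}\! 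V(x)\bigl(U^R_{y,\lambda}(x/s)\bigr)^2\,dx.
\]

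For the autonomous piece, the change of variables $y=(x-z)/s$ together with the Pohozaev identity $J_0(w)=0$ (which forces $\int F(w)=\tfrac{N-2}{2N}\|\nabla w\|_2^2$) rewrites
\[
I_0\bigl(w((\cdot-z)/s)\bigr) \;=\; \phi(s)\,\|\nabla w\|_2^2, \qquad \phi(s):=\tfrac{s^{N-2}}{2}-\tfrac{(N-2)s^N}{2N}.
\]
Since $\phi'(s)=\tfrac{(N-2)s^{N-3}}{2}(1-s^2)$, the function $\phi$ is maximised at $s=1$ with $\phi(1)=1/N$; recalling that $p_0=I_0(w)=\|\nabla w\|_2^2/N$ (itself a direct consequence of $J_0(w)=0$), this already yields $I_0(w((\cdot-z)/s))\leq p_0$ for every $s>0$, independently of the precise value of $s$.

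For the non-autonomous part, Lemma~\ref{unic.s} confines $s=S^R_{y,\lambda}$ to the fixed compact interval $(\sigma_0,S_0)$ for every $R\geq R_0$, so Lemma~\ref{lema11}(a) directly furnishes a constant $C>0$ and an exponent $\tau>N-2$ with
\[
\tfrac{1}{2}\!\int_{\mathbb{R}^N}\!|V(x)|\bigl(U^R_{y,\lambda}(x/s)\bigr)^2\,dx \;\leq\; \tfrac{C}{2}\,R^{-\tau},
\]
uniformly in $y\in\partial B_2(y_0)$. Combining the two bounds gives $I_V(U^R_{y,\lambda}(\cdot/s))\leq p_0+(C/2)R^{-\tau}$, so it suffices to choose $R_3\geq R_0$ so large that $(C/2)R_3^{-\tau}<\delta$. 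I do not foresee a real obstacle: the whole argument hinges on the elementary maximisation of $\phi$ on $(0,\infty)$, which ensures the single-bump rescaled autonomous energy never exceeds $p_0$, while the potential remainder is handled verbatim by Lemma~\ref{lema11}(a).
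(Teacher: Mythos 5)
Your proposal is correct and follows essentially the same route as the paper: decompose $I_V$ into the autonomous energy plus a potential perturbation, use the fact that $t\mapsto I_0(w(\cdot/t))$ is maximized at $t=1$ (which you derive explicitly from $J_0(w)=0$ via the function $\phi$), bound the potential term by $CR^{-\tau}$ via Lemma~\ref{lema11}(a) using that $s\in(\sigma_0,S_0)$, and choose $R_3$ large. The only cosmetic difference is that the paper states the maximization property of $I_0(w(\cdot/t))$ directly while you verify it by computing $\phi'$; otherwise the arguments coincide.
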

	
	\begin{proof}
		Let $\delta > 0$ be given arbitrarily. By Lemma \ref{unic.s}, $S_{y,\lambda}^{R}$ is bounded uniformly in $R$, $y$ and $\lambda$. For $\lambda = 0$, we have $U_{y,\lambda}^R = U_{y,0}^R = w_{y,1}^{R} = w(\cdot - Ry).$ Observe that $w \in \mathcal{P}_0$ and the map $t \mapsto I_0\!\left(\!w\!\left(\frac{\cdot}{t}\right)\!\right)$ is strictly increasing in $(0,1]$ and strictly decreasing in $[1,\infty)$. In particular,
	$		p_0=I_0(w)=\max_{t>0}I_0\!\left(\!w\!\left(\frac{\cdot}{t}\right)\!\right).$ So by changing the variables $x=sz$ and using {\bf ($V_2$)} and (\ref{3.1}), it follows
		\begin{eqnarray*}
			 I_V\!\left(\!U_{y,0}^R\!\left(\frac{\cdot}{s}\right)\!\right) &=& s^{N-2}\left[\frac{1}{2}\int_{\mathbb{R}^N}|\nabla w|^2dx-s^2\int_{\mathbb{R}^N}F(w) dx \right] +\frac{s^N}{2}\int_{\mathbb{R}^N}V(sz)\left( w^R_{y,1}\right)^2dz\\
			 &\leq& I_0(w) + \frac{s^N}{2}\!\int_{\mathbb{R}^N}|V(sz)|(w(z-Ry))^2dz\leq p_0+CR^{-\tau},		 
		\end{eqnarray*}
	by Lemma \ref{lema11},
	where $\tau:=\min\{k,2(N-2),k+2(N-2)-N\} > N-2$. So, given $\delta>0$, there exists $R_3>0$ such that for all $R> R_3$
		\begin{eqnarray*}
			I_V\!\left(\!U_{y,0}^R\!\left(\frac{\cdot}{s}\right)\!\right) &\leq& p_0 + CR^{-\tau} \leq p_0+ CR_1^{-1} < p_0 + \delta,
		\end{eqnarray*}
		for any $y \in \partial B_2(y_0)$. Analogously $$I_V\!\left(\!U_{y,1}^R\!\left(\frac{\cdot}{s}\right)\!\right) < p_0 + \delta,$$ for any $y \in \partial B_2(y_0)$ and $R \geq R_3$.
	\end{proof}

\begin{proposition}\label{prop.princ}
	Assume that {\bf ($V_1$)--($V_3$)} and {\bf ($f_1$)--($f_2$)} hold true. Then, there exist $R_4 \geq 1$ and, for each $R \geq R_4$, a number $\eta=\eta(R) > 0$ such that $$I_V\!\left(\!U_{y,\lambda}^R\!\left(\frac{\cdot}{s}\right)\!\right) \leq 2p_0-\eta,$$ if $s:=S_{y,\lambda}^{R}$, for all  $y \in \partial B_2(y_0)$ and $\lambda \in [0,1]$.
\end{proposition}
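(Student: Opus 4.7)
The plan is to expand $I_V(U_{y,\lambda}^R(\cdot/s))$ at $s=S_{y,\lambda}^R$ asymptotically in $R$ and to show that, away from the endpoints $\lambda\in\{0,1\}$, the sum of the two ``free'' soliton energies is bounded by $2p_0$ with a strictly negative interaction correction of order $R^{-(N-2)}$, while near the endpoints Lemma \ref{lema.extrem} together with continuity suffice. Since $\partial B_2(y_0)\times[0,1]$ is compact and the map $(y,\lambda)\mapsto I_V(U_{y,\lambda}^R(\cdot/S_{y,\lambda}^R))$ is continuous by Lemma \ref{unic.s}, a pointwise strict inequality with uniform margin on two complementary subsets will suffice.

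First I split $I_V(U(\cdot/s))=I_0(U(\cdot/s))+\tfrac{s^N}{2}\int V(sx)U^2\,dx$. By Lemma \ref{lema11} and $s\in(\sigma_0,S_0)$ from Lemma \ref{unic.s}, the potential summand is $O(R^{-\tau})=o(R^{-(N-2)})$ uniformly. Writing $w_0:=w_{0,\lambda}^R$, $w_y:=w_{y,1-\lambda}^R$, a direct expansion combined with Corollary \ref{F-ACP-estimate} yields
\[
I_0(U(\cdot/s))=I_0(w_0(\cdot/s))+I_0(w_y(\cdot/s))+s^{N-2}\gamma_\lambda^R-s^N(\varepsilon_1+\varepsilon_2)+o(\varepsilon_\lambda^R),
\]
where $\gamma_\lambda^R:=\int\nabla w_0\cdot\nabla w_y\,dx$, $\varepsilon_1:=\int f(w_0)w_y\,dx=\varepsilon_\lambda^R$, and $\varepsilon_2:=\int f(w_y)w_0\,dx$. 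Integrating $-\Delta w=f(w)$ by parts against either factor gives the key algebraic identity $\gamma_\lambda^R=\varepsilon_1/\lambda^2=\varepsilon_2/(1-\lambda)^2$, and hence
\[
s^{N-2}\gamma_\lambda^R-s^N(\varepsilon_1+\varepsilon_2)=s^{N-2}\gamma_\lambda^R\bigl[\,1-s^2(\lambda^2+(1-\lambda)^2)\,\bigr].
\]

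Next I bound the two free energies. Since $w_0(\cdot/s)$ is a translate of $w(\cdot/(s\lambda))$ and $w\in\mathcal{P}_0$, one computes $I_0(w_0(\cdot/s))=p_0\,\Phi(s\lambda)$ with $\Phi(t):=\tfrac{N}{2}t^{N-2}-\tfrac{N-2}{2}t^N$, and similarly for $w_y$. Optimizing $\Phi(s\lambda)+\Phi(s(1-\lambda))$ in $s$ yields maximum $\phi(\lambda):=(C_\lambda^{N-2})^{N/2}/(C_\lambda^N)^{(N-2)/2}$ attained at $s_*^2=C_\lambda^{N-2}/C_\lambda^N$. H\"older's inequality with conjugate exponents $N/(N-2)$ and $N/2$ applied to $C_\lambda^{N-2}=\lambda^{N-2}\cdot 1+(1-\lambda)^{N-2}\cdot 1$ gives $(C_\lambda^{N-2})^{N/(N-2)}\leq 2^{2/(N-2)}C_\lambda^N$, hence $\phi(\lambda)\leq 2$, with equality only at $\lambda=1/2$ and the quantitative local gap $\phi(\lambda)\leq 2-\kappa(\lambda-1/2)^2$ near $1/2$ for some $\kappa>0$. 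A direct computation gives
\[
s_*^2\bigl(\lambda^2+(1-\lambda)^2\bigr)=1+\frac{\lambda^{N-2}(1-\lambda)^2+(1-\lambda)^{N-2}\lambda^2}{\lambda^N+(1-\lambda)^N}>1
\]
for $\lambda\in(0,1)$, so at $s\approx s_*$ the interaction correction is strictly negative and, by Lemma \ref{lema5.4}, bounded above by $-c(\lambda)R^{-(N-2)}$ with $c(\lambda)>0$ uniform on compact subsets of $(0,1)$.

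Finally I split cases. Fix $\delta_0\in(0,1/4)$. On the middle set $\lambda\in[\delta_0,1-\delta_0]$ both estimates are uniform, and absorbing the $o(R^{-(N-2)})$ remainders yields $I_V(U_{y,\lambda}^R(\cdot/S_{y,\lambda}^R))\leq 2p_0-C_1R^{-(N-2)}$ for large $R$ with $C_1>0$ independent of $y,\lambda$. On $[0,\delta_0]\cup[1-\delta_0,1]$, Lemma \ref{lema.extrem} provides $I_V\leq p_0+\delta'$ at $\lambda\in\{0,1\}$ for $R$ large and any prescribed $\delta'>0$; continuity of the map on the compact set $\partial B_2(y_0)\times[0,1]$ (via Lemma \ref{unic.s}) then propagates this to a neighborhood of $\{\lambda=0,1\}$, so choosing $\delta_0$ smaller if necessary, together with $\delta'<p_0/4$, gives $I_V\leq 3p_0/2<2p_0$ there. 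Setting $\eta(R):=\min\{p_0/2,\,C_1R^{-(N-2)}\}$ finishes the proof. The main obstacle I anticipate is ensuring that the $o(R^{-(N-2)})$ remainders are uniform in $\lambda$ on $[\delta_0,1-\delta_0]$ (not merely pointwise), which requires tracking constants in the interaction lemmas to confirm they depend only on $\delta_0$, and checking that the H\"older margin for $\phi$ matches the case boundary.
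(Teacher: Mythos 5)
Your proposal takes a genuinely different route from the paper's, and it is worth comparing. The paper anchors its interaction estimate at the single value $\lambda=1/2$, where Lemma~\ref{lambda=1/2} supplies $S_{y,1/2}^R\to 2$ and the identity $\int\nabla w_{0,1/2}\cdot\nabla w_{y,1/2}=4\int f(w_{0,1/2})w_{y,1/2}$, then spreads this to a small interval $[1/2-\delta,1/2+\delta]$; outside that interval it falls back on the strict maximality of $t\mapsto\Phi(t)$ at $t=1$ to get a fixed gap $2p_0-2\bar\eta$. You instead extract the general identity $\gamma_\lambda^R=\varepsilon_1/\lambda^2=\varepsilon_2/(1-\lambda)^2$ (correct, and a nice unification of the paper's $\lambda=1/2$ case), rewrite the interaction correction as $s^{N-2}\gamma_\lambda^R[1-s^2(\lambda^2+(1-\lambda)^2)]$, and bound the free energies via a global optimization in $s$ combined with a clean H\"older argument giving $\phi(\lambda)\le 2$. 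All of your algebra is correct (the formula $s_*^2(\lambda^2+(1-\lambda)^2)=1+\frac{\lambda^{N-2}(1-\lambda)^2+(1-\lambda)^{N-2}\lambda^2}{C_\lambda^N}$ and the H\"older bound both check out), and the approach is arguably more conceptual than the paper's.

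There is, however, a genuine gap in the middle-range case. Your bound $I_0(w_0(\cdot/s))+I_0(w_y(\cdot/s))\le p_0\phi(\lambda)\le 2p_0$ holds for every $s$, but the negativity of the interaction correction holds only when $s^2(\lambda^2+(1-\lambda)^2)>1$, i.e.\ when $s>s_0(\lambda):=(\lambda^2+(1-\lambda)^2)^{-1/2}$. You evaluate everything at the actual projection parameter $s=S_{y,\lambda}^R$, but you never prove that $S_{y,\lambda}^R>s_0(\lambda)$, writing only ``at $s\approx s_*$''. What is needed (and is true, but must be argued) is that $S_{y,\lambda}^R\to s_*(\lambda)=(C_\lambda^{N-2}/C_\lambda^N)^{1/2}$ uniformly on $[\delta_0,1-\delta_0]$; this follows from the same convergence of $\xi_V'$ used in Lemma~\ref{unic.s} together with $w\in\mathcal{P}_0$, and then $s_*^2(\lambda^2+(1-\lambda)^2)>1$ gives the required margin for $R$ large. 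Without this step, the proof breaks: if $S_{y,\lambda}^R<s_0(\lambda)$ the interaction correction is positive and the claimed inequality could fail. A similar issue affects the endpoint neighborhoods: invoking ``continuity'' of $(y,\lambda,R)\mapsto I_V(U_{y,\lambda}^R(\cdot/S_{y,\lambda}^R))$ does not by itself produce a $\delta_0$ that works for all large $R$ simultaneously; you would need to show that the estimate in Lemma~\ref{lema.extrem} extends to $\lambda\in[0,\delta_0]$ with bounds uniform in $R$, which the paper accomplishes instead by noting $\Phi(\lambda s)+\Phi((1-\lambda)s)\le 2-2\bar\eta/p_0$ on an $R$-independent set using the boundedness of $S_{y,\lambda}^R$ and strict monotonicity of $\Phi$.
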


\begin{proof}
	If $\lambda = 0$ or $\lambda = 1$, it follows by Lemma \ref{lema.extrem} that, for all $\delta > 0$, there exists $R_1 \geq 1$ such that $$I_V\!\left(\!U_{y,\lambda}^R\!\left(\frac{\cdot}{s}\right)\!\right) < p_0 + \delta,$$ for all $y \in \partial B_2(y_0)$ and $R \geq R_3$. Suppose that $\lambda \in (0,1)$. By Lemma \ref{unic.s}, there exist $R_0 > 0$ and $S_0 > 2$ such that $S_{y,\lambda}^{R} \in (\sigma_0,S_0)$ for all $y \in \partial B_2(y_0)$, $\lambda \in [0,1]$ and $R \geq R_0$,
	 changing the variables $sz=x$ and, for simplicity, denoting $w_0:=w_{0,\lambda}^{R}$ and $w_y:=w_{y,1-\lambda}^{R}$, we have
		\begin{eqnarray*}
		I_V\!\left(\!U_{y,\lambda}^R\!\left(\frac{\cdot}{s}\!\right)\right) 
		&=& \frac{s^{N-2}}{2}\!\left[\int_{\mathbb{R}^N}|\nabla w_0|^2dz-2s^2\!\!\int_{\mathbb{R}^N}F(w_0)dz+\int_{\mathbb{R}^N}|\nabla w_y|^2dz-2s^2\!\!\int_{\mathbb{R}^N}F(w_y)dz\right] \\
		&& + \, \frac{s^{N}}{2}\!\int_{\mathbb{R}^N}V(sz)\!\left[w_0+w_y\right]^2\!dz + s^{N-2}\!\int_{\mathbb{R}^N}\nabla w_0 \cdot \nabla w_y\,dz \\
		&& - \,s^N\!\!\int_{\mathbb{R}^N}\left[F(w_0+w_y)-F(w_0)-F(w_y)-f(w_0)w_y-f(w_y)w_0\right]dz \\
		&& - \,s^N\!\int_{\mathbb{R}^N}\left[f(w_0)w_y + f(w_y)w_0\right]dz \\
(I)	&\leq&
		 I_0(w(\frac{\cdot}{\lambda s})) + I_0(w(\frac{\cdot}{(1-\lambda)s})) \\
	(II)	&& + \, \frac{s^{N}}{2}\!\int_{\mathbb{R}^N}|V(sz)|\!\left[w_0+w_y\right]^2\!dz \\
	(III)	&& - \,s^N\!\!\int_{\mathbb{R}^N}\left[F(w_0+w_y)-F(w_0)-F(w_y)-f(w_0)w_y-f(w_y)w_0\right]dz \\
	(IV)	&& + \,s^{N-2}\!\!\int_{\mathbb{R}^N}\left[\nabla w_0 \cdot \nabla w_y - s^2f(w_0)w_y - s^2f(w_y)w_0\right]\!dz.  
	\end{eqnarray*}
	
	Since $p_0=I_0(w)=\max_{t>0}I_0(w(\frac{\cdot}{t}))$, then
	\begin{eqnarray*}
		(I) 
		&\leq& I_0(w) + I_0(w) = 2p_0.
	\end{eqnarray*}
	By Lemma \ref{lema11} (a), we obtain 
	\begin{eqnarray*}
		(II) \leq CR^{-\tau},
	\end{eqnarray*}
	where $\tau > N-2$ and hence, $(II) \leq o(\varepsilon_{\lambda}^{R})$ for all $N \geq 3$.
	
	Moreover, corollary \ref{F-ACP-estimate}  and $s \leq S_0$ yield
	\[
	(III) = o(\varepsilon_{\lambda}^{R}) 
	\] 
	for all $N \geq 3$.
	
	Now observe that for $\lambda= 1/2$ fixed, using that  $w$ is a solution of (\ref{P_0}), we obtain 
	$$\int_{\mathbb{R}^N}\nabla w^R_{0,1/2}\nabla w^R_{y,1/2}=4\int_{\mathbb{R}^N}f(w^R_{0, 1/2})w^R_{y, 1/2}.$$
	
	By Lemma \ref{lambda=1/2}, we have
	\begin{eqnarray*}
		\lim_{(\lambda, R) \to (1/2, +\infty)}
		s^2\!\int_{\mathbb{R}^N}\left[\frac{f(w_0)w_y+f(w_y)w_0}{2}\right]\!dz
		 &=&
		 4\!\int_{\mathbb{R}^N}\left[\frac{f(w_{0,1/2})w_{y,1/2}+f(w_{y,1/2})w_{0,1/2}}{2}\right]\!dz \\
		 &=& \int_{\mathbb{R}^N}\nabla w_{0,1/2} \cdot \nabla w_{y,1/2}\,dz. 
	\end{eqnarray*}
	Then, taking $R_5$ sufficiently large and $\delta \in (0,1/4)$ sufficiently small, we obtain
	\begin{eqnarray*}
		\frac{4s^2}{3}\!\int_{\mathbb{R}^N}\left[\frac{f(w_0)w_y+f(w_y)w_0}{2}\right]dz \geq 
	\left|\int_{\mathbb{R}^N}\nabla w_0 \cdot \nabla w_y\,dz \right|, 
	\end{eqnarray*}
	for all $\lambda \in [1/2-\delta,1/2+\delta]$, $y \in \partial B_2(y_0)$ and $R \geq R_5$. Thus, there exists $C_0 > 0$ such that
	\begin{eqnarray*}
		(IV) &=& s^{N-2}\!\int_{\mathbb{R}^N}\left[\nabla w_0 \cdot \nabla w_y - s^2f(w_0)w_y - s^2f(w_y)w_0\right]\!dz \\
		&\leq& -\,\frac{s^{N}}{3}\!\int_{\mathbb{R}^N}\left[f(w_0)w_y+f(w_y)w_0\right]dz = -\,\frac{2s^N\varepsilon_{\lambda}^{R}}{3} \leq -C_0\varepsilon_{\lambda}^{R}.
	\end{eqnarray*}
	Furthermore, it follows from \eqref{ACP-estimate} that
		\begin{eqnarray*}
			(III) + (IV) &\leq & o(\varepsilon_{\lambda}^R) -C_0 \varepsilon_{\lambda}^R. \end{eqnarray*}
All together, for $N \geq 3$, it holds
	\begin{equation}\label{5.13}
	I_V\!\left(U_{y,\lambda}^R\!\left(\frac{\cdot}{s}\right)\!\right) \leq 2p_0 - C_0\varepsilon_{\lambda}^{R} + o\!\left(\varepsilon_{\lambda}^{R}\right),
	\end{equation}
	for all $y \in \partial B_2(y_0)$, $\lambda \in [1/2-\delta,1/2+\delta]$ and $R \geq R_5$ sufficiently large. Most important is that $\lambda$ is in a closed sub-interval of $(0,1)$, so the bounds on $\varepsilon_{\lambda}^{R}$ are uniform in $\lambda$, which yields $\varepsilon_{\lambda}^{R} =O(R^{-(N-2)})$.

	On the other hand, for every $\lambda \in (0,1/2-\delta] \cup [1/2+\delta,1)$, $y \in \partial B_2(y_0)$ and $R \geq 1$ sufficiently large, if $s:=S_{y,\lambda}^{R} \leq 2$, then $\lambda s \in (0,1-2\delta]$ or $(1-\lambda)s \in (0,1-2\delta]$ and, if $s:=S_{y,\lambda}^{R} \geq 2$, then $\lambda s \in [1+2\delta,+\infty)$ or $(1-\lambda)s \in [1+2\delta,+\infty)$. In any case, either, $\lambda s \in (0,1-2\delta] \cup [1+2\delta,+\infty)$ or $(1-\lambda)s \in (0,1-2\delta] \cup [1+2\delta,+\infty)$. 
	
	Therefore, recalling that the map $t \mapsto I_{0}(w (\frac{\cdot}{t}))$ is strictly increasing in $(0,1]$ and strictly decreasing in $[1,\infty)$ and $I_0(w)=p_0$, there exist $\overline{\eta} \in (0,p_0)$ and $R_6$ sufficiently large, such that
	\begin{eqnarray*}
		(I) &=& I_{0}(w(\frac{\cdot}{\lambda s})) + I_{0}(w(\frac{\cdot}{(1-\lambda)s})) 
		\leq 2p_0 - 2\overline{\eta},
	\end{eqnarray*}
	for all $y \in \partial B_2(y_0)$, $\lambda \in (0, 1/2-\delta]\cup [1/2+\delta,1)$ and $R \geq R_6$. Hence, the previous estimates, imply that 
	\begin{eqnarray}\label{5.14}
	I_V\!\left(U_{y,\lambda}^R\!\left(\frac{\cdot}{s}\right)\!\right) \leq 2p_0 - 2\overline{\eta} + O(\varepsilon_{\lambda}^{R})\end{eqnarray}
	for all $y \in \partial B_2(y_0)$, $\lambda \in (0,1/2-\delta]\cup[1/2+\delta,1)$ and $R \geq R_6$.
	
	By Lemmas \ref{unic.s} and \ref{lema.extrem},  inequalities (\ref{5.13}) and (\ref{5.14}), taking $R_4:=\max\{R_0,R_3, R_5, R_6\}$, we get a number $\eta=\eta(R)> 0$ such that $$I_V\!\left(\!U_{y,\lambda}^R\!\left(\frac{\cdot}{s}\right)\!\right) \leq 2p_0-\eta,$$ for all $y \in \partial B_2(y_0)$, $\lambda \in [0,1]$ and $R \geq R_4$.
	
\end{proof}

For $c \in \mathbb{R}$, let us define $I_V^c:=\left\{u \in \mathcal{D}^{1,2}(\mathbb{R}^N): I_V(u) \leq c\right\}.$

Next we define a barycenter map that will be used in proving the existence of a solution of problem (\ref{1.1}).
Let $\beta: L^{2^*}(\mathbb{R}^N) \to \mathbb{R}^N$ be a barycenter function, i. e., a continuous map which satisfies $\beta(u(\cdot - y))= \beta(u) + y$ and $\beta (u \circ\theta 
^{-1})= \theta(\beta(u))$ for all $u \in L^{2^*}(\mathbb{R}^N)\setminus \{0\}$ and $y \in \mathbb{R}^N$, and every linear isometry $\theta$ of $\mathbb{R}^N$.
Note that $\beta (u)=0$ if $u$ is radial and $\beta(u(\cdot/s))=\beta(u)$ for $s>0$.

Now let us define 
\begin{equation}\label{def.b}
b:=\inf\!\left\{I_V(u): u \in \mathcal{P}_V, \beta(u)=0\right\}.
\end{equation}
Clearly, $b \geq p_V$.

\begin{lemma}\label{lema5.13}
	Assume {\bf ($V_1$)--($V_5$)} and {\bf ($f_1$)--($f_3$)} hold true. If $p_V$ is not attained by $I_V$ on $\mathcal{P}_V$, then $b > p_V$.
\end{lemma}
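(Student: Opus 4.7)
The plan is to argue by contradiction, assuming $b = p_V$, and to produce a minimizing sequence which, after invoking the splitting Lemma \ref{lema4.15}, either yields attainment of $p_V$ (contradicting the hypothesis) or yields a single soliton bump translated to infinity, at which point the equivariance of the barycenter map $\beta$ forces a contradiction with $\beta(u_n)=0$.

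First, I would pick a minimizing sequence $(u_n) \subset \mathcal{P}_V$ with $\beta(u_n)=0$ and $I_V(u_n) \to b = p_V$. By Ekeland's variational principle applied to $I_V$ restricted to the $C^2$-submanifold $\mathcal{P}_V$ (see Proposition \ref{lema3.1}(c)), I may assume $(u_n)$ is a (PS)$_{p_V}$-sequence for $I_V|_{\mathcal{P}_V}$. By Lemma \ref{lema3.2} it is bounded in $\mathcal{D}^{1,2}(\mathbb{R}^N)$, and by Lemma \ref{lema4.13} it is actually a free (PS)$_{p_V}$-sequence for $I_V$. Thus Lemma \ref{lema4.15} applies and, after passing to a subsequence, yields a solution $\bar u$ of \eqref{1.1}, an integer $r \geq 0$, nontrivial solutions $w^1,\dots,w^r$ of \eqref{P_0} and sequences $(y_n^j)$ with $|y_n^j|\to\infty$, such that $u_n - \sum_{j=1}^r w^j(\cdot - y_n^j) \to \bar u$ in $\mathcal{D}^{1,2}(\mathbb{R}^N)$ and $p_V = I_V(\bar u)+\sum_{j=1}^r I_0(w^j)$.

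Next I would dispatch the case $\bar u \neq 0$. Being a nontrivial solution of \eqref{1.1}, $\bar u$ satisfies the Pohozaev identity \eqref{2.5}, so $\bar u \in \mathcal{P}_V$ and $I_V(\bar u) \geq p_V$. Since each $I_0(w^j)\geq m = p_0 > 0$ (by Lemma \ref{lema4.4} and the identity $m=p_0$), the decomposition forces $r=0$ and $I_V(\bar u)=p_V$, so $\bar u$ attains $p_V$ on $\mathcal{P}_V$, contradicting the hypothesis. Thus $\bar u = 0$ and $p_V = \sum_{j=1}^r I_0(w^j)$. Combining with Lemma \ref{lema4.12} ($p_V \leq p_0$) and $I_0(w^j) \geq p_0$, necessarily $r=1$ and $I_0(w^1)=p_V=p_0$, so $w^1$ is a (possibly translated) ground state of the limit problem. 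Writing $y_n := y_n^1$, we then have $u_n - w^1(\cdot - y_n) \to 0$ in $\mathcal{D}^{1,2}(\mathbb{R}^N)$ with $|y_n|\to\infty$.

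The closing step uses the barycenter. Up to absorbing $\beta(w^1)$ into the translation $y_n$, I may assume $\beta(w^1)=0$. Define $\tilde u_n(x):=u_n(x+y_n)$; the equivariance $\beta(u(\cdot - y))=\beta(u)+y$ gives
\begin{equation*}
\beta(\tilde u_n) = \beta(u_n) - y_n = -y_n .
\end{equation*}
On the other hand, $\tilde u_n \to w^1$ in $\mathcal{D}^{1,2}(\mathbb{R}^N)$ and hence in $L^{2^*}(\mathbb{R}^N)$, so by continuity of $\beta$ we get $\beta(\tilde u_n) \to \beta(w^1)=0$. Therefore $y_n \to 0$, contradicting $|y_n|\to\infty$. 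The subtle point, and the main obstacle, is case 2: one must rule out a single escaping bump, and this is exactly what the barycenter constraint $\beta(u_n)=0$ buys us via the equivariance identity. Everything else is routine once Lemmas \ref{lema4.13} and \ref{lema4.15} are in hand.
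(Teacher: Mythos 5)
Your proof is correct and follows essentially the same route as the paper's: contradiction from $b=p_V$, Ekeland to upgrade a minimizing sequence to a constrained Palais--Smale sequence, Lemmas~\ref{lema3.2}, \ref{lema4.13} and \ref{lema4.15} to split, the non-attainment hypothesis to rule out $\bar u\neq 0$ and force a single escaping bump, and finally equivariance of $\beta$ to reach the contradiction. One small point of precision worth keeping in mind: Ekeland's variational principle does not let you \emph{assume} the original minimizing sequence $(v_n)$ with $\beta(v_n)=0$ is a Palais--Smale sequence; it produces a \emph{new} sequence $(u_n)\subset\mathcal{P}_V$ with $\|u_n-v_n\|_V\to 0$, so in general $\beta(u_n)\neq 0$, only $\beta(u_n)=o_n(1)$ by continuity of $\beta$. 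This is harmless because the final identity $\beta(\tilde u_n)=\beta(u_n)-y_n$ then reads $-y_n+o_n(1)$, and the contradiction $|y_n|\to\infty$ vs.\ $\beta(\tilde u_n)\to 0$ survives; the paper's own wording carries the same slight imprecision. Your explicit case split $\bar u\neq 0$ vs.\ $\bar u=0$ and your handling of the translation that normalizes $\beta(w^1)=0$ supply detail the paper compresses, but the substance is identical.
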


\begin{proof}
	Suppose, by contradiction, that $b = p_V$. Then, by definition, there exists a sequence $(v_n) \subset \mathcal{P}_V$, with $\beta(v_n)=0$, such that $I_V(v_n) \to b$. By Lemma \ref{lema3.2}, we have $(v_n)$ is bounded in $\mathcal{D}^{1,2}\!(\mathbb{R}^N)$. Using Ekeland's Variational Principle, we obtain a sequence $(u_n) \subset \mathcal{P}_V$ such that $I_V(u_n) \to p_V$ and $I_V|_{\mathcal{P}_V}'(u_n) \to 0$, with $\Vert u_n - v_n \Vert_V \to 0$, see \cite[Theorem 8.5]{Willem}. So by Lemma \ref{lema4.13}, we have $I'_V(u_n) \to 0$ in $(\mathcal{D}^{1,2}\!(\mathbb{R}^N))^{\prime}$. Since $(v_n)$ is bounded, it follows that $(u_n)$ is bounded in $\mathcal{D}^{1,2}\!(\mathbb{R}^N)$. Thus, if $p_V$ is not attained by $I_V$ on $\mathcal{P}_V$, it follows from Lemma \ref{lema4.15} that $u_n=w(\cdot - y_n) + o_n(1)$, where $o_n(1) \to 0$ as $n \to \infty$ and $(y_n) \subset \mathbb{R}^N$, $|y_n| \to +\infty$ and $w$ is the radial solution of problem (\ref{P_0}). Doing a translation, we get $u_n(x+y_n)=w(x)+o_n(1)$. Using the barycenter function, we obtain $\beta(u_n(x+y_n))=\beta(u_n)-y_n=-y_n$ and $\beta(w(x)+o_n(1))=\beta(w(x))+o_n(1)$, by the continuity. Since $w$ is radial, it follows that $\beta(w(x)) = 0$ and so $-y_n = o_n(1)$, which is a contradiction. Therefore, $b > p_V$. 
\end{proof}

\begin{lemma}\label{lema5.14}
	Assume that {\bf ($V_1$)--($V_5$)} and {\bf ($f_1$)--($f_3$)} hold true. If $p_V$ is not attained by $I_V$ on $\mathcal{P}_V$, then $p_V = p_0$ and there exists $\delta > 0$ such that $$\beta(u) \neq 0, \quad \forall \, u \in \mathcal{P}_V \cap I_V^{p_0+\delta}.$$
\end{lemma}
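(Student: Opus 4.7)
My plan has two clean steps, both leveraging prior results from the excerpt.

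\textbf{Step 1: $p_V = p_0$.} One inequality is immediate: Lemma \ref{lema4.12} already gives $p_V \leq p_0$. For the reverse inequality, I will invoke Lemma \ref{lema4.16}, whose very first conclusion is that, under the standing assumptions, if $p_V$ is not attained on $\mathcal{P}_V$ then $p_V \geq p_0$. Combining the two yields $p_V = p_0$ and no further work is needed here.

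\textbf{Step 2: the sub-level set estimate via the barycenter constraint.} The key observation is that the quantity $b$ defined in \eqref{def.b} strictly exceeds $p_V$ under our hypothesis. Indeed, Lemma \ref{lema5.13} (whose proof used an Ekeland--type argument together with the Brezis--Lieb splitting of Lemma \ref{lema4.15} and the uniqueness of the limit profile $w$) gives $b > p_V$. Using Step 1 this reads $b > p_0$.

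\textbf{Step 3: choosing $\delta$ and concluding.} Set $\delta := (b - p_0)/2 > 0$, so that $p_0 + \delta < b$. Suppose, for contradiction, that some $u \in \mathcal{P}_V \cap I_V^{p_0+\delta}$ satisfies $\beta(u) = 0$. Then $u$ is admissible in the infimum defining $b$, so
\[
b \;\leq\; I_V(u) \;\leq\; p_0 + \delta \;<\; b,
\]
a contradiction. Hence $\beta(u) \neq 0$ for every $u \in \mathcal{P}_V \cap I_V^{p_0+\delta}$, as required.

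I do not anticipate any real obstacle here, since the machinery is already assembled in the preceding lemmas; the proof is essentially a bookkeeping exercise that packages Lemmas \ref{lema4.12}, \ref{lema4.16} and \ref{lema5.13} together. The only subtlety worth flagging is that Step 2 genuinely requires the standing assumption that $p_V$ is not attained, because without this hypothesis Lemma \ref{lema5.13} need not give a strict inequality $b > p_V$ (e.g.\ a minimizer for $p_V$ could, after translation, be radialized to have $\beta = 0$).
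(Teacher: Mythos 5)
Your proof is correct and relies on the same three ingredients as the paper's proof (Lemmas \ref{lema4.12}, \ref{lema4.16}, \ref{lema5.13}); the only difference is cosmetic, in that you exhibit $\delta := (b-p_0)/2$ directly, whereas the paper runs a sequential contradiction argument ($v_n \in \mathcal{P}_V$, $I_V(v_n)\le p_0 + 1/n$, $\beta(v_n)=0$) to reach the same conclusion $b\le p_0 = p_V$, contradicting Lemma \ref{lema5.13}.
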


\begin{proof}
	By Lemma \ref{lema4.12}, we have $p_V \leq p_0$. On the other hand, it follows from Lemma \ref{lema4.16} that, if $p_V$ is not attained by $I_V$ on $\mathcal{P}_V$, then $p_V \geq p_0$ and so $p_V = p_0$. Now let us show that there exists $\delta > 0$ such that $$\beta(u) \neq 0, \quad \forall \, u \in \mathcal{P}_V \cap I_V^{p_0+\delta}.$$ Suppose, by contradiction, that for all $n \in \mathbb{N}$ there exists $v_n \in \mathcal{P}_V$ such that $I_V(v_n) \leq p_0 + 1/n$ and $\beta(v_n) = 0$. Thus, we have $b \leq I_V(v_n) \leq p_0 + 1/n$, for all $n \in \mathbb{N}$. So as $n \to \infty$, it follows that $b \leq p_0 = p_V$, contradicting Lemma \ref{lema5.13}. Therefore, the result follows. 
\end{proof}

{\bf Proof of Theorem \ref{teo1.1}.} If $p_V$ is attained by $I_V$ at some $u \in \mathcal{P}_V$ then, by Corollary \ref{cor4.14}, $u$ is a nontrivial solution of problem (\ref{1.1}). So assume that $p_V$ is not attained. Then, using Lemmas \ref{lema4.12} and \ref{lema4.16}, it follows that $p_V = p_0$. We will show that $I_V$ has a critical value in $(p_0,2p_0)$. 

Lemma \ref{lema5.14} allows us to choose $\delta \in (0,p_0/4)$ such that $\beta(u) \neq 0, \forall \, u \in \mathcal{P}_V \cap I_V^{p_0+\delta}$ and, by Lemma \ref{lema.extrem} and Proposition \ref{prop.princ}, we may choose $R \geq 1$ and $\eta  \in (0,p_0/4)$ such that $$I_V\!\left(\!U_{y,\lambda}^R\!\left(\frac{\cdot}{s}\right)\!\right) \leq \left\{\begin{array}{ll}
p_0 + \delta, & \ \ \textrm{for} \ \lambda = 0 \ \textrm{and all} \ y \in \partial B_2(y_0), \\
2p_0 - \eta, & \ \ \textrm{for all} \ \lambda \in [0,1] \ \textrm{and all} \ y \in \partial B_2(y_0), \\
\end{array}\right.$$ where $s:=S_{y,\lambda}^{R}>0$ is such that $U_{y,\lambda}^R\!\left(\frac{\cdot}{s}\right) \in \mathcal{P}_V$. Define $\zeta: B_2(y_0) \to I_V^{2p_0-\eta}$ by $$\zeta(\lambda y_0 + (1-\lambda)y):=U_{y,\lambda}^R\!\left(\frac{\cdot}{s}\right), \quad \textrm{with} \, \lambda \in [0,1], \, y \in \partial B_2(y_0).$$
 Arguing by contradiction, assume that $I_V$ does not have a critical value in $(p_0,2p_0)$. Then, there exists $\varepsilon > 0$ such that $\Vert I_V'(u) \Vert_{\left(\mathcal{D}^{1,2}(\mathbb{R}^N)\right)'} \geq \varepsilon,  \forall \, u \in I_V^{-1}\!\left([p_0+\delta,2p_0-\eta]\right).$ Otherwise there would be $d \in (p_0,2p_0)$ and a sequence $(u_n) \in \mathcal{P}_V$ such that $I_V(u_n) \to d$, $I_V|_{\mathcal{P}_V}'(u_n) \to 0$ and, so Corollaries \ref{cor4.17} and \ref{cor4.14}, would lead to a contradiction. Then, there exists a continuous function $\pi:\mathcal{P}_V \cap I_V^{2p_0-\eta} \to \mathcal{P}_V \cap I_V^{p_0+\delta}$ such that $\pi(u) = u$ for all $u \in \mathcal{P}_V \cap I_V^{p_0+\delta}$, see \cite[Lemma 5.15]{Willem}. Note that the function $h:B_2(y_0) \to \partial B_2(y_0)$, given by $$h(x):=2\left(\frac{(\beta \circ \pi \circ \zeta)(x) - Ry_0}{|(\beta \circ \pi \circ \zeta)(x) - Ry_0|}\right) + y_0,$$ is well defined and continuous. Moreover, if $y \in \partial B_2(y_0)$, then $\zeta(y) = U_{y,0}^R\!\left(\frac{\cdot}{s}\right) \in I_V^{p_0+\delta}$, with $(\beta \circ \pi \circ \zeta)(y) = \beta\!\left(U_{y,0}^R\!\left(\frac{\cdot}{s}\right)\!\right) = \beta\!\left(w\!\left(\frac{\cdot}{s}-Ry\right)\!\right) = \beta\!\left(w\!\left(\frac{\cdot}{s}\right)\!\right) + Ry = Ry$ and, hence, $h(y)=y$ for every $y \in \partial B_2(y_0)$. So we get the following restriction map $\tilde{h}:=h|_{\partial B_2(y_0)}:\partial B_2(y_0) \to \partial B_2(y_0)$, given by $\tilde{h}(y)=y$. But the existence of such a contract $h$ contradicts Brouwer's Fixed Point Theorem. Therefore, $I_V$ must have a critical point $u \in \mathcal{P}_V$, with $I_V(u) \in (p_0,2p_0)$. This proves that problem (\ref{1.1}) has a nontrivial solution $u \in \mathcal{D}^{1,2}(\mathbb{R}^N)$. Using the maximum principle we can conclude that $u$ is positive and the proof of the theorem is complete. 
\begin{flushright}
	$\Box$
\end{flushright}


\end{document}